\pgfplotsset{compat = newest}
\DeclareMathOperator{\supp}{supp} 
\newcommand{\D}{\text{D}}
\newcommand{\Z}{\mathbb Z}
\newcommand{\dd}{\mathrm{d}}
\newcommand{\R}{\mathbb R}
\newcommand{\Rp}{\mathbb R_{> 0}}
\newcommand{\Rpz}{\mathbb R_{\geq 0}}
\newcommand{\alphahat}{\Hat{\alpha}}
\newcommand{\bhat}{\Hat{b}}
\newcommand{\sigmahat}{\Hat{\sigma}}
\newcommand{\lambdahat}{\Hat{\lambda}}
\newcommand{\What}{\Hat{W}}
\newcommand{\N}{\mathbb N}
\newcommand{\projpf}{\mathfrak{p}_*}
\definecolor{dorange}{RGB}{179, 98, 0}
\definecolor{mblue}{RGB}{0, 0, 255}
\newcommand{\erfc}{\operatorname{erfc}}
\newtheorem{maintheorem}{Theorem}
\newtheorem{lemma}{Lemma}[section]
\newtheorem{theorem}[lemma]{Theorem}
\newtheorem{proposition}[lemma]{Proposition}
\theoremstyle{definition}
\newtheorem{remark}[lemma]{Remark}
\newcommand{\RN}[1]{
  \textup{\uppercase\expandafter{\romannumeral#1}}_\epsilon
}
\newlist{steps}{enumerate}{1}
\setlist[steps, 1]{label = \textit{Step \arabic*:}}
\numberwithin{equation}{section}
\title{Positive Lyapunov Exponent in the Hopf Normal Form with Additive Noise}
\author{Dennis Chemnitz\thanks{Freie Universität Berlin - Institut für Mathematik, Arnimallee 6, 14195 Berlin}\\\href{mailto:dennis.chemnitz2@fu-berlin.de}{\small{\texttt{dennis.chemnitz2@fu-berlin.de}}}\\\small{orcid: }\href{https://orcid.org/0000-0002-3303-3533}{\small{0000-0002-3303-3533}}\and Maximilian Engel\footnotemark[1]\\
\href{mailto:maximilian.engel@fu-berlin.de}{\small{\texttt{maximilian.engel@fu-berlin.de}}}\\
\small{orcid: }\href{https://orcid.org/0000-0002-1406-8052}{\small{0000-0002-1406-8052}}}
\date{\today}
\begin{document}

\maketitle
\begin{abstract}
    %New results on Shear Induced Chaos in the Hopf Normal Form with Additive Noise
We prove the positivity of Lyapunov exponents for the normal form of a Hopf bifurcation, perturbed by additive white noise, under sufficiently strong shear strength. This completes a series of related results for simplified situations which we can exploit by studying suitable limits of the shear and noise parameters. The crucial technical ingredient for making this approach rigorous is a result on the continuity of Lyapunov exponents via Furstenberg-Khasminskii formulas.
\end{abstract}

\section{Introduction}

The understanding and detection of chaotic properties has been a central theme of dynamical systems theory over the past decades. Particular interest has been devoted to proving positive Lyapunov exponents in nonuniformly hyperbolic regimes \cite{Young_2013}, as an indicator of chaotic structures. Since such endeavours have turned out to be tremendously difficult for purely deterministic systems, as, for instance, the standard map \cite{Gorodetski}, more and more attention has been given to random systems where noise can help to render  chaotic features visible \cite{BlumenthalXueYoung17, BlumenthalXueYoung18, Young08}.

A particular mechanism for creating chaotic attractors under, potentially random, perturbations has been suggested and studied by Wang, Young and co-workers and has become known as shear-induced chaos \cite {linyoung, LuWangYoung, wang-young}. The main idea is to perturb limit cycles in the radial direction, where the amplitude of the perturbation depends on the angular coordinates along the limit cycle, such that a shear force in the form of radius-dependent angular velocity can lead to a stretch-and-fold mechanism in combination with overall volume contraction.
In the situation of random perturbations, this has contributed to a particular view on stochastic Hopf bifurcation, complementary to previous studies \cite{Arnoldetal96, Baxendale94}.

In more detail, the following model of a Hopf normal form with additive white noise has been studied in \cite{deville, DoanEngeletal}, also drawing attention from applications to~e.g.~laser dynamics \cite{Wieczorek}:
\begin{equation}\label{sde:intro}
    \dd \begin{pmatrix}Z_1(t)\\Z_2(t)\end{pmatrix} = \left[\begin{pmatrix}\alpha & -\beta \\ \beta &\alpha\end{pmatrix}  - \|Z(t)\|^2\begin{pmatrix}a & -b \\ b & a\end{pmatrix}\right] \begin{pmatrix}Z_1(t)\\Z_2(t)\end{pmatrix}\dd t + \sigma \dd \begin{pmatrix}W_1(t) \\ W_2(t)\end{pmatrix},
\end{equation}
where  $\sigma \geq 0$ is the strength of the noise, $\alpha \in \R$ is a parameter equal to the real
part of eigenvalues of the linearization of the vector field at (0, 0), $b \in \R$ represents shear
strength, $a > 0$, $\beta \in \R$, and $(W(t))_{t \in \Rpz}$ is a 2-dimensional Brownian motion.
We will focus on the case $\alpha > 0$, such that the system without noise ($\sigma =0$) possesses a limit cycle with radius $\sqrt{\alpha a^{-1}}$.

Deville \emph{et al} \cite{deville} showed that, in the limits of small noise and small shear,
the largest Lyapunov exponent $\lambda(\alpha, \beta, a, b, \sigma)$ for system~\eqref{sde:intro} is negative.
Doan \emph{et al} \cite{DoanEngeletal} extended these stability results to parts of the global parameter
space and proved that the random attractor for the associated random dynamical system is a singleton, establishing exponentially fast synchronization of
almost all trajectories.

Based on numerical investigations, it was conjectured in several works \cite{deville, DoanEngeletal, linyoung, Wieczorek}
that large enough shear in combination with noise may cause Lyapunov exponents to turn positive, leading to chaotic random dynamical behaviour without synchronization.
Wang and Young \cite{wang-young} obtained a proof of shear-induced chaos with deterministic instantaneous periodic driving. 
%In hopes of better understanding the mechanism behind ''shear-induced chaos`` 
Lin and Young \cite{linyoung} introduced a simpler, affine linear  SDE model that still retains the important features of~\eqref{sde:intro} and exhibits favorable scaling properties of the parameter-dependent (numerically computed) Lyapunov exponents. Using a slight modifcation of the noise, Engel \emph{et al} \cite{engel-sdlc} obtained an analytical proof of positive Lyapunov exponents for this kind of simplified model in cylindrical coordinates. For that they used a Furstenberg-Khasminskii formula in terms of a function $\Psi$ (cf.~Figure~\ref{fig:muplot} and Theorem~\ref{theo:LE-sdlc} below), based on results in \cite{ImkellerLederer}.
%%%%%%%%%%%%%%%
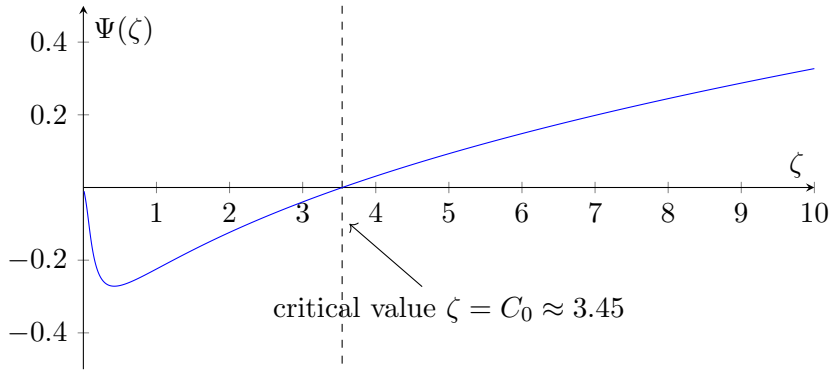
\begin{figure}[t]
    \centering
    \begin{tikzpicture}
 
\begin{axis}[
    xmin = 0, xmax = 10,
    ymin = -0.5, ymax = 0.5,
    axis x line= center,
    axis y line= center,
    xtick distance = 1,
    ytick distance = 0.2,
    width = 0.7\textwidth,
    height = 0.4\textwidth,
    xlabel = {$\zeta$},
    ylabel = {$\Psi(\zeta)$},]
\draw[-, thin, black, dashed] (3.54,0.5)--(3.54,-0.5);
 \addplot[
    thin,
    blue
] file[skip first] {muplot.dat};
\node[anchor=south](comment) at (axis cs: 5,-0.4) {critical value $\zeta = C_0 \approx 3.45$};
\draw[->] (comment)--(3.64, -0.1);
\end{axis}
 
\end{tikzpicture}
    \caption{The function $\Psi$ plotted on the interval $\zeta \in (0,10].$ The data were obtained by numerical integration.}
    \label{fig:muplot}
\end{figure}
%%%%%%%%%%%
\subsection{Main result}
%%%%%%%
The key insight of our work presented here is that the simplified cyclinder model in \cite{engel-sdlc} can be found as a large shear, small noise limit for system~\eqref{sde:intro}. This allows us to finally prove the existence of positive Lyapunov exponents by using the corresponding results in \cite{engel-sdlc} and an argument concerning the continuity of Lyapunov exponents. Hence, the main result can be expressed by associating the limit for $\lambda(\alpha, \beta, a, \epsilon^{-1} b, \epsilon \sigma)$ with the explicit Furstenberg-Khasminskii formula in terms of the function $\Psi$ (see Figure~\ref{fig:muplot}), yielding positive Lyapunov exponents for sufficiently large shear and small noise. 
%%%%%%%%%%
\begin{maintheorem}\label{theo:main}
For all $\alpha, a, \sigma \in \Rp$ and $\beta, b \in \R$, the largest Lyapunov exponent of system~\eqref{sde:intro} satisfies
\begin{equation}\label{eq:mainLimit}
    \lim_{\epsilon \to 0} \lambda(\alpha, \beta, a, \epsilon^{-1}b, \epsilon\sigma) = 2\alpha ~\Psi\left( \frac{b^2\sigma^2}{2\alpha^2a} \right).
\end{equation}
In particular, there is a constant $C_0 \approx 3.45$ such that 
$$\lambda(\alpha, \beta, a, \epsilon^{-1}b, \epsilon\sigma)>0,$$
whenever $b^2\sigma^2>2C_0\alpha^2a$
and $\epsilon>0$ is sufficiently small, depending on $\alpha, \beta, a, b$ and $\sigma$. 
%\dennis{what sufficiently small is depends on $\epsilon$, i.e. $\forall \alpha, \beta, \dots \exists \epsilon_0 \forall 0<\epsilon < \epsilon_0$ as opposed to $\exists \epsilon_0 \forall \alpha, \beta, \dots, 0<\epsilon < \epsilon_0$}
\end{maintheorem}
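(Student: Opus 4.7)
I would reduce the $\epsilon\to 0$ asymptotics of~\eqref{sde:intro} to the simplified infinite-cylinder SDE model treated in~\cite{engel-sdlc}, whose largest Lyapunov exponent is given by an explicit Furstenberg--Khasminskii integral expressible through $\Psi$ (Theorem~\ref{theo:LE-sdlc}), and then upgrade the formal identification of the limiting equation to an actual limit of Lyapunov exponents by a continuity argument. This splits into three steps: (a)~a polar change of variables together with a comoving and zoomed rescaling around the deterministic limit cycle of radius $\sqrt{\alpha/a}$; (b)~identification of the limit equation as the cylinder model with the correct dimensionless shear parameter; (c)~weak convergence of the stationary measures appearing in the Furstenberg--Khasminskii formulas, together with convergence of the integrands.

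\textbf{Steps (a) and (b): the limit SDE.} Writing $Z = (R\cos\Phi, R\sin\Phi)$ with $R = \sqrt{\alpha/a} + \epsilon r$ and passing to the comoving frame $\Phi = \bigl(\beta - \epsilon^{-1}b\,\alpha/a\bigr)\,t + \phi$, chosen to cancel the diverging rigid rotation, an It\^o expansion around the limit-cycle radius gives to leading order an Ornstein--Uhlenbeck equation for $r$ with decay rate $2\alpha$ and noise amplitude $\sigma$. The $\phi$-equation receives a shear contribution $-\epsilon^{-1} b\,(R^2-\alpha/a)\,\dd t$, whose $\epsilon^{-1}$ prefactor is cancelled by $R^2-\alpha/a = 2\sqrt{\alpha/a}\,\epsilon r + O(\epsilon^2)$, producing an order-one shear $-2b\sqrt{\alpha/a}\,r\,\dd t$ plus an $O(\epsilon)$ remainder; the tangential noise on $\phi$ is of order $\epsilon\sigma$ and vanishes in the limit. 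Rescaling time by $2\alpha$ and $r$ by $\sigma/\sqrt{2\alpha}$ turns the limit equation into precisely the cylinder model of~\cite{engel-sdlc} with dimensionless shear $b^2\sigma^2/(2\alpha^2 a)$, while the time rescaling accounts for the prefactor $2\alpha$ in~\eqref{eq:mainLimit}. Specialising Theorem~\ref{theo:LE-sdlc} to this parameter then identifies the candidate limit as the right-hand side of~\eqref{eq:mainLimit}.

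\textbf{Step (c): continuity of Lyapunov exponents.} For each $\epsilon>0$, the linearised cocycle admits a Furstenberg--Khasminskii representation $\lambda(\alpha,\beta,a,\epsilon^{-1}b,\epsilon\sigma) = \int h_\epsilon\,\dd\nu_\epsilon$, where $\nu_\epsilon$ is the unique stationary distribution of the joint base/projective process after the change of variables in step (a), and $h_\epsilon$ is the tangential expansion-rate observable. To pass to the limit I would (i)~build a Lyapunov function for the joint process, uniformly in $\epsilon$, giving uniform tightness of $(\nu_\epsilon)$ away from the singular locus $R=0$ and at large $R$; (ii)~verify convergence of the diffusion generators on a dense core of test functions; (iii)~invoke uniqueness of the stationary measure of the cylinder limit from~\cite{engel-sdlc} to identify every subsequential weak limit of $\nu_\epsilon$ with it; (iv)~combine this with locally uniform convergence of $h_\epsilon$ and a uniform integrability estimate to conclude $\int h_\epsilon\,\dd\nu_\epsilon \to \int h_0\,\dd\nu_0 = 2\alpha\Psi(b^2\sigma^2/(2\alpha^2 a))$. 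The positivity conclusion is then immediate: since $\Psi(\zeta)>0$ for $\zeta > C_0$, the condition $b^2\sigma^2 > 2 C_0 \alpha^2 a$ forces the right-hand side of~\eqref{eq:mainLimit} to be strictly positive, hence so is $\lambda(\alpha,\beta,a,\epsilon^{-1}b,\epsilon\sigma)$ for all sufficiently small~$\epsilon$.

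\textbf{Main obstacle.} The difficulty is concentrated in step (c), and is precisely the continuity-of-Lyapunov-exponents result advertised as the central technical contribution. The underlying geometry degenerates --- the annular neighbourhood of the limit cycle thins to an infinite cylinder, the projective component of the cocycle is singular at $R=0$ (which the diffusion visits with small but positive probability), and the bare angular shear diverges like $\epsilon^{-1}$ before the comoving frame is imposed. All the soft ingredients (tightness, generator convergence, uniqueness of the limiting invariant measure, uniform integrability of $h_\epsilon$) therefore have to be produced quantitatively with uniform-in-$\epsilon$ constants; once these estimates are in hand the remainder is relatively standard Feller and martingale theory.
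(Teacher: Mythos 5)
Your overall strategy matches the paper's: pass to polar coordinates, identify the cylinder model of~\cite{engel-sdlc} as the formal $\epsilon\to 0$ limit, and upgrade this to a genuine limit of Lyapunov exponents via a Furstenberg--Khasminskii representation and weak convergence of stationary measures on the projective bundle. Your rescaling in steps (a)--(b) zooms the \emph{base} process, $R=\sqrt{\alpha/a}+\epsilon r$ with a comoving angular frame, whereas the paper leaves the base process in ordinary polar coordinates and instead rescales the angular component of the \emph{variational} process, $\theta:=\epsilon\vartheta$. These differ only by an overall $\epsilon^{-1}$ factor in the anisotropy of the induced conjugacy on the tangent bundle, so they are equivalent for the Lyapunov exponent, and your parameter identification and the $2\alpha$ prefactor are consistent with Proposition~\ref{prop:fkfsdlc}.

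The genuine gap is in step (c), and it is precisely the obstacle you name but do not resolve. After the rescaling, the Furstenberg--Khasminskii observables are singular where the polar chart degenerates: in the paper's coordinates the best available bounds are $|h_2(r,\psi,\epsilon)|\lesssim 1+\epsilon^{-2}r^{-1}$ and $|h_3(r,\psi,\epsilon)|\lesssim 1+r^2+\epsilon^{-4}r^{-2}$ (Lemma~\ref{lemm:bounds}), and since $\xi_\epsilon(r)\sim r$ near $0$, these do not place $h_3$ in $L^1_{\rho_\epsilon}$ or $h_2$ in $L^2_{\rho_\epsilon}$. Consequently the decomposition your item (iv) leans on --- an ergodic time average of $h_\epsilon$ plus a martingale term killed by a Dambis--Dubins--Schwarz argument (Lemma~\ref{lemm:noiselim}) --- is simply not available for any fixed $\epsilon>0$; this is a failure of integrability, not a matter of sharpening uniform-in-$\epsilon$ constants, and ``standard Feller and martingale theory'' does not repair it. The paper's resolution, which is its central technical contribution, is to run the FK computation in \emph{two} projective coordinate systems glued by a cutoff $\chi(r)$: the rescaled $(s_\epsilon,\theta_\epsilon)$ near the limit cycle, so that the $\epsilon\to 0$ limit produces the cylinder model, and the unrescaled $(s_\epsilon,\vartheta_\epsilon)$ near $r=0$, whose drift observable $\Tilde h_1$ grows only like $\epsilon^{-1}r^2$ and whose martingale observable vanishes identically because $\Tilde A^{(2)}(r,\epsilon)\in\mathfrak{so}(2)$. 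The glued functional $\Lambda_\epsilon^*=[1-\chi(r_\epsilon)]\Tilde\Lambda_\epsilon+\chi(r_\epsilon)\Lambda_\epsilon$ still computes the Lyapunov exponent, and the switching errors are controlled by the pointwise bound $|\Lambda_\epsilon-\Tilde\Lambda_\epsilon|\le-\log\epsilon$ together with the exponential concentration of $\xi_\epsilon$ away from $\hat r$ (Lemma~\ref{lemm:concentration}). Without a device of this kind, the representation whose limit you want to take is not even finite termwise, so the weak-convergence machinery in (i)--(iii) has nothing to act on.
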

We remark that the situation of positive Lyapunov exponents allows for several conclusions concerning the nature of the random attractor $\{A(\omega)\}_{\omega \in \Omega}$ ($\Omega$ denotes the canonical Wiener space here), as established in \cite{DoanEngeletal}. 
In this reference, the authors identified $A(\omega) = \supp(\mu_{\omega})$, 
where $\mu_{\omega}$ denote the disintegrations of the invariant measure $\mu$ for the random dynamical system, corresponding with the stationary measure $\rho$ for the SDE~\eqref{sde:intro}.
Now, when $\lambda > 0$, we may deduce that $\mu_{\omega}$ is atomless almost surely by an extension of results by Baxendale \cite[Remark 4.12]{baxendale} to the non-compact setting \cite[Theorem 5.1.1]{EngelPhD}. 
Furthermore, by applying results on Pesin's formula for random dynamical systems in $\R^d$ \cite{biskamp}, one obtains positive metric entropy with respect to the invariant measure $\mu$ whenever $\lambda > 0$ (see also \cite[Corollary 5.2.10]{EngelPhD}). The fact that the disintegrations $\mu_{\omega}$ are SRB measures should follow by a similar extension of Ledrappier's and Young's work \cite{ledrappier-young} to the non-compact state space case.
\begin{figure}
    \centering
    \includegraphics[width = 0.8\textwidth]{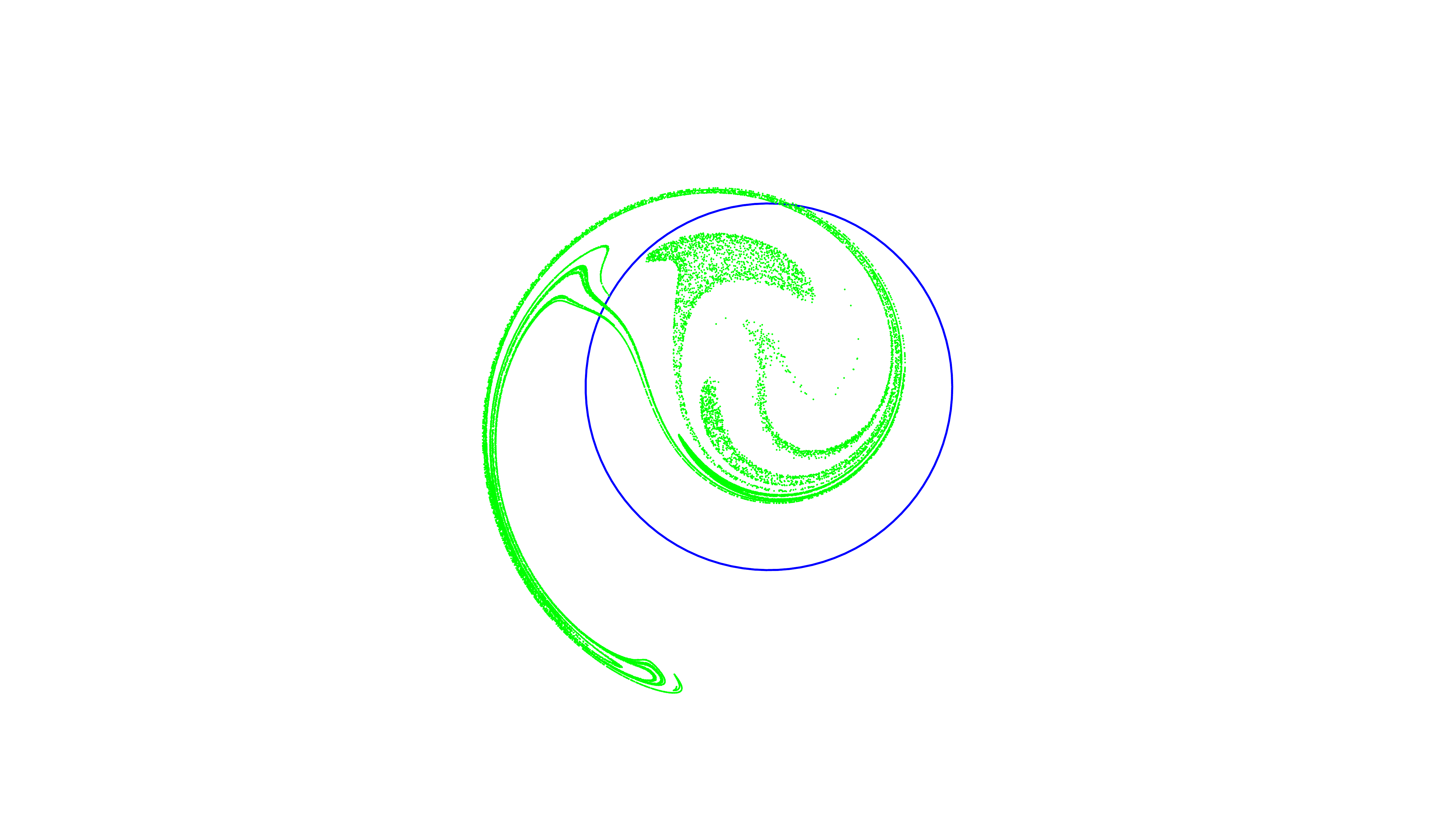}
    \caption{Chaotic random attractor for \eqref{sde:intro} with parameters $\alpha = 1$, $\beta = 1$, $a = 1$, $b = -10$ and $\sigma = 1$. The deterministic limit cycle $\left\{z \in \mathbb R^2 : \|z\| = \sqrt{\alpha a^{-1}}\right\}$ is shown in blue for reference. 
    The plot was obtained by taking 50,000 samples from the stationary distribution of \eqref{sde:intro} and evolving them numerically using an Euler-Maruyama scheme for a fixed time $T \approx 500$ with a fixed realization of the noise.
    }
    \label{fig:randattr}
\end{figure}

\subsection{Structure of the paper}
%There is a suitable generalisation of Kolmogorov-Sinai entropy for random dynamical systems, the precise definition of which will not be important in the following, see e.g. \cite[§3]{kifer} for details...
%We will denote the entropy of $\varphi$ with respect to the invariant measure $\mu^*$ by $h(\varphi,\mu^*)$ .
The proof of Theorem~\ref{theo:main} is presented in section~\ref{sec:fullproof},  based on a formal first derivation in Section~\ref{sec:largeshear}. In more detail, Section~\ref{sec:sdlc} recalls the bifurcation from synchronization to chaos, indicated by a change of sign of the largest Lyapunov exponent, in the simplified model in cylindrical coordinates studied in~\cite{engel-sdlc}. In Section~\ref{sec:formal derivation}, we provide a rigorous framework to study model~\eqref{sde:intro} as a random dynamical system, also introdcuing the corresponding Lyapunov exponents. Based on a specific geometric insight, we introduce the crucial, new  coordinates for the variational process along trajectories of~\eqref{sde:intro} and the change of variables $b= \epsilon^{-1} b'$ and $\sigma = \epsilon \sigma'$, yielding a description of the system that allows for obtaining the simplified model in a formal $\epsilon \to 0$ limit.

As a first step towards a rigorous proof,
Section~\ref{sec:auxiliary_processes} derives SDEs and coefficient estimates for auxiliary processes, parametrizing the projective bundle process, in order to provide Furstenberg-Khasminskii formulas that make the RDS and its linearization associated to~\eqref{sde:intro}, 
indexed by $\epsilon$, 
comparable to the simplified SDE model.
In Section~\ref{sec:stationarymeas}, we establish the unique stationary measures for the respective bundle processes along with concentration bounds and a weak convergence result as $\epsilon \to 0$. 
After showing in Section~\ref{sec:fk_formula} the Furstenberg-Khasminkii formula for the limiting process, obtained in the simplified model, we collect the previous description of the bundle processes and their invariant measures to prove, in Section~\ref{sec:continuity_LEs}, the limit of the largest Lyapunov exponent as stated in Theorem~\ref{theo:main}.
Thereby, we have to use two different coordinate systems for the variational process, one allowing for an arbitrarily close approximation of the deterministic limit cycle and one controlling the polar coordinate singularity at radius $r=0$.

\subsection{Additional relations to other work and outlook}
Lately, Breden and Engel \cite{BredenEngel} proved the existence of positive \emph{conditioned Lyapunov exponents}, as introduced in~\cite{EngelLambRasmussenTransAMS}, 
for model~\eqref{sde:intro} restricted to a bounded domain around the deterministic limit cycle. 
The setting considers limits of trajectories, conditioned on not having hit the boundary of the domain. The proof involves computer-assistance, using interval arithmetic, for obtaining an approximation of the relevant quasi-ergodic measure in a modified Furstenberg-Khasminskii formula, together with a rigorous error estimate.
Naturally, the proof procedure is always based on choosing particular values for the parameters, i.e.~in the end one can show the result only for a finite number of parameter combinations. 

In recent years, Bedrossian, Blumenthal and Punshon-Smith have embarked on a program to make random dynamical systems theory fruitful for relating chaotic stochastic dynamics with positive Lyapunov exponents to turbulent fluid flow, in particular in the form of Lagrangian chaos and passive scalar turbulence \cite{BedrossianetalBatchelor, BedrossianetalLangrange}. On a related note, the same authors have developed a new method for obtaining lower bounds for positive Lyapunov exponents, using an identity resembling Fisher information and and an adapted hypoellitpic regularity theory \cite{Bedrossianetal2022}. They have applied this method to \emph{Euler-like} systems, including a stochastically forced version of Lorenz 96, where an energy and volume conserving system is weakly perturbed, in the sense of small scaling, by linear damping and noise. Note that our situation also considers small noise perturbations but, being a dissipative system, strong damping; hence, the mechanism leading to chaos and positive Lyapunov exponents is fundamentally different, requiring a strong shear force interacting with noise and dissipation.
%Also some emphasis on small noise result which is great from a dynamical systems perspective...
%\subsection{Conclusion and Outlook}
%Universality 

This leads to the general question of whether the reduction of the perturbed normal form~\eqref{sde:intro} to the simplified cylinder models in \cite{engel-sdlc, linyoung}, yielding the positivity of Lyapunov exponents, can be extended to a broad, maybe even universal, class of oscillators (van der Pol, FitzHugh-Nagumo etc.). 
We leave it as an open question for the future to find a such rigorous generalization for the phenomenon of shear-noise induced chaos.

%%%%%%%%%%%%%%%%%%%%%
\section{Large shear, small noise limit and relation to simplified model}
\label{sec:largeshear}
%Our strategy to show the existence of parameter ranges where $\lambda(\alpha, \beta, a, b, \sigma)> 0$, relies on the following core idea. If we consider scenarios where $\sigma$ is small (where what small means only depends on $\alpha$ and $a$), typical solutions to (\ref{sde:intro}) will spend most time close to the deterministic limit cycle at $\|Z\| = \sqrt{\alpha a^{-1}}$. Expressed in appropriately rescaled polar coordinates, the system can be well approximated by an affine linear system for as long as the solution remains close to the deterministic limit cycle. Thus one can hope that the Lyapunov exponents of (\ref{sde:main}) with small $\sigma$ are close to the Lyapunov exponents of (\ref{sde:sdlc}) with correctly chosen parameters. As was shown in the last section, the Lyapunov exponents of (\ref{sde:sdlc}) are well understood. However, this strategy alone cannot lead to parameter regions for (\ref{sde:main}) with positive Lyapunov exponents, since the Lyapunov exponents of (\ref{sde:sdlc}) are negative for small noise strength. To resolve this problem we will consider a limit of parameters for (\ref{sde:main}), where the noise strength $\sigma$ goes to zero while the shear parameter $b$ simultaneously grows to infinity. As we will show, the top Lyapunov exponents then converge to the top Lyapunov exponent of a system in the form of (\ref{sde:sdlc}). It turns out that this limit of top Lyapunov exponents is, indeed, positive for some parameters. Thereby we establish the existence a of parameter region with positive top Lyapunov exponent for (\ref{sde:main}). 
In this section, we will first give an overview over the simplified model studied by Engel et.~al.~\cite{engel-sdlc}, inspired by Lin and Young \cite{linyoung}. We will then provide a formal, non-rigorous derivation of our main result, which shall serve as a guide for the rigorous proof given in the final section.

%%%%%%%%%%%%%%%%%%
\subsection{A bifurcation in the simplified model}\label{sec:sdlc}
As a slightly modified version of a model studied in \cite{linyoung}, Engel et.~al.~\cite{engel-sdlc} have investigated the SDE
\begin{equation}\label{sde:ogsdlc}
    \begin{cases}
    \begin{aligned}
    \dd \hat S(t) &= - \alphahat \hat S(t) \dd t + \sigmahat \left[\sin\left(\hat \Theta (t)\right) \dd \hat W_1(t) + \cos\left(\hat \Theta (t)\right) \dd \hat W_2(t)\right],\\
    \dd \hat \Theta (t) &= \left[1- \bhat \hat S(t)\right]\dd t,
    \end{aligned}
    \end{cases}
\end{equation}
where $(\hat S(t))$ is a real-valued process and $(\hat \Theta(t))$ is an $S^1 := \R /(2 \pi \Z)$- valued process. Furthermore, $\hat \alpha, \hat \sigma \in \Rp$ are positive real parameters, $\bhat \in \R$ is a real parameter and $(\hat W_1(t), \hat W_2(t))$ are independent Brownian motions. Note that the parameters $\alphahat, \bhat$ and $\sigmahat$ have roles that are similar to their respective counterparts $\alpha, b$ and $\sigma$ in equation~\eqref{sde:intro}. 
Based on previous work by Imkeller and Lederer \cite{ImkellerLederer}, it was shown in~\cite{engel-sdlc} that, depending on the values of the parameters $\alphahat, \bhat$ and $\sigmahat$, the top Lyapunov exponent for \eqref{sde:ogsdlc} can attain both positive and negative values. 

We will start by giving a brief overview of the formal setup for studying solutions of  \eqref{sde:ogsdlc} from a random dynamical systems viewpoint.
Due to the Lipschitz-continuity of all terms on the right-hand side of (\ref{sde:ogsdlc}), this equation generates a differentiable random dynamical system \cite[Definition 1.1.3]{Arnold}, which can be constructed as follows. Set $\Omega = \mathcal C^0(\Rpz, \R^2)$, where $\mathcal C^0(\Rpz, \R^2)$ denotes the space of all continuous functions $\omega: \Rpz \to \R^2$ satisfying $\omega(0) = 0$. We equip $\Omega$ with the compact-open topology and let $\mathbb P$ be the Wiener measure defined on the Borel-measurable subsets of $\Omega$. Furthermore we denote by $\varsigma: \Rpz \times \Omega \to \Omega$ the shift action given by
$$\varsigma(t,\omega)(s) = \omega(t+s) - \omega(t).$$
The Brownian motions $\hat W_{i}$, $i=1,2$, in (\ref{sde:ogsdlc}) will be interpreted as random variables defined by
$$\hat W_1 (t) := \omega_1(t) \text{ and }\hat W_2 (t) := \omega_2(t).$$
There exists a stochastic flow map $\hat \varphi: \Omega \times \Rpz \times (\R \times S^1) \to \R \times S^1$ induced by (\ref{sde:ogsdlc}) which has the following properties.
\begin{enumerate}
    \item[i)] For each $(\hat S_0, \hat \Theta_0) \in \R \times S^1$, the stochastic process $(\hat S(t), \hat \Theta(t))$ defined by 
    $$(\hat S(t), \hat \Theta(t)) := \hat \varphi(t, \cdot, \hat S_0, \hat \Theta_0)$$
    is a strong solution to \eqref{sde:ogsdlc} with initial condition $(\hat S(0), \hat \Theta(0)) = (\hat S_0, \hat \Theta_0)$.
    \item[ii)] After possibly restricting to a $\varsigma$-invariant subset $\Tilde \Omega \subseteq \Omega$ of full $\mathbb P$-measure, the skew-product $(\varsigma, \hat \varphi)$ forms a continuously differentiable random dynamical system (RDS) in the sense of \cite[Definition 1.1.3]{Arnold}. In particular the cocycle property 
    \begin{equation}\label{eq:hatcocycle}
        \hat \varphi(s+t, \omega, \hat S_0, \hat \Theta_0) = \hat \varphi(t, \varsigma(s,\omega), \hat \varphi(s, \omega, \hat S_0, \hat \Theta_0))
    \end{equation}
    
    holds for every $s,t \in \Rpz$, $\omega \in \Tilde\Omega$ and $(\hat S_0, \hat \Theta_0) \in \R \times S^1$.
\end{enumerate}
For simplicity of notation, we will identify $\Tilde \Omega$ and $\Omega$ and will use $\hat \varphi$ to mean its restriction to $\Tilde \Omega$. This allows us to define a linear map $\hat \Phi: \Rpz \times \Omega \times (\R \times S^1) \to \R^{2\times 2}$ by
$$\hat \Phi(t, \omega, \hat S_0, \hat \Theta_0) = \D_{(\hat S_0, \hat \Theta_0)} \hat \varphi(t, \omega, \hat S_0, \hat \Theta_0).$$
The cocycle property of $\hat \Phi$ over the RDS $(\varsigma, \hat \varphi)$, i.e.
$$\hat \Phi(s+t, \omega, \hat S_0, \hat \Theta_0) = \hat \Phi(t, \varsigma(s,\omega), \hat \varphi(s, \omega, \hat S_0, \hat \Theta_0)) \hat \Phi(s, \omega, \hat S_0, \hat \Theta_0),$$
comes as a direct consequence of applying the chain rule to (\ref{eq:hatcocycle}).
By the Furstenberg-Kesten Theorem, the integrability condition of which can be easily verified for our situation, the top Lyapunov exponent $\hat \lambda(\alphahat, \bhat, \sigmahat)$ can now be defined as 
$$\hat \lambda(\alphahat, \bhat, \sigmahat) := \lim_{t \to \infty} \frac{1}{t} \log\|\hat \Phi(t, \omega, \hat S_0, \hat \Theta_0)\|.$$
A useful tool for studying Lyapunov exponents is the variational process given by 
$$\begin{pmatrix}\hat s(t)\\ \hat \theta (t)\end{pmatrix} := \hat \Phi(t, \omega, \hat S_0, \hat \Theta_0) \begin{pmatrix}\hat s_0\\ \hat \theta_0\end{pmatrix},$$
for some initial condition $(\hat s_0, \hat \theta_0) \neq (0,0)$. This process satisfies the so-called variational equation (see e.g. \cite[Theorem 2.3.32]{Arnold}), which in our case takes the form
\begin{equation}
\begin{cases}
\begin{aligned}\label{sde:sdlc}
    \dd \hat s(t) &= -\alphahat \hat s(t) \dd t + \sigmahat\hat \theta(t) \dd \What_3(t),\\
    \dd \hat \theta(t) &= -\bhat \hat s(t) \dd t,
\end{aligned}
\end{cases}
\end{equation}
obtained by differentiating the coefficients of the original SDE~\eqref{sde:ogsdlc}.
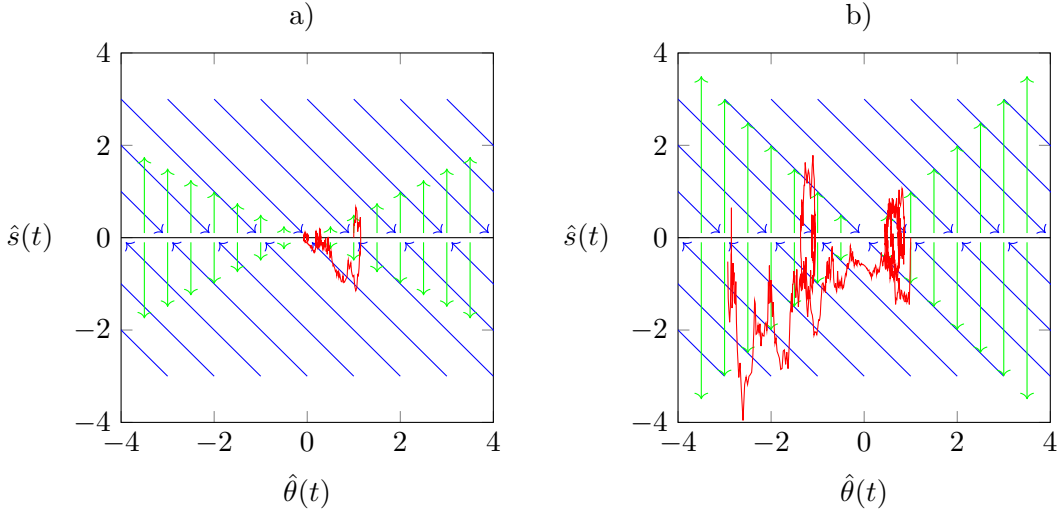
\begin{figure}[t!]\label{fig:sdlc}
    \centering
     %a)
 \begin{minipage}{0.45\textwidth}
 \centering
 ~~~~~~~~~~a)\\
 \begin{tikzpicture}
 
\begin{axis}[
    xmin = -4, xmax = 4,
    ymin = -4, ymax = 4,
    xtick distance = 2,
    ytick distance = 2,
    width = 0.9\textwidth,
    height = 0.9\textwidth,
    xlabel = {$\hat \theta(t)$},
    ylabel = {$\hat s(t)$},
    ylabel style={rotate=-90}]
\draw[-,thin,black](-4,0)--(4,0);

 \draw[->, thin, blue](-6,3)--(-3.1,0.1);
 \draw[->, thin, blue](-5,3)--(-2.1,0.1);
 \draw[->, thin, blue](-4,3)--(-1.1,0.1);
 \draw[->, thin, blue](-3,3)--(-0.1,0.1);
 \draw[->, thin, blue](-2,3)--(0.9,0.1);
 \draw[->, thin, blue](-1,3)--(1.9,0.1);
 \draw[->, thin, blue](0,3)--(2.9,0.1);
 \draw[->, thin, blue](1,3)--(3.9,0.1);
 \draw[->, thin, blue](2,3)--(4.9,0.1);
 \draw[->, thin, blue](3,3)--(5.9,0.1);
 
 \draw[->, thin, blue](-3,-3)--(-5.9,-0.1);
 \draw[->, thin, blue](-2,-3)--(-4.9,-0.1);
 \draw[->, thin, blue](-1,-3)--(-3.9,-0.1);
 \draw[->, thin, blue](0,-3)--(-2.9,-0.1);
 \draw[->, thin, blue](1,-3)--(-1.9,-0.1);
 \draw[->, thin, blue](2,-3)--(-0.9,-0.1);
 \draw[->, thin, blue](3,-3)--(0.1,-0.1);
 \draw[->, thin, blue](4,-3)--(1.1,-0.1);
 \draw[->, thin, blue](5,-3)--(2.1,-0.1);
 \draw[->, thin, blue](6,-3)--(3.1,-0.1);
 
\draw[->, thin, green](0.5,0.1)--(0.5,0.25);
 \draw[->, thin, green](1,0.1)--(1,0.5);
 \draw[->, thin, green](1.5,0.1)--(1.5,0.75);
 \draw[->, thin, green](2,0.1)--(2,1);
 \draw[->, thin, green](2.5,0.1)--(2.5,1.25);
 \draw[->, thin, green](3,0.1)--(3,1.5);
 \draw[->, thin, green](3.5,0.1)--(3.5,1.75);
 \draw[->, thin, green](-0.5,0.1)--(-0.5,0.25);
 \draw[->, thin, green](-1,0.1)--(-1,0.5);
 \draw[->, thin, green](-1.5,0.1)--(-1.5,0.75);
 \draw[->, thin, green](-2,0.1)--(-2,1);
 \draw[->, thin, green](-2.5,0.1)--(-2.5,1.25);
 \draw[->, thin, green](-3,0.1)--(-3,1.5);
 \draw[->, thin, green](-3.5,0.1)--(-3.5,1.75);

\draw[->, thin, green](0.5,-0.1)--(0.5,-0.25);
 \draw[->, thin, green](1,-0.1)--(1,-0.5);
 \draw[->, thin, green](1.5,-0.1)--(1.5,-0.75);
 \draw[->, thin, green](2,-0.1)--(2,-1);
 \draw[->, thin, green](2.5,-0.1)--(2.5,-1.25);
 \draw[->, thin, green](3,-0.1)--(3,-1.5);
 \draw[->, thin, green](3.5,-0.1)--(3.5,-1.75);
 \draw[->, thin, green](-0.5,-0.1)--(-0.5,-0.25);
 \draw[->, thin, green](-1,-0.1)--(-1,-0.5);
 \draw[->, thin, green](-1.5,-0.1)--(-1.5,-0.75);
 \draw[->, thin, green](-2,-0.1)--(-2,-1);
 \draw[->, thin, green](-2.5,-0.1)--(-2.5,-1.25);
 \draw[->, thin, green](-3,-0.1)--(-3,-1.5);
 \draw[->, thin, green](-3.5,-0.1)--(-3.5,-1.75);
 
 \addplot[
    thin,
    red
] file[skip first] {sigma1.dat};
\end{axis}
 
\end{tikzpicture}
 \end{minipage}
\begin{minipage}{0.45\textwidth}

\centering
~~~~~~~~~~b)\\
\begin{tikzpicture}
 
\begin{axis}[
    xmin = -4, xmax = 4,
    ymin = -4, ymax = 4,
    xtick distance = 2,
    ytick distance = 2,
    width = 0.9\textwidth,
    height = 0.9\textwidth,
    xlabel = {$\hat \theta(t)$},
    ylabel = {$\hat s(t)$},
    ylabel style={rotate=-90}]
\draw[-,thin,black](-4,0)--(4,0);

 \draw[->, thin, blue](-6,3)--(-3.1,0.1);
 \draw[->, thin, blue](-5,3)--(-2.1,0.1);
 \draw[->, thin, blue](-4,3)--(-1.1,0.1);
 \draw[->, thin, blue](-3,3)--(-0.1,0.1);
 \draw[->, thin, blue](-2,3)--(0.9,0.1);
 \draw[->, thin, blue](-1,3)--(1.9,0.1);
 \draw[->, thin, blue](0,3)--(2.9,0.1);
 \draw[->, thin, blue](1,3)--(3.9,0.1);
 \draw[->, thin, blue](2,3)--(4.9,0.1);
 \draw[->, thin, blue](3,3)--(5.9,0.1);
 
 \draw[->, thin, blue](-3,-3)--(-5.9,-0.1);
 \draw[->, thin, blue](-2,-3)--(-4.9,-0.1);
 \draw[->, thin, blue](-1,-3)--(-3.9,-0.1);
 \draw[->, thin, blue](0,-3)--(-2.9,-0.1);
 \draw[->, thin, blue](1,-3)--(-1.9,-0.1);
 \draw[->, thin, blue](2,-3)--(-0.9,-0.1);
 \draw[->, thin, blue](3,-3)--(0.1,-0.1);
 \draw[->, thin, blue](4,-3)--(1.1,-0.1);
 \draw[->, thin, blue](5,-3)--(2.1,-0.1);
 \draw[->, thin, blue](6,-3)--(3.1,-0.1);
 
  \draw[->, thin, green](0.5,0.1)--(0.5,0.5);
 \draw[->, thin, green](1,0.1)--(1,1);
 \draw[->, thin, green](1.5,0.1)--(1.5,1.5);
 \draw[->, thin, green](2,0.1)--(2,2);
 \draw[->, thin, green](2.5,0.1)--(2.5,2.5);
 \draw[->, thin, green](3,0.1)--(3,3);
 \draw[->, thin, green](3.5,0.1)--(3.5,3.5);
 \draw[->, thin, green](-0.5,0.1)--(-0.5,0.5);
 \draw[->, thin, green](-1,0.1)--(-1,1);
 \draw[->, thin, green](-1.5,0.1)--(-1.5,1.5);
 \draw[->, thin, green](-2,0.1)--(-2,2);
 \draw[->, thin, green](-2.5,0.1)--(-2.5,2.5);
 \draw[->, thin, green](-3,0.1)--(-3,3);
 \draw[->, thin, green](-3.5,0.1)--(-3.5,3.5);

\draw[->, thin, green](0.5,-0.1)--(0.5,-0.5);
 \draw[->, thin, green](1,-0.1)--(1,-1);
 \draw[->, thin, green](1.5,-0.1)--(1.5,-1.5);
 \draw[->, thin, green](2,-0.1)--(2,-2);
 \draw[->, thin, green](2.5,-0.1)--(2.5,-2.5);
 \draw[->, thin, green](3,-0.1)--(3,-3);
 \draw[->, thin, green](3.5,-0.1)--(3.5,-3.5);
 \draw[->, thin, green](-0.5,-0.1)--(-0.5,-0.5);
 \draw[->, thin, green](-1,-0.1)--(-1,-1);
 \draw[->, thin, green](-1.5,-0.1)--(-1.5,-1.5);
 \draw[->, thin, green](-2,-0.1)--(-2,-2);
 \draw[->, thin, green](-2.5,-0.1)--(-2.5,-2.5);
 \draw[->, thin, green](-3,-0.1)--(-3,-3);
 \draw[->, thin, green](-3.5,-0.1)--(-3.5,-3.5);
 
 \addplot[
    thin,
    red
] file[skip first] {sigma2.dat};
\end{axis}
 
\end{tikzpicture}
\end{minipage}

 
    \caption{Typical trajectories (red) for $(\hat s(t), \hat \theta(t))$, $t \in [0,10]$, starting in $(\hat s_0, \hat \theta_0) = (0,1)$, with parameters $\Hat{\alpha} = \Hat{b} = \Hat{\sigma} =1$ (a) and $\Hat{\alpha} = \Hat{b} =1$, $\Hat{\sigma} = 2$ (b). The blue arrows indicate the drift field and the green arrows the noise field. Note that for a) we have $\zeta := \bhat^2\sigmahat^2\alphahat^{-3} = 1 < C_0$, while for b) we have $\zeta = 4 > C_0$.}
\end{figure}
Here, the process $(\What_3(t))$ is given by 
$$\dd\What_3(t) := \cos\left(\hat \Theta(t)\right)\dd \What_1(t) - \sin\left(\hat \Theta(t)\right)\dd \What_2(t).$$
By Levy's criterion, see e.g. \cite[Theorem IV.3.6]{RevuzYor}, the process $(\What_3 (t))$ is again a Brownian motion. 
\begin{remark}\label{rem:indLaw}
Note that $(\What_3(t))$ does not only depend on $\omega$ but also on the initial condition $(\hat S_0, \hat \Theta_0)$ for the original equation (\ref{sde:ogsdlc}). Thus, the process $(\hat s(t), \hat \theta(t))$ also depends not only on its own initial condition $(\hat s_0, \hat \theta_0)$, but also on $(\hat S_0, \hat \Theta_0)$. However, the law of $(\What_3 (t))$ is always that of a Brownian motion, independently of $(\hat S_0, \hat \Theta_0)$; hence, the law of $(\hat s(t), \hat \theta(t))$ also does not depend on $(\hat S_0, \hat \Theta_0)$ (but of course still on $(\hat s_0, \hat \theta_0)$).
\end{remark}

It follows from results in \cite{ImkellerLederer} that, for any initial condition $(\hat s_0, \hat \theta_0) \neq (0,0)$, we almost surely have
\begin{equation}
\label{eq:topLyap_conv}
\lambdahat(\alphahat, \bhat, \sigmahat) = \lim_{t \to \infty} \frac{1}{t}\log\sqrt{\hat s(t)^2 + \hat \theta(t)^2}.
\end{equation}
Thus the Lyapunov exponent $\lambdahat(\alphahat, \bhat, \sigmahat)$ is fully determined by the law of the SDE (\ref{sde:sdlc}).
\begin{remark}
For each $(\hat S_0, \hat \Theta_0) \in \R \times S^1$ and almost every $\omega \in \Omega$ there will still exist a one-dimensional subspace $V(\omega, \hat S_0, \hat \Theta_0)\subset \R^2$ such that
$$(\hat s_0, \hat \theta_0) \in V(\omega, \hat S_0, \hat \Theta_0) \Rightarrow \lim_{t \to \infty} \frac{1}{t}\log\sqrt{\hat s(t)^2 + \hat \theta(t)^2} = \lambdahat_2(\alphahat, \bhat, \sigmahat) < \lambdahat(\alphahat, \bhat, \sigmahat),$$
where $\lambdahat_2(\alphahat, \bhat, \sigmahat)$ is the second Lyapunov exponent. However, due to a result from \cite{ImkellerFlags} based on Hörmander's condition, one can show that the distribution of $V(\cdot, \hat S_0, \hat \Theta_0)$ in the projective space $\R \mathbb P^1$ is atomless. Thus, for each fixed $(\hat S_0, \hat \Theta_0, \hat s_0, \hat \theta_0)$ we have for almost every $\omega \in \Omega$
$$(\hat s_0, \hat \theta_0) \notin V(\omega, \hat S_0, \hat \Theta_0),$$
such that the equality~\eqref{eq:topLyap_conv} holds.
\end{remark}
It turns out that one can make use of homogenities of system \eqref{sde:sdlc}
to simplify the analysis of the sign of the top Lyapunov exponent. Substituting $\hat \theta(t) \mapsto \gamma \hat \theta(t)$ for some $\gamma > 0$ gives the identity
$$\lambdahat(\alphahat, \gamma \bhat, \gamma^{-1}\sigmahat) = \lambdahat(\alphahat, \bhat, \sigmahat).$$
Similarly, substituting $t \mapsto \delta t$ gives
$$\lambdahat(\delta\alphahat, \delta\bhat, \sqrt{\delta}\sigmahat) = \delta \lambdahat(\alphahat, \bhat, \sigmahat).$$
Together, these two identities show that the sign of $\lambdahat(\alphahat, \bhat, \sigmahat)$ will only depend on the value of $\zeta := \bhat^2\sigmahat^2\alphahat^{-3}$. In \cite{ImkellerLederer}, 
Imkeller and Lederer derived a semi-explicit formula for $\lambdahat(\alphahat, \bhat, \sigmahat)$. To give this formula, we first define for each $\zeta > 0$ a function $m_\zeta : \Rp \to \Rpz$ by
$$m_\zeta(u) := K_\zeta^{-1} \frac{1}{\sqrt{u}} \exp\left(-\frac{1}{\zeta}\left(\frac{1}{6}u^3-\frac{1}{2}u\right)\right).$$
Here $K_\zeta>0$ is a normalization constant given by
$$K_\zeta := \int_0^\infty \frac{1}{\sqrt{u}} \exp\left(-\frac{1}{\zeta}\left(\frac{1}{6}u^3-\frac{1}{2}u\right)\right) \dd u,$$
such that $m_\zeta$ becomes the density of a probability distribution on $\Rp$. Next, we define a function $\Psi: \Rp \to \R$ by
\begin{equation}\label{def:mu}
    \Psi(\zeta) := \frac{1}{2}\left(\int_0^\infty u~ m_\zeta(u)\dd u -1\right).
\end{equation}
Now one can give the following formula for the top Lyapunov exponent of the RDS induced by equation~\eqref{sde:ogsdlc}.
\begin{theorem}[{\cite[Theorem 3]{ImkellerLederer}, see also \cite[Theorem 2.1]{engel-sdlc}}]\label{theo:LE-sdlc}
For any initial condition $(\hat s(0), \hat \theta(0))\neq (0,0)$ the solution to (\ref{sde:sdlc}) satisfies
$$\lim_{t \to \infty} \frac{1}{t}\log\left(\sqrt{\hat s(t)^2+\hat \theta(t)^2}\right) = \lambdahat(\alphahat, \bhat, \sigmahat) = \alphahat~ \Psi\left(\frac{\bhat^2\sigmahat^2}{\alphahat^3}\right).$$
\end{theorem}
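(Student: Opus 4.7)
The plan is to apply the Furstenberg-Khasminskii formula for the projective process. First, I would exploit the two scaling symmetries noted just before the statement to reduce to the normalized case $\alphahat = \bhat = 1$, $\sigmahat^2 = \zeta$, so that it suffices to prove $\lambdahat = \Psi(\zeta)$. In this normalized form, I introduce the scalar projective coordinate $V := \hat s/\hat\theta$. Since $\dd\langle\hat s\rangle = \zeta\hat\theta^2\,\dd t$ and $\hat\theta$ has no Brownian component, a one-line Itô calculation produces the closed one-dimensional SDE
$$\dd V = (V^2 - V)\,\dd t + \sqrt\zeta\,\dd\What_3,$$
whose natural state space is the compactified projective line $\R\mathbb{P}^1 \simeq \R \cup \{\infty\}$: the drift pushes $V$ to $+\infty$ in finite time, after which it re-enters from $-\infty$. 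In parallel, applying Itô's formula to $\log R$ with $R = \sqrt{\hat s^2 + \hat\theta^2}$ and rewriting the resulting angular expression in terms of $V$ via $\cos^2\psi = V^2/(1+V^2)$, $\sin^2\psi = 1/(1+V^2)$ yields
$$\dd\log R = Q(V)\,\dd t + \dd M(t),$$
where $Q$ is an explicit rational function bounded on $\R\mathbb{P}^1$ and $M$ is a continuous martingale with $\langle M\rangle_t \leq Ct$, so $M(t)/t \to 0$ almost surely.

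Second, I would show that $V$ has a unique invariant probability measure $\nu$ on $\R\mathbb{P}^1$. Smoothness and positivity of the stationary density are immediate from the non-degenerate noise, and uniqueness follows from the compactness of $\R\mathbb{P}^1$ together with the fact that the drift $V^2 - V$ alone already sweeps $V$ once around the projective line in finite time. The standard Furstenberg-Khasminskii machinery then gives
$$\lambdahat = \lim_{t \to \infty}\frac{1}{t}\log R(t) = \int_{\R\mathbb{P}^1} Q\,\dd\nu \quad \text{almost surely}.$$
The third step is to compute $\nu$ and evaluate this integral. The stationary Fokker-Planck equation reduces to the first-order linear ODE
$$\tfrac{\zeta}{2}\rho'(V) - (V^2 - V)\rho(V) = J$$
with constant probability current $J > 0$, whose solution is expressible in terms of the cubic potential $h(V) = V^3/3 - V^2/2$. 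A suitable change of variable $U = \Phi(V)$ transports $\rho(V)\,\dd V$ onto the density $m_\zeta$ on $\R_{>0}$, and after substituting into $\int Q\,\dd\nu$ the rational pieces reorganise into the identity $\tfrac{1}{2}\bigl(\int_0^\infty u\, m_\zeta(u)\,\dd u - 1\bigr) = \Psi(\zeta)$.

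The genuine obstacle is this last step: identifying the correct $\Phi$ and witnessing the algebraic collapse of the Furstenberg-Khasminskii integrand $Q$ into the single first moment of $m_\zeta$. This is precisely the content of the Imkeller-Lederer calculation \cite{ImkellerLederer}; by contrast, the scaling reduction, the derivation of the projective SDE, and the ergodicity of $V$ on $\R\mathbb{P}^1$ are structural and essentially routine.
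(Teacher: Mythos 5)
The paper does not prove this theorem; it is cited from Imkeller--Lederer and from Engel et al.\ and used downstream as a black box (for instance as the first equality in the proof of Proposition~\ref{prop:fkfsdlc}). Your reconstruction follows the natural Furstenberg--Khasminskii route that those references take, and the structural steps are correct: the two scaling homogeneities the paper records just before the theorem do reduce matters to $\alphahat=\bhat=1$, $\sigmahat^2=\zeta$; the It\^o computation for $V=\hat s/\hat\theta$ carries no correction term because $\hat\theta$ has no Brownian part, and indeed gives $\dd V=(V^2-V)\,\dd t+\sqrt\zeta\,\dd\What_3$; the drift of $\log R$ is the bounded rational function $Q(V)=-\frac{V^2+V}{V^2+1}+\frac{\zeta}{2}\,\frac{1-V^2}{(V^2+1)^2}$ and the martingale part has linearly growing bracket; uniqueness of the stationary law on the compact projective line, via the explosive drift plus nondegenerate noise, then yields $\lambdahat=\int_{\R\mathbb{P}^1} Q\,\dd\nu$.

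Where the description undersells the work is the final step. The stationary solution of $\tfrac{\zeta}{2}\rho'-(V^2-V)\rho=\mathrm{const}$ with the winding boundary condition is $\rho(V)\propto e^{2h(V)/\zeta}\int_V^\infty e^{-2h(w)/\zeta}\,\dd w$, $h(V)=V^3/3-V^2/2$; this nested-integral density is not the pushforward of $m_\zeta(u)\propto u^{-1/2}\exp\bigl(-\tfrac{1}{\zeta}(u^3/6-u/2)\bigr)$ under any change of variable $\Phi$ (nor can $\Phi$ be a homeomorphism, since $\R\mathbb{P}^1$ is compact and $\R_{>0}$ is not). What actually has to happen is that $\int Q\,\dd\nu$ becomes a double integral, the order of integration is exchanged, and only then does one integral collapse to leave the density $m_\zeta$ in the remaining variable together with the first-moment formula for $\Psi$. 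You are transparent that this is exactly the content of the Imkeller--Lederer reference, so the proposal is not wrong, but calling it ``a suitable change of variable'' makes the genuinely nontrivial step sound more routine than it is.
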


In particular, this means that the sign of $\lambdahat(\alphahat, \bhat, \sigmahat)$ is determined by the sign of $\Psi(\zeta)$, where $\zeta := \bhat^2\sigmahat^2\alphahat^{-3}$. Engel et.~al.~used this fact to show the following bifurcation result.
\begin{theorem}[{\cite[Theorem 1.1]{engel-sdlc}}]\label{theo:bif}
The function $\Psi : \Rp \to \R$ has a unique zero at $C_0\approx 3.45$. Furthermore, we have $\Psi(\zeta) < 0$ for $\zeta < C_0$ and $\Psi(\zeta)>0$ for $\zeta>C_0$. In particular, the top Lyapunov exponent satisfies
$$\lambdahat(\alphahat, \bhat, \sigmahat) \begin{cases}>0, \quad \emph{ if } \bhat^2\sigmahat^2\alphahat^{-3} > C_0 \\
=0, \quad \emph{ if } \bhat^2\sigmahat^2\alphahat^{-3} = C_0 \\
<0, \quad \emph{ if } \bhat^2\sigmahat^2\alphahat^{-3} < C_0.
\end{cases}$$
\end{theorem}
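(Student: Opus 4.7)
The plan is to study the one-dimensional function
$$\Psi(\zeta)=\tfrac{1}{2}\bigl(\mathbb E_\zeta[U]-1\bigr),\qquad U\sim m_\zeta,$$
on $\zeta\in\Rp$ by first identifying the sign of $\mathbb E_\zeta[U]-1$ at the two endpoints $\zeta\to 0^+$ and $\zeta\to\infty$ via Laplace asymptotics, and then pinning down uniqueness of the zero by a derivative-at-a-root argument. Writing $g(u):=\tfrac{1}{6}u^3-\tfrac{1}{2}u$ so that $m_\zeta(u)\propto u^{-1/2}e^{-g(u)/\zeta}$, the potential $g$ has its unique minimum on $\Rp$ at $u=1$ with $g(1)=-1/3$ and $g''(1)=1$. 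A standard Laplace expansion about $u=1$ of both the numerator and denominator of
$$\mathbb E_\zeta[U]=\frac{\int_0^\infty u^{1/2} e^{-g(u)/\zeta}\,\dd u}{\int_0^\infty u^{-1/2} e^{-g(u)/\zeta}\,\dd u}$$
(using the substitution $u=1+\sqrt\zeta\,x$ and expanding all prefactors to second order) yields $\mathbb E_\zeta[U]=1-\zeta+O(\zeta^{3/2})$, hence $\Psi(\zeta)=-\zeta/2+O(\zeta^{3/2})$; in particular $\Psi(\zeta)<0$ for all sufficiently small $\zeta>0$.

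At the other endpoint I rescale by $u=\zeta^{1/3}v$, after which the density becomes $\tilde m_\zeta(v)\propto v^{-1/2}\exp(-v^3/6+\zeta^{-2/3}v/2)$ and converges, along with its first moment (by dominated convergence using the cubic tail bound), to the probability density $v^{-1/2}e^{-v^3/6}/Z$ on $\Rp$. A direct $\Gamma$-function computation then gives $\mathbb E_\zeta[U]\sim C\,\zeta^{1/3}$ with $C=3^{1/3}\sqrt{2\pi}/(2^{1/6}\Gamma(1/6))>0$, so $\Psi(\zeta)\to\infty$. Continuity of $\Psi$ and the intermediate value theorem then produce at least one zero $C_0\in\Rp$. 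For uniqueness, differentiation under the integral (justified by a Gaussian tail bound) yields
$$\Psi'(\zeta)=\frac{1}{2\zeta^2}\,\mathrm{Cov}_{m_\zeta}\!\bigl(U,g(U)\bigr),$$
so at any zero $\zeta^*$, where $\mathbb E_{\zeta^*}[U]=1$, one has $\Psi'(\zeta^*)=\tfrac{1}{2(\zeta^*)^2}\,\mathbb E_{\zeta^*}\bigl[(U-1)g(U)\bigr]$. If this expectation is strictly positive at every zero, then every zero of $\Psi$ is a transversal upward crossing and, combined with the endpoint asymptotics, uniqueness follows; the numerical value $C_0\approx 3.45$ is then obtained by rigorous root-finding.

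The hardest step is establishing the positivity of $\mathbb E_{\zeta^*}[(U-1)g(U)]$, since the integrand $(u-1)g(u)=u(u-1)(u^2-3)/6$ is positive on $(0,1)\cup(\sqrt 3,\infty)$ and negative on $(1,\sqrt 3)$: the sign of the expectation has to be extracted from the specific shape of the weight $m_{\zeta^*}$ rather than from the integrand alone. An integration-by-parts strategy against $g'(u)=(u^2-1)/2$ is natural but complicated by the $u^{-1/2}$ boundary singularity at $u=0$. A robust fallback, in the spirit of the computer-assisted analysis in \cite{BredenEngel}, is to verify uniqueness (and the concrete value $C_0$) by interval-arithmetic quadrature on a compact bracket $[\zeta_1,\zeta_2]$ containing the root, and to control the tails $(0,\zeta_1]\cup[\zeta_2,\infty)$ via explicit quantitative forms of the Laplace asymptotics above.
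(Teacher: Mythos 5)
The theorem you are proving is not proved in this paper: it is quoted verbatim from \cite[Theorem 1.1]{engel-sdlc}, so there is no in-paper argument for me to compare you against, and I will instead assess your plan on its own terms.

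Your endpoint asymptotics are correct. For $\zeta\to 0^+$, expanding both prefactors $u^{\pm 1/2}$ and the cubic part of the exponent to second order about $u=1$ and integrating against the Gaussian gives a numerator $\propto 1-\zeta/6+O(\zeta^{3/2})$ and a denominator $\propto 1+5\zeta/6+O(\zeta^{3/2})$, hence $\mathbb E_\zeta[U]=1-\zeta+O(\zeta^{3/2})$ as you state. The rescaling $u=\zeta^{1/3}v$ and the $\Gamma$-function evaluation (which, written uniformly, is the constant $6^{1/3}\sqrt\pi/\Gamma(1/6)$ and agrees with your expression) give $\mathbb E_\zeta[U]\to\infty$. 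The covariance formula $\Psi'(\zeta)=\tfrac{1}{2\zeta^2}\,\mathrm{Cov}_{m_\zeta}(U,g(U))$ is also correct, as is the observation that at a zero $\zeta^*$ (where $\mathbb E_{\zeta^*}[U]=1$) it reduces to $\tfrac{1}{2(\zeta^*)^2}\mathbb E_{\zeta^*}\left[(U-1)g(U)\right]$.

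However, the crux --- strict positivity of $\mathbb E_{\zeta^*}\left[(U-1)g(U)\right]$ at every root --- is not established, and this is a genuine gap, not a minor technicality. Writing $V:=U-1$ (so $\mathbb E_{\zeta^*}[V]=0$ and $V>-1$), a direct expansion gives
\begin{equation*}
\mathbb E_{\zeta^*}\!\left[(U-1)g(U)\right]=\tfrac{1}{6}\,\mathbb E_{\zeta^*}\!\left[V^{4}+3V^{3}\right]=\tfrac{1}{6}\,\mathbb E_{\zeta^*}\!\left[V^{3}(V+3)\right],
\end{equation*}
so positivity is precisely a competition between kurtosis and skewness of the centered variable, and the integrand $V^3(V+3)$ changes sign at $V=0$. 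Since the density $m_\zeta$ has a genuine $u^{-1/2}$ singularity at the origin, a priori the distribution of $V$ could be left-skewed enough to make this negative; ruling that out requires real information about $m_{\zeta^*}$ beyond its moments, and the integration-by-parts route you mention runs into exactly the boundary singularity you flagged. Without a proof of this sign, your endpoint-plus-transversality argument cannot even conclude that the number of zeros is finite, so the main statement (uniqueness and the strict trichotomy) is not reached. Your proposed fallback --- verifying a compact bracket by interval arithmetic and controlling the tails by quantitative Laplace asymptotics --- is a legitimate route (indeed \cite{engel-sdlc} itself does not avoid computation for $C_0\approx 3.45$), but to make it a proof you would need to specify what is being verified on the bracket (e.g.\ that $\Psi$ changes sign exactly once, or that $\Psi'>0$ throughout a bracket containing all potential roots of $\Psi$) and to turn your $O(\cdot)$ asymptotics into explicit two-sided bounds valid on the complementary tails; neither is trivial. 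As it stands, the proposal is a good reduction of the problem, but the central uniqueness claim remains open.
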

\begin{remark}
Our constant $C_0 \approx 3.45$ is different from the constant $c_0 \approx 0.2823$ from \cite{engel-sdlc}, but related by $C_0 := c_0^{-1}$. This is due to a slight reformulation of the statement.
\end{remark}
%%%%%%%%
\subsection{A formal derivation of the large shear, small noise limit}\label{sec:formal derivation}
Now we turn our attention again to the Hopf normal form with additive noise, which we will write as 
\begin{equation}\label{sde:main}
    \dd Z(t) = F(Z(t))\dd t + \sigma \dd W(t),
\end{equation}
where $F: \R^2 \to \R^2$ is given by
$$F(Z):= \begin{pmatrix}\alpha & -\beta \\ \beta &\alpha\end{pmatrix}Z  - \|Z\|^2\begin{pmatrix}a & -b \\ b & a\end{pmatrix}Z.$$
Recall that $\alpha, a$ and $\sigma$ are positive real parameters, $\beta$ and $b$ are real parameters and $(W(t))_{t \in \Rpz}$ is a 2-dimensional Brownian motion. We will consider the parameters $\alpha, \beta, a, b$ and $\sigma$ as fixed for the time being.
The formal setup for studying this equation is very similar to the one described in the previous subsection. System \eqref{sde:main} has already been investigated from a random dynamical systems viewpoint in \cite{deville} and \cite{DoanEngeletal} and we will build on their results. As before, the probability space is given by $\Omega = \mathcal C^0(\Rpz, \R^2)$, with Wiener measure $\mathbb P$ and $\mathbb P$-invariant time shift denoted by $\varsigma: \Rpz \times \Omega \to \Omega$, again identifying the process $(W(t))$ with random variables
%defined by 
$W(t) := \omega(t).$ It has been shown in \cite[Theorem A]{DoanEngeletal}, that \eqref{sde:main} induces a random dynamical system in the same sense as in the previous subsection, i.e.~that there exists a map $\varphi: \Rpz \times \Omega \times \R^2 \to \R^2$ satisfying the following properties:
\begin{enumerate}
    \item[i)] For each $Z_0 \in \R^2$, the stochastic process $(Z(t))$ defined by 
    $$Z(t) := \varphi(t, \cdot, Z_0)$$
    is a strong solution to (\ref{sde:main}) with initial condition $Z(0)= Z_0$.
    \item[ii)] After possibly restricting to a $\varsigma$-invariant subset $\Tilde \Omega \subseteq \Omega$ of full $\mathbb P$-measure, the skew-product $(\varsigma, \varphi)$ forms a continuously differentiable random dynamical system in the sense of \cite[Definition 1.1.3]{Arnold}. In particular, the cocycle property
    $$\varphi(s+t, \omega, Z_0) = \varphi(t, \varsigma(s, \omega), \varphi(s, \omega, Z_0))$$
    holds for every $s,t \in \Rpz$, $\omega \in \Tilde \Omega$ and $Z_0 \in \R^2$.
\end{enumerate}
As before, we define the linearization $\Phi: \Rpz \times \Omega \times \R^2 \to \R^{2\times 2}$ by 
$$\Phi(t, \omega, Z_0) := \D_{Z_0} \varphi(t, \omega, Z_0),$$
which satisfies the identity 
$$\Phi(s+t, \omega, Z_0) = \Phi(t, \varsigma(s, \omega), \varphi(s, \omega, Z_0)) \Phi(s, \omega, Z_0).$$
Since the integrability condition of the Furstenberg-Kesten Theorem is verified \cite[Proposition 4.1]{DoanEngeletal}, we can define the top Lyapunov $\lambda(\alpha, \beta, a, b, \sigma)$ by
$$\lambda(\alpha, \beta, a, b, \sigma) := \lim_{t \to \infty} \frac{1}{t} \log \|\Phi(t, \omega, Z_0)\|.$$
Analogously to the previous subsection, we introduce the variational process $(Y(t))$ defined by 
$$Y(t):= \Phi(t, \cdot, Z(t))Y_0$$
for some initial condition $Y_0 \neq (0,0)$. The corresponding variational equation is given by the linear random ordinary differential equation
\begin{align}\label{sde:vari}
    \dd Y(t) =& [\D F(Z(t))] Y(t) \dd t\nonumber \\
    =& \left[\begin{pmatrix}\alpha & -\beta \\ \beta & \alpha \end{pmatrix} - \|Z(t)\|^2\begin{pmatrix}a & -b \\ b & a \end{pmatrix} - 2\begin{pmatrix}a & -b \\ b & a \end{pmatrix}Z(t)Z(t)^T\right]Y(t) \dd t.
\end{align}
It was shown in \cite{deville} that the top Lyapunov exponent can be expressed by the almost-sure identity 
\begin{equation}
\label{eq:Lyap_Y}
 \lambda(\alpha, \beta, a, b, \sigma) = \lim_{t \to \infty} \frac{1}{t} \log \|Y(t)\|,   
\end{equation}
which holds independently of the chosen initial conditions $Z_0 \in \R^2$ and $Y_0 \in \R^2\setminus \{0\}$.

One of the defining features of equation \eqref{sde:main} is the rotational symmetry of the drift term. Thus, it is reasonable to express the system in polar coordinates. Given the solution $Z(t)$ to~\eqref{sde:main}, we consider the $\Rpz$-valued process $(r(t))$ and the $\R/(2\pi\Z)$-valued process $(\phi(t))$ uniquely defined by 
\begin{equation}\label{eq:polardef}
    \begin{pmatrix}Z_1(t)\\ Z_2(t)\end{pmatrix} = r(t) \begin{pmatrix}\cos(\phi(t))\\\sin(\phi(t))\end{pmatrix},~ \forall t\geq 0.
\end{equation}
\begin{proposition}\label{prop:polarSDE}
The processes $(r(t))$ and $(\phi(t))$ satisfy the It\^o-SDEs
\begin{equation}
    \begin{cases}
    \begin{aligned}\label{sde:polar1}
    \dd r(t) &= \left(\alpha r(t) - a r(t)^3 + \frac{\sigma^2}{2r(t)}\right) \dd t + \sigma\left[\cos(\phi(t))\dd W_1(t) + \sin(\phi(t)) \dd W_2(t)\right],\\
    \dd \phi(t) &= \left(\beta -br(t)^2\right)\dd t +\frac{\sigma}{r(t)}\left[-\sin(\phi(t))\dd W_1(t) + \cos(\phi(t))\dd W_2(t)\right].
    \end{aligned}
    \end{cases}
\end{equation}
\end{proposition}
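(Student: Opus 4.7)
The plan is to prove Proposition~\ref{prop:polarSDE} as a direct application of It\^o's formula to the change of variables $(Z_1, Z_2) \mapsto (r, \phi)$ defined by~\eqref{eq:polardef}. Since the map is a smooth diffeomorphism from $\R^2 \setminus \{0\}$ onto $\Rp \times S^1$ (after passing to the universal cover of $S^1$), the main content is a calculation; the preliminary point is ensuring polar coordinates are well-defined along a.a.\ trajectories. Because the diffusion matrix $\sigma I$ is non-degenerate, standard non-attainability results for $2$-dimensional diffusions with Lipschitz drift ensure that $\{Z(t)=0\}$ is a.s.\ not reached for positive time, so $r(t)>0$ a.s., and $\phi(t)$ can be defined as the continuous $\R$-valued lift of the angle starting from any fixed $\phi(0)$, then projected to $S^1$.

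For the radial part, I would apply It\^o's formula to $g(x,y)=\sqrt{x^2+y^2}$. The first-order terms give
$$ \frac{Z_1}{r}\,\dd Z_1 + \frac{Z_2}{r}\,\dd Z_2 = \cos(\phi)\,\dd Z_1 + \sin(\phi)\,\dd Z_2,$$
and substituting the drift of~\eqref{sde:main}, the off-diagonal coefficients $\beta, b$ cancel by the antisymmetry of the rotation and shear matrices, leaving the drift $\alpha r - ar^3$. The second-order It\^o correction yields $\tfrac{1}{2}\sigma^2\Delta g(Z) = \sigma^2/(2r)$ from a routine computation of the Laplacian of $\sqrt{x^2+y^2}$. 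The martingale part is manifestly $\sigma[\cos(\phi)\,\dd W_1 + \sin(\phi)\,\dd W_2]$, matching the stated equation for $r(t)$.

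For the angular part, I would apply It\^o's formula to a local lift $h$ of the angle (for instance $h(x,y)=\arctan(y/x)$ on a half-plane, patched appropriately along $\phi(t)$). The clean computational point is that $h$ is harmonic on $\R^2\setminus\{0\}$, so $\partial_{xx}h+\partial_{yy}h=0$ and the It\^o correction vanishes identically. The first-order part is
$$-\frac{Z_2}{r^2}\,\dd Z_1 + \frac{Z_1}{r^2}\,\dd Z_2 = \frac{1}{r}\bigl[-\sin(\phi)\,\dd Z_1 + \cos(\phi)\,\dd Z_2\bigr],$$
and substitution together with $\sin^2\phi+\cos^2\phi=1$ produces the drift $\beta-br^2$ (the $\alpha$ and $a$ contributions cancel as they sit on the radial direction, orthogonal to $(-\sin\phi,\cos\phi)$), as well as the martingale part $(\sigma/r)[-\sin(\phi)\,\dd W_1+\cos(\phi)\,\dd W_2]$.

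Rather than a substantial obstacle, the only care required is bookkeeping: the angle is multi-valued, so one either works with the continuous $\R$-lift and projects at the end, or applies It\^o on each chart of $S^1$ and checks consistency on overlaps. The non-attainability of the origin, which is needed so that the $1/r$ and $1/r^2$ coefficients in both It\^o formulas are well-defined for all $t\geq 0$, is the one analytic input; in principle one could justify it by localising with stopping times $\tau_n=\inf\{t:r(t)\leq 1/n\}$, deriving the SDEs on $[0,\tau_n]$, and then sending $n\to\infty$ using the a.s.\ positivity of $r$.
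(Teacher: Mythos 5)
Your proof is correct, and the calculations verify (the Laplacian of $\sqrt{x^2+y^2}$ is $1/r$, giving the $\sigma^2/(2r)$ correction; the angle function is harmonic, so its It\^o correction vanishes; the drift substitutions produce exactly $\alpha r - ar^3$ and $\beta - br^2$ with the $\beta,b$ resp.~$\alpha,a$ contributions cancelling as you describe). But the route is genuinely different from the paper's. The paper first applies the \emph{Stratonovich} chain rule to derive Stratonovich SDEs for $r$ and $\phi$ (where no second-order correction appears), then separately computes the quadratic covariations $\langle\cos\phi, W_1\rangle$, $\langle\sin\phi, W_2\rangle$, etc.\ to convert to It\^o form by hand. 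Your approach applies It\^o's formula directly and exploits harmonicity of the angle to eliminate the correction term for $\phi$ outright, which is shorter and conceptually tidy; the paper's Stratonovich-first approach has the mild advantage of being the calculus in which the variational equation (Proposition~\ref{prop:variSDE}) is most naturally handled, so the two polar derivations are done in a uniform style. You also explicitly flag the two analytic points the paper leaves implicit (and sources to \cite{deville,DoanEngeletal}): non-attainability of the origin so that $r(t)>0$, and the need to work with a continuous $\R$-lift of the angle before projecting to $S^1$. Both are the right things to worry about and your localisation-by-stopping-times remedy is standard and sound.
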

\begin{proof}
See Appendix \ref{app:polar}. The same derivation can also be found in \cite{deville}.
\end{proof}
We now introduce two real-valued processes $(W_r(t))$ and $(W_\phi(t))$ uniquely defined by $W_r(0) = W_\phi(0) = 0$ and the It\^o-SDEs
\begin{equation*}
    \begin{cases}
    \begin{aligned}
    \dd W_r(t) =& \cos(\phi(t))\dd W_1(t) + \sin(\phi(t)) \dd W_2(t),\\
    \dd W_\phi(t) =&-\sin(\phi(t))\dd W_1(t) + \cos(\phi(t))\dd W_2(t).
\end{aligned}
    \end{cases}
\end{equation*}
Again, using Levy's criterion (see e.g. \cite[Theorem IV.3.6]{RevuzYor}), it can be easily checked that $(W_r(t))$ and $(W_\phi(t))$ are independent Brownian motions. We can now rewrite (\ref{sde:polar1}) as
\begin{equation}
    \begin{cases}
    \begin{aligned} \label{sde:polar2}
    \dd r(t) &= \left(\alpha r(t) - a r(t)^3 + \frac{\sigma^2}{2r(t)}\right) \dd t + \sigma \dd W_r(t)\\
    \dd \phi(t) &= \left(\beta -br(t)^2\right)\dd t +\frac{\sigma}{r(t)}\dd W_\phi(t).
\end{aligned}
    \end{cases}
\end{equation}
We will also conduct a change of coordinates for the variational process $(Y(t))$. In order to simplify the equation, we express  $(Y(t))$ in an orthonormal basis that is adjusted to the polar representation of $Z(t)$ (see Fig. \ref{fig:polarplot}). 
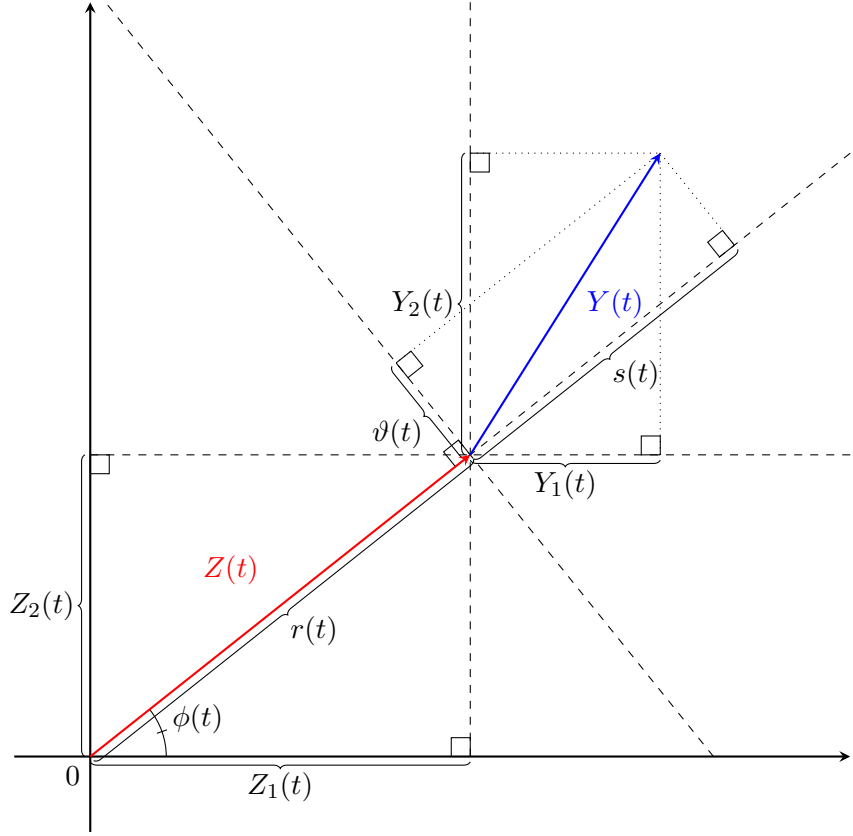
\begin{figure}[t!]
    \centering
    \begin{tikzpicture} 
        \draw[-stealth, thick] (-1,0)--(10,0);
        \draw[-stealth, thick] (0,-1)--(0,10);
        \coordinate[label = below left:$0$] (O) at (0,0);
        \coordinate (Z) at (5,4);
        \coordinate (Zx) at (5,0);
        \coordinate (Zy) at (0,4);

        \draw[dashed] (Zx)--(Z);
        \draw[dashed] (Zy)--(Z);
        \tkzMarkAngle (Zx,O,Z)
        \tkzLabelAngle[pos=1.5](Zx,O,Z){$\phi(t)$}
        
        \coordinate (Y) at (7.5,8);
        \coordinate (Yx) at (7.5,4);
        \coordinate (Yy) at (5,8);
       
        \draw[dotted] (Yx)--(Y);
        \draw[dotted] (Yy)--(Y);
        
        \coordinate (Z2) at (10,8);
        \coordinate (sl1) at (8.2,0);
        \coordinate (sl2) at (0.2,10);
        \coordinate (Zx2) at (5,10);
        \coordinate (Zy2) at (10,4);
        \coordinate (s) at ($(Z)!(Y)!(Z2)$);
        \coordinate (theta) at ($(sl1)!(Y)!(sl2)$);
        \draw[dashed] (sl1)--(sl2);
        \draw[dashed] (Zx2)--(Z);
        \draw[dashed] (Zy2)--(Z);
        \draw[dashed] (Z2)--(Z);
        \draw[dotted] (Y)--(s);
        \draw[dotted] (Y)--(theta);
        
        \tkzMarkRightAngle (Z,Zx,O)
        \tkzMarkRightAngle (O,Zy,Z)
        \tkzMarkRightAngle (sl2,Z,O) 
        \tkzMarkRightAngle (Y,s,Z)
        \tkzMarkRightAngle (Z,Yy,Y)
        \tkzMarkRightAngle (Y,Yx,Z)
        \tkzMarkRightAngle (Z,theta,Y)
        
        \draw[decorate, decoration = {brace,raise=2pt}] (Zx)--(O)
        node [midway, below = 2pt] {$Z_1(t)$};
        \draw[decorate, decoration = {brace,raise=2pt}] (O)--(Zy)
        node [midway, left=2pt] {$Z_2(t)$};
        \draw[decorate, decoration = {brace,raise=2pt}] (Z)--(O)
        node [midway, below right] {$r(t)$};
        \draw[decorate, decoration = {brace,raise=2pt}] (Yx)--(Z)
        node [midway, below = 2pt] {$Y_1(t)$};
        \draw[decorate, decoration = {brace,raise=2pt}] (Z)--(Yy)
        node [midway, left=2pt] {$Y_2(t)$};
        \draw[decorate, decoration = {brace,raise=2pt}] (s)--(Z)
        node [midway, below right] {$s(t)$};
        \draw[decorate, decoration = {brace,raise=2pt}] (Z)--(theta)
        node [midway, below left] {$\vartheta(t)$};
        
        \draw [-stealth, red, thick] (O)--(Z) node[midway,label = above left:{$Z(t)$}] (r){};
        \draw[-stealth, blue, thick] (Z)--(Y)node[midway,label = right:{$Y(t)$}] (r){};
    \end{tikzpicture}
    \caption{Geometric construction of the different coordinates used. The processes $(r(t))$ and $(\phi(t))$ are the usual polar coordinates for $(Z(t))$. Formally, the variational process $(Y(t))$ has values $Y(t) \in T_{Z(t)}\R^2 \simeq \R^2$, where $T\R^2$ denotes the (trivial) tangent bundle of $\R^2$. Now the pair $(s(t), \vartheta(t))$ expresses the same point in $T_{Z(t)}\R^2$ in a time-dependent orthonormal basis. In particular the process $(s(t))$ describes radial perturbations of $Z(t)$, while $(\vartheta(t))$ describes angular perturbations of Z(t).}
    \label{fig:polarplot}
\end{figure}
For that purpose, we introduce two real-valued processes $(s(t))$ and $(\vartheta(t))$ given by
$$s(t) := \begin{pmatrix}\cos(\phi(t)) \\ \sin(\phi(t))\end{pmatrix}^T Y(t)$$
and
$$\vartheta(t) := \begin{pmatrix}-\sin(\phi(t)) \\ \cos(\phi(t))\end{pmatrix}^T Y(t).$$

Clearly the norm is unchanged, i.e.~we have
$$\|Y(t)\| = \sqrt{Y_1(t)^2+Y_2(t)^2} = \sqrt{s(t)^2  + \vartheta(t)^2},$$
for all $t\geq 0$. Due to the rotational symmetry of our original system (\ref{sde:main}), this basis change also simplifies the stochastic differential equations for the variational process.
\begin{proposition}\label{prop:variSDE}
The processes $(s(t))$ and $(\vartheta(t))$ satisfy the Stratonovich-SDEs
\begin{equation}
    \begin{cases}
    \begin{aligned}\label{sde:varipolar}
    \dd s(t) &= \left(\alpha-3ar(t)^2\right)s(t)\dd t+ \frac{\sigma}{r(t)} \vartheta(t) \circ \dd W_{\phi}(t)\\
    \dd\vartheta(t) &= \left(\alpha- ar(t)^2\right) \vartheta(t) \dd t - 2br(t)^2 s(t)\dd t- \frac{\sigma}{r(t)} s(t) \circ \dd W_\phi(t).
\end{aligned}
    \end{cases}
\end{equation}
\end{proposition}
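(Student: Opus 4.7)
The plan is to derive It\^o SDEs for $(s(t))$ and $(\vartheta(t))$ first, and then convert to Stratonovich form at the end. I would start from the explicit representations $s = \cos(\phi) Y_1 + \sin(\phi) Y_2$ and $\vartheta = -\sin(\phi) Y_1 + \cos(\phi) Y_2$, and apply It\^o's formula. The key simplification is that $(Y(t))$ has zero quadratic variation since the variational equation (\ref{sde:vari}) is a (random) ODE; hence all cross-variations involving $Y_1$ or $Y_2$ vanish, and the only surviving second-order term is $d[\phi]_t = (\sigma/r)^2\, dt$. This immediately gives
\begin{align*}
ds &= \vartheta\, d\phi + \bigl(\cos(\phi)\, dY_1 + \sin(\phi)\, dY_2\bigr) - \tfrac{\sigma^2}{2r^2}\, s\, dt, \\
d\vartheta &= -s\, d\phi + \bigl(-\sin(\phi)\, dY_1 + \cos(\phi)\, dY_2\bigr) - \tfrac{\sigma^2}{2r^2}\, \vartheta\, dt.
\end{align*}

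Next, I would evaluate $e_r^\top \D F(Z) Y$ and $e_\phi^\top \D F(Z) Y$ in the moving orthonormal frame $e_r := (\cos\phi, \sin\phi)^\top$, $e_\phi := (-\sin\phi, \cos\phi)^\top$. Writing $A = \bigl(\begin{smallmatrix}\alpha & -\beta\\ \beta & \alpha\end{smallmatrix}\bigr)$ and $B = \bigl(\begin{smallmatrix}a & -b\\ b & a\end{smallmatrix}\bigr)$, the rotational symmetry yields $A e_r = \alpha e_r + \beta e_\phi$, $A e_\phi = -\beta e_r + \alpha e_\phi$, and the analogous identities for $B$. With $Z = r e_r$ and $Y = s e_r + \vartheta e_\phi$, a direct computation gives
$$e_r^\top \D F(Z) Y = (\alpha - 3ar^2)\, s - (\beta - br^2)\, \vartheta, \qquad e_\phi^\top \D F(Z) Y = (\beta - 3br^2)\, s + (\alpha - ar^2)\, \vartheta.$$
Substituting the It\^o form of $d\phi$ from Proposition~\ref{prop:polarSDE} and collecting terms, the $(\beta - br^2)\vartheta$ contribution from $\vartheta\, d\phi$ cancels against the same contribution from $e_r^\top \D F(Z) Y$, while on the $\vartheta$-side one is left with $(\beta - 3br^2) s - (\beta - br^2) s = -2br^2\, s$. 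Up to this point the It\^o SDEs carry the drifts $(\alpha - 3ar^2) s - \tfrac{\sigma^2}{2r^2} s$ and $(\alpha - ar^2)\vartheta - 2br^2\, s - \tfrac{\sigma^2}{2r^2}\vartheta$, with noise terms $(\sigma\vartheta/r)\, dW_\phi$ and $-(\sigma s/r)\, dW_\phi$ respectively.

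Finally, to pass to Stratonovich form I would compute the It\^o-Stratonovich correction for the vector diffusion coefficient $\Sigma := (\sigma\vartheta/r,\, -\sigma s/r)^\top$ against $W_\phi$. The Jacobian of $\Sigma$ with respect to $(s, \vartheta)$ is the antisymmetric matrix $(\sigma/r)\bigl(\begin{smallmatrix}0 & 1\\ -1 & 0\end{smallmatrix}\bigr)$, producing drift corrections $+\tfrac{\sigma^2}{2r^2}\, s$ and $+\tfrac{\sigma^2}{2r^2}\, \vartheta$ that exactly cancel the It\^o correction terms inherited from $d[\phi]_t$. Derivatives of $\Sigma$ with respect to $r$ and $\phi$ contribute nothing: the $\phi$-derivative vanishes by inspection, and the $r$-derivative gets paired with the $W_\phi$-component of $r$, which is zero since $r$'s noise is driven by the independent Brownian motion $W_r$. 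What remains is exactly the claimed Stratonovich system (\ref{sde:varipolar}).

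The main obstacle I expect is the clean identification of the algebraic cancellations in the second step, which look coincidental at the level of Cartesian components but reflect the commutation of $A$ and $B$ with the rotational action inherent in the $(e_r, e_\phi)$-frame. Once this structure is recognized, the remaining work is mechanical bookkeeping of It\^o versus Stratonovich corrections for a state-dependent diffusion coefficient driven by a Brownian motion $W_\phi$ that is itself built from the Cartesian noises in a state-dependent way.
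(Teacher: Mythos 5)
Your proposal is correct, and the key algebraic identities you quote ($e_r^\top\mathrm{D}F(Z)Y = (\alpha-3ar^2)s-(\beta-br^2)\vartheta$, $e_\phi^\top\mathrm{D}F(Z)Y = (\beta-3br^2)s+(\alpha-ar^2)\vartheta$, the cancellation of the $\tfrac{\sigma^2}{2r^2}$ terms) all check out. However, you take a genuinely different route from the paper. The paper works entirely within Stratonovich calculus: it applies the Stratonovich product rule $\dd s = e_r^\top\circ\dd Y + Y^\top\circ\dd e_r$, substitutes the variational equation for $\dd Y$ and $\dd e_r = e_\phi\circ\dd\phi$ with the Stratonovich form of Proposition~\ref{prop:polarSDE}, and multiplies everything out component-wise in Cartesian coordinates. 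No It\^o correction ever appears; the $\beta$- and $3ar^2$-coefficients emerge directly. You instead pass through It\^o form, tracking the $-\tfrac{\sigma^2}{2r^2}$ corrections from $[\phi]_t$ and then showing that the It\^o-to-Stratonovich conversion for the antisymmetric diffusion Jacobian produces exactly the opposite terms; and you replace the paper's brute-force Cartesian expansion of $\mathrm{D}F(Z)Y$ by the frame computation $Ae_r=\alpha e_r+\beta e_\phi$, $Be_r = a e_r + b e_\phi$, and so on. Your frame-based step is arguably more illuminating than the paper's, since it isolates \emph{why} the coefficients come out the way they do (rotational equivariance), whereas the paper's calculation makes the cancellations look coincidental — as you note yourself. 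The trade-off is that staying in Stratonovich throughout, as the paper does, avoids the round trip through It\^o and the need to verify that the two $\tfrac{\sigma^2}{2r^2}$ terms cancel. One small note for a write-up: where you say ``Derivatives of $\Sigma$ with respect to $r$ and $\phi$ contribute nothing,'' you should also record that the cross-variations $[\phi, Y_i]$ vanish (because $Y$ solves a random ODE, hence has zero quadratic variation), which you do state earlier but is worth keeping near the final correction argument since it is what lets you restrict attention to the $(s,\vartheta)$-Jacobian.
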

\begin{proof}
See Appendix \ref{app:polar}.
\end{proof}
Note that the SDE is given in Stratonovich form, which will be used throughout the rest of the paper whenever it fits the given situation best, as is the case here.
Recall that we are interested in studying the large shear, small noise limit. 
To that end, we consider from now on fixed real parameters $\alpha, \beta, a, b'$ and $\sigma'$, with $\alpha, a$ and $\sigma'$ positive, and set $b := \epsilon^{-1} b'$ and $\sigma = \epsilon \sigma'$ such that we can study the limit $\epsilon \to 0$. The main object of interest for our analysis is the (joint) law of the processes $(s(t))$ and $(\vartheta(t))$ since this will be crucial to compute the Lyapunov exponent via~\eqref{eq:Lyap_Y}. Note that the process $(\phi(t))$ only enters the right-hand side of \eqref{sde:varipolar} indirectly, through $(W_\phi(t))$. 
However, by a similar argument to the one made in Remark \ref{rem:indLaw}, the law of $(s(t), \vartheta(t))$ does only depend on their initial values $(s(0), \vartheta(0))$ and the law of $(r(t))$. Since the equation for $(r(t))$ in (\ref{sde:polar2}) also does not depend on $\phi(t)$, the law of $(s(t), \vartheta(t))$ can be studied via the following ``self-contained'' system of SDEs. 
\begin{equation}
    \begin{cases}
    \begin{aligned}\label{sde:rsvartheta}
    \dd r(t) &= \left(\alpha r(t) - a r(t)^3 + \frac{\epsilon^2\sigma'^2}{2r(t)}\right) \dd t + \epsilon\sigma' \circ \dd W_r(t) \\
    \dd s(t) &= \left(\alpha-3ar(t)^2\right)s(t)\dd t+ \frac{\epsilon\sigma'}{r(t)} \vartheta(t) \circ \dd W_{\phi}(t)\\
    \dd\vartheta(t) &= \left(\alpha- ar(t)^2\right) \vartheta(t) \dd t - 2\epsilon^{-1}b'r(t)^2 s(t)\dd t- \frac{\epsilon\sigma'}{r(t)} s(t) \circ \dd W_\phi(t).
\end{aligned}
    \end{cases}
\end{equation}

At the moment taking a limit $\epsilon \to 0$ is, even formally, not possible due to the presence of a term $\epsilon^{-1}$ in the last equation. To circumvent this problem, we rescale the process $(\vartheta(t))$ by introducing the real-valued process $(\theta(t))$ as $$\theta(t) := \epsilon \vartheta(t) \text{, for all } t \geq 0.$$
This allows us to rewrite (\ref{sde:rsvartheta}) as
\begin{equation}\label{sde:rstheta}
    \begin{cases}
    \begin{aligned}
    \dd r(t) &= \left(\alpha r(t) - a r(t)^3 + \frac{\epsilon^2\sigma'^2}{2r(t)}\right) \dd t + \epsilon\sigma' \circ \dd W_r(t) \\
    \dd s(t) &= \left(\alpha-3ar(t)^2\right)s(t)\dd t+ \frac{\sigma'}{r(t)} \theta(t) \circ \dd W_{\phi}(t)\\
    \dd\theta(t) &= \left(\alpha- ar(t)^2\right) \theta(t) \dd t - 2b'r(t)^2 s(t)\dd t- \frac{\epsilon^2\sigma'}{r(t)} s(t) \circ \dd W_\phi(t).
\end{aligned}
    \end{cases}
\end{equation}
Note that, due to the estimate
\begin{equation}\label{ineq:thetavartheta}
    \epsilon\sqrt{s(t)^2+\vartheta(t)^2} \leq \sqrt{s(t)^2+\theta(t)^2} \leq \sqrt{s(t)^2+\vartheta(t)^2},
\end{equation}
which holds for sufficiently small $\epsilon$, we still have
\begin{align} \label{eq:LyapLim}
    \lambda(\alpha, \beta, a, b, \sigma) :=& \lim_{t \to \infty} \frac{1}{t}\log\left(\|Y(t)\|\right)\nonumber\\
    =& \lim_{t \to \infty} \frac{1}{t}\log\left(\sqrt{s(t)^2+\vartheta(t)^2}\right)\nonumber\\
    =& \lim_{t \to \infty} \frac{1}{t}\log\left(\sqrt{s(t)^2+\theta(t)^2}\right).
\end{align}
In equation~\eqref{sde:rstheta}, we can now set $\epsilon = 0$, at least formally. This leaves us with
\begin{equation*}
    \begin{cases}
    \begin{aligned}
    \dd r(t) &= \left(\alpha r(t) - a r(t)^3 \right) \dd t\\
    \dd s(t) &= \left(\alpha-3ar(t)^2\right)s(t)\dd t+ \frac{\sigma'}{r(t)} \theta(t)\circ \dd W_{\phi}(t)\\
    \dd\theta(t) &= \left(\alpha- ar(t)^2\right) \theta(t) \dd t - 2b'r(t)^2 s(t)\dd t.
\end{aligned}
    \end{cases}
\end{equation*}
In this singular limit, the equation for $r(t)$ is a purely deterministic ODE, which has a globally exponentially stable equilibrium at $\Hat{r}:= \sqrt{\alpha a^{-1}}$. Since we are only interested in the asymptotic behavior as $t$ tends to infinity, we may furthermore formally set $r(t) = \Hat{r}$, yielding
\begin{equation}\label{sde:sdlcAsLim}
    \begin{cases}
    \begin{aligned}
    \dd s(t) &= -2\alpha s(t)\dd t+ \sigma'\sqrt{\frac{a}{\alpha}} \theta(t) \circ \dd W_{\phi}(t),\\
    \dd\theta(t) &= - 2\frac{\alpha b'}{a} s(t)\dd t.
\end{aligned}
    \end{cases}
\end{equation}
Note that after setting $$\alphahat := 2 \alpha,~\sigmahat := \sigma' \sqrt{\frac{a}{\alpha}} \text{ and } \bhat := 2\frac{\alpha b'}{a},$$
equation~\eqref{sde:sdlcAsLim} has the same form as the variational equation \eqref{sde:sdlc} studied in section \ref{sec:sdlc} (note that It\^o and Stratonovich integrals coincide here). This means that the law of the processes $(s(t), \theta(t))$ evolving under (\ref{sde:sdlcAsLim}) and the processes $(\hat s(t), \hat \theta(t))$ evolving under (\ref{sde:sdlc}) are identical. Therefore, by Theorem \ref{theo:LE-sdlc}, the solutions to (\ref{sde:sdlcAsLim}) almost surely satisfy
\begin{equation}\label{eq:sdlcLyapLim}
    \lim_{t \to \infty}\frac{1}{t}\log\left(\sqrt{{s}(t)^2+\theta(t)^2}\right) = \alphahat~ \Psi\left(\frac{\bhat^2\sigmahat^2}{\alphahat^3}\right) = 2\alpha~ \Psi\left(\frac{b'^2\sigma'^2}{2\alpha^2a}\right)
\end{equation}
for all initial conditions $(s(0),\theta(0))\neq (0,0)$. It is thus reasonable to believe that 
$$\lim_{\epsilon \to 0} \lambda(\alpha, \beta, a, \epsilon^{-1}b', \epsilon\sigma') = 2\alpha ~\Psi\left( \frac{b'^2\sigma'^2}{2\alpha^2a} \right)$$
should hold which we will, in fact, prove in the following section.
\begin{remark}
The crucial step in this formal derivation is the coordinate change from $(s,\vartheta)$ to $(s,\theta)$. Since this is a  non-orthonormal change of coordinates of the tangent-bundle $T\mathbb R^2$, it may also be interpreted as a change in the metric tensor, and thereby the geometry of the manifold $\mathbb R^2$. In particular the change $\vartheta \mapsto \theta := \epsilon \vartheta$ has the effect of shortening (assuming $\epsilon<1$) distances in angular direction, i.e. along circles centered at the origin, while preserving distances in radial direction, i.e. along straight lines through the origin. This precisely corresponds to the geometry of a cone. Now, taking the limit $\epsilon \to 0$ has the effect of reducing the angle at the apex of the cone to $0$ or equivalently sending the apex to infinity. Geometrically this results in a cylinder, which is precisely the state space for the simplified model \eqref{sde:ogsdlc}. The singularity at the origin, which arises through the coordinate change for $\epsilon < 1$, foreshadows an obstacle (cf. section \ref{sec:continuity_LEs})  that is dealt with in the rigorous proof presented in the remainder of this paper.
\end{remark}
%The formal derivation given so far is of course not a proof.
%%%%%%%%%%%%%%%%%%%%%%%%%%%%%%%%%%%%%%%%%%%%%%%%%%%%%%%%
\section{Proof of Theorem \ref{theo:main}}
\label{sec:fullproof}
The goal of this final section is to rigorously prove the large shear, small noise limit behaviour, derived formally in the previous section.  Thereby we will establish the main result, Theorem \ref{theo:main}.
A crucial challenge is that, in general, Lyapunov exponents do not depend continuously on the underlying system (see \cite{viana} for a survey). In the case of random systems, however, continuity of Lyapunov exponents can be shown under fairly general conditions, going back to ideas by Young \cite{young_conti}. The main technique is to express the top Lyapunov exponent as an integral against the stationary distribution of an auxiliary process on the projective bundle, by the so-called Furstenberg-Khasminskii formula.

In order to make the dependence on $\epsilon$ more explicit, we denote the processes $(r(t), s(t), \theta(t))$ 
%evolving with parameter $\epsilon$ 
by $(r_\epsilon(t), s_\epsilon(t), \theta_\epsilon(t))$. With this notation, we rewrite equation \eqref{sde:rstheta} as
\begin{equation}\label{sde:eps}
    \begin{cases}
    \begin{aligned}
    \dd r_\epsilon(t) &= \left(\alpha r_\epsilon(t) - a r_\epsilon(t)^3 + \frac{\epsilon^2\sigma'^2}{2r_\epsilon(t)}\right) \dd t + \epsilon\sigma' \circ \dd W_r(t) \\
    \dd \begin{pmatrix}s_\epsilon(t)\\ \theta_\epsilon(t)\end{pmatrix} &= A^{(1)}(r_\epsilon(t), \epsilon)  \begin{pmatrix}s_\epsilon(t)\\ \theta_\epsilon(t)\end{pmatrix}\dd t + A^{(2)}(r_\epsilon(t), \epsilon)  \begin{pmatrix}s_\epsilon(t)\\ \theta_\epsilon(t)\end{pmatrix}\circ \dd W_\phi(t)
\end{aligned}
    \end{cases}
\end{equation}
where the matrix-valued functions $A^{(i)}: \Rp \times \Rpz \to \R^{2\times 2}$, $i=1,2$,  are given by
\begin{align}\label{eq:defiA}
    A^{(1)}(r, \epsilon) :=&\begin{pmatrix}\alpha-3ar^2& 0\\ -2b'r^2&\alpha-ar^2 \end{pmatrix} ,& 
    A^{(2)}(r,\epsilon) :=& \begin{pmatrix}0& \sigma'r^{-1}\\ -\epsilon^2\sigma'r^{-1}&0\end{pmatrix}.
\end{align}
We also rewrite the variational equation \eqref{sde:sdlc} of model~\eqref{sde:ogsdlc} as
\begin{equation}\label{sde:newsdlc}
     \dd \begin{pmatrix}\Hat{s}(t)\\ \Hat{\theta}(t)\end{pmatrix} = A^{(1)}(\Hat{r}, 0)  \begin{pmatrix}\Hat{s}(t)\\ \Hat{\theta}(t)\end{pmatrix}\dd t + A^{(2)}(\Hat{r}, 0)  \begin{pmatrix}\Hat{s}(t)\\ \Hat{\theta}(t)\end{pmatrix}\circ \dd \hat W_3(t).
\end{equation}

\subsection{Auxiliary processes for Furstenberg-Khasminskii formula}
\label{sec:auxiliary_processes}
The Furstenberg-Khasminskii formula is a way to express the top Lyapunov exponent via an integral against the stationary distribution of an induced process on the projective bundle.
We will use the processes $\psi$ and $\Lambda$, introduced generally in the following lemma, to work with such a formula in convenient coordinates.
\begin{lemma}\label{lemm:proj}
Let $(w(t))$ be a semimartingale with values in some open set $D\subset \R$, $B^{(i)}: D \to \R^{2\times 2}$, $i=1,2$, some smooth matrix-valued functions and $(v(t))$ be an $(\R^2\setminus\{0\})$-valued stochastic process satisfying the SDE
$$\dd v(t) = B^{(1)}(w(t)) v(t) \dd t + B^{(2)}(w(t)) v(t) \circ \dd W(t).$$
Consider the processes
\begin{align*}
    \psi(t) &:= 2 \tan^{-1}\left(\frac{v_2(t)}{v_1(t)}\right), & \Lambda(t) &:= \log\left(\sqrt{v_1(t)^2 + v_2(t)^2}\right).
\end{align*}
Then $(\psi(t))$ and $(\Lambda(t))$ satisfy the SDEs
\begin{align*}
      \dd \psi(t) =& p_1(w(t),\psi(t)) \dd t +p_2( w(t),\psi(t)) \circ \dd W(t),\\
      \dd \Lambda(t) =& q_1(w(t),\psi(t)) \dd t +q_2(w(t), \psi(t)) \circ \dd W(t),
\end{align*}
where the functions $p_{i}, q_{i}: D \times S^1 \to \R$, $i=1,2$, are given by
\begin{align*}
    p_i(w, \psi) =& B^{(i)}_{2,1}(w)(1+\cos(\psi))-B^{(i)}_{1,2}(w)(1-\cos(\psi)) + \left(B^{(i)}_{2,2}(w)-B^{(i)}_{1,1}(w)\right)\sin(\psi)\\
    q_i(w, \psi) =& B^{(i)}_{1,1}(w)(1+\cos(\psi))+B^{(i)}_{2,2}(w)(1-\cos(\psi)) + \left(B^{(i)}_{1,2}(w)+B^{(i)}_{2,1}(w)\right)\sin(\psi).
\end{align*}
\end{lemma}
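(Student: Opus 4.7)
The statement is a routine application of the Stratonovich chain rule, which behaves exactly like ordinary calculus and so commutes with smooth change of coordinates. The plan is to view the maps $v \mapsto \psi$ and $v \mapsto \Lambda$ as functions of $(v_1,v_2) \in \R^2 \setminus \{0\}$ and to differentiate them along the Stratonovich SDE for $v$. Because $p_i$ and $q_i$ only involve $\cos\psi$ and $\sin\psi$, the identity is independent of any branch choice of $\tan^{-1}$, so $\psi$ should be interpreted as an $S^1$-valued process (the doubled angle of $v$) and the computation carried out locally in any chart where $\tan^{-1}(v_2/v_1)$ is smooth; the local formulas then patch together.

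First I would compute the gradients of $F(v):=2\tan^{-1}(v_2/v_1)$ and $G(v):=\tfrac{1}{2}\log(v_1^2+v_2^2)$, obtaining
\[
\nabla F(v) = \frac{2}{v_1^2+v_2^2}(-v_2,\,v_1), \qquad \nabla G(v) = \frac{1}{v_1^2+v_2^2}(v_1,\,v_2).
\]
Applying the Stratonovich chain rule to the SDE for $v$, together with the linearity of $B^{(i)}(w)v$ in $v$, gives
\begin{align*}
\dd\psi(t) &= \nabla F(v(t))\cdot B^{(1)}(w(t))v(t)\,\dd t + \nabla F(v(t))\cdot B^{(2)}(w(t))v(t)\circ \dd W(t),\\
\dd\Lambda(t) &= \nabla G(v(t))\cdot B^{(1)}(w(t))v(t)\,\dd t + \nabla G(v(t))\cdot B^{(2)}(w(t))v(t)\circ \dd W(t).
\end{align*}

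To match the claimed drift/diffusion, I would substitute the polar decomposition $v_1 = |v|\cos(\psi/2)$, $v_2 = |v|\sin(\psi/2)$ and use the double-angle identities
\[
\frac{2v_1^2}{v_1^2+v_2^2} = 1+\cos\psi, \quad \frac{2v_2^2}{v_1^2+v_2^2} = 1-\cos\psi, \quad \frac{2v_1 v_2}{v_1^2+v_2^2} = \sin\psi.
\]
Expanding $\nabla F(v)\cdot B^{(i)}(w)v$ yields terms proportional to $v_1^2/|v|^2$, $v_2^2/|v|^2$, and $v_1v_2/|v|^2$, which by the identities above produce precisely the coefficients $B^{(i)}_{2,1}(1+\cos\psi) - B^{(i)}_{1,2}(1-\cos\psi) + (B^{(i)}_{2,2}-B^{(i)}_{1,1})\sin\psi = p_i(w,\psi)$; the extra factor of $2$ in the formulas comes from the coefficient $2$ in the definition of $\psi$. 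The analogous computation for $\nabla G(v)\cdot B^{(i)}(w)v$ yields $q_i(w,\psi)$.

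The only conceptual point deserving attention is the global interpretation of $\psi$: away from the exceptional set $\{v_1 = 0\}$ the expressions above are literal equalities, and extending by the $S^1$-valued angle of $v$ is standard because the coefficients $p_i, q_i$ are smooth functions on $D\times S^1$. Since $(v(t))$ stays in $\R^2\setminus\{0\}$ by hypothesis, there is no further issue. No estimate is needed; the lemma is purely algebraic given the Stratonovich calculus, and I expect no real obstacle beyond careful bookkeeping.
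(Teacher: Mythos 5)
Your approach mirrors the paper's: apply the Stratonovich chain rule to $F(v)=2\tan^{-1}(v_2/v_1)$ and $G(v)=\log\|v\|$, expand $\nabla F\cdot B^{(i)}v$ and $\nabla G\cdot B^{(i)}v$, and rewrite via the double-angle identities, which you state correctly (the paper's appendix actually has typos in these intermediate identities, writing e.g. $1+\cos\psi = v_1^2/\sqrt{v_1^2+v_2^2}$ rather than $2v_1^2/(v_1^2+v_2^2)$). There is, however, a gap in your final step: the assertion that ``the analogous computation for $\nabla G(v)\cdot B^{(i)}(w)v$ yields $q_i$'' does not hold. The cancellation that makes $p_i$ come out cleanly relies on the explicit factor $2$ in $\nabla F=\tfrac{2}{\|v\|^2}(-v_2,v_1)$, which offsets the factor $\tfrac{1}{2}$ hidden in $\tfrac{v_1^2}{\|v\|^2}=\tfrac{1+\cos\psi}{2}$, $\tfrac{v_2^2}{\|v\|^2}=\tfrac{1-\cos\psi}{2}$, $\tfrac{v_1v_2}{\|v\|^2}=\tfrac{\sin\psi}{2}$. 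The gradient $\nabla G=\tfrac{1}{\|v\|^2}(v_1,v_2)$ carries no compensating $2$, so carrying the computation through gives $\dd\Lambda=\tfrac{1}{2}q_1\,\dd t+\tfrac{1}{2}q_2\circ\dd W$, not $q_1\,\dd t+q_2\circ\dd W$. A quick sanity check: with $B^{(1)}=\mathrm{Id}$ and $B^{(2)}=0$ one has $v(t)=e^tv(0)$, hence $\dd\Lambda=\dd t$, whereas the stated $q_1$ evaluates to $2$. So the Lemma as written is off by a factor of two in $q_i$, and the paper's appendix proof reproduces this by applying the wrong intermediate identity in the $\dd\Lambda$ step ($\tfrac{v_1^2}{v_1^2+v_2^2}=1+\cos\psi$ instead of $\tfrac{1+\cos\psi}{2}$). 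Your proposal, carried out faithfully with the correct identities, would expose this. The fix is to replace $q_i$ by $\tfrac{1}{2}q_i$ in the statement (with matching adjustments wherever the lemma is subsequently invoked).
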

\begin{proof}
See appendix \ref{app:polar}.
\end{proof}
\begin{remark}
A similar statement is given in \cite[Equations (5) and (6)]{ImkellerLederer}, with a slightly different parametrization of the projective space $\R \mathbb P^1 \simeq S^1$, leading to different expressions.
\end{remark}
In the spirit of Lemma \ref{lemm:proj} we define for each $\epsilon\geq 0$ an $(\R/2\pi\Z)$-valued process $(\psi_\epsilon(t))$ and an $\Rp$-valued process $(\Lambda_\epsilon(t))$ by
\begin{align*}
    \psi_\epsilon(t) &:= 2 \tan^{-1}\left(\frac{\theta_\epsilon(t)}{s_\epsilon(t)}\right), & \Lambda_\epsilon(t) &:= \log\left(\sqrt{\theta_\epsilon(t)^2 + s_\epsilon(t)^2}\right),
\end{align*}
as well as an $(\R/2\pi\Z)$-valued process $(\hat \psi (t))$ and an $\Rp$-valued process $(\hat \Lambda(t))$ by
\begin{align*}
    \hat \psi(t) &:= 2 \tan^{-1}\left(\frac{\hat \theta(t)}{\hat s(t)}\right), & \hat \Lambda(t) &:= \log\left(\sqrt{\hat\theta(t)^2 + \hat s(t)^2}\right).
\end{align*}
Here, the values of $\psi_\epsilon$ and $\hat \psi$ should be seen as a parametrization of the projective space $\R \mathbb P^1 \simeq S^1$ by $[-\pi, \pi)$.
Indeed, our definition implies
\begin{align*}
    \begin{pmatrix}s_\epsilon(t) \\ \theta_\epsilon(t)\end{pmatrix} &\in \operatorname{Span}\left\{\begin{pmatrix}\cos(\tfrac{1}{2}\psi_\epsilon(t)) \\ \sin(\tfrac{1}{2}\psi_\epsilon(t))\end{pmatrix}\right\},&  \begin{pmatrix}\hat s(t) \\ \hat \theta(t)\end{pmatrix} &\in \operatorname{Span}\left\{\begin{pmatrix}\cos(\tfrac{1}{2}\hat \psi(t)) \\ \sin(\tfrac{1}{2}\hat \psi(t))\end{pmatrix}\right\}.
\end{align*}
The values of $(t^{-1}\Lambda_\epsilon(t))$ and $(t^{-1}\hat\Lambda(t))$ are commonly called finite time Lyapunov exponents in the literature. Note that, by \eqref{eq:LyapLim}, we have 
\begin{equation}\label{eq:FTLElim}
    \lim_{t \to \infty}\frac{1}{t} \Lambda_\epsilon(t) =  \lambda(\alpha, \beta, a, \epsilon^{-1} b', \epsilon \sigma'),
\end{equation}
for each $\epsilon>0$ and, by \eqref{eq:sdlcLyapLim}, we have
$$\lim_{t \to \infty}\frac{1}{t} \hat \Lambda(t)= \lambdahat(\alphahat, \bhat, \sigmahat) = 2\alpha~ \Psi\left(\frac{b'^2\sigma'^2}{2\alpha^2a}\right).$$
Recalling~\eqref{sde:eps} and~\eqref{eq:defiA}, we introduce the functions $g_{i}, h_{i} : \Rp \times S^1 \times \Rpz \to \R$, $i=1,2$, by
\begin{equation}\label{eq:defigihi}
\begin{array}{rl}
    g_i(r,\psi ,\epsilon) :=& A^{(i)}_{2,1}(r,\epsilon)(1+\cos(\psi))-A^{(i)}_{1,2}(r,\epsilon)(1-\cos(\psi)) + (A^{(i)}_{2,2}(r,\epsilon)-A^{(i)}_{1,1}(r,\epsilon))\sin(\psi),\\
    h_i(r,\psi,\epsilon) :=& A^{(i)}_{1,1}(r,\epsilon)(1+\cos(\psi))+A^{(i)}_{2,2}(r,\epsilon)(1-\cos(\psi)) + (A^{(i)}_{1,2}(r,\epsilon)+A^{(i)}_{2,1}(r,\epsilon))\sin(\psi),
    \end{array}
\end{equation}
and the functions $g_3, h_3: \Rp \times S^1 \times \Rpz \to \R$ by 
\begin{align*}
     g_3 (r,\psi, \epsilon):=& g_1( r,\psi,\epsilon) +\frac{1}{2} g_2(r,\psi, \epsilon)\partial_\psi g_2(r,\psi, \epsilon),\nonumber \\
     h_3 (r,\psi, \epsilon):=& h_1(r,\psi, \epsilon) +\frac{1}{2} g_2(r,\psi, \epsilon)\partial_\psi h_2(r,\psi, \epsilon).
\end{align*}
%%%%%%%%%%%%%%
\begin{proposition}\label{prop:fk} The processes $(\psi_\epsilon(t))$ and $(\Lambda_\epsilon(t))$ satisfy the SDEs
     \begin{align}
         \begin{cases}
         \begin{aligned}\label{sde:rpsi}
         \dd r_\epsilon(t) &= \left(\alpha r_\epsilon(t) - a r_\epsilon(t)^3 + \frac{\epsilon^2\sigma'^2}{2r_\epsilon(t)}\right) \dd t + \epsilon\sigma' \dd W_r(t) \\
         \dd \psi_\epsilon(t) &= g_3( r_\epsilon(t), \psi_\epsilon(t),\epsilon) \dd t + g_2(r_\epsilon(t), \psi_\epsilon(t), \epsilon) \dd W_\phi(t),\\
         \end{aligned}
         \end{cases}\\
         \dd \Lambda_\epsilon(t) = h_3(r_\epsilon(t),\psi_\epsilon(t),  \epsilon) \dd t + h_2(r_\epsilon(t), \psi_\epsilon(t), \epsilon) \dd W_\phi(t).~~\,\nonumber
     \end{align}
     The processes $(\hat \psi(t))$ and $(\hat \Lambda(t))$ satisfy the SDEs
     \begin{align}
         \dd \hat \psi(t) &=  g_3(\hat r,\hat \psi(t), 0) \dd t + g_2(\hat r, \hat \psi(t), 0) \dd \hat W_3(t),\label{sde:psihat}\\
         \dd \hat \Lambda(t) &=  h_3(\hat r, \hat \psi(t), 0) \dd t + h_2(\hat r, \hat \psi(t), 0) \dd \hat W_3(t).\nonumber
     \end{align}
\end{proposition}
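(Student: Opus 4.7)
The statement is essentially a two-step bookkeeping exercise that I would attack by first invoking Lemma \ref{lemm:proj} to obtain Stratonovich SDEs for $(\psi_\epsilon, \Lambda_\epsilon)$ and $(\hat\psi, \hat\Lambda)$, and then converting the $\psi$- and $\Lambda$-equations to It\^o form to absorb the correction into $g_3$ and $h_3$. First I would apply Lemma \ref{lemm:proj} to the pair $(s_\epsilon, \theta_\epsilon)$ by taking $w(t) = r_\epsilon(t)$, $D = \Rp$, $W(t) = W_\phi(t)$, and $B^{(i)}(w) = A^{(i)}(w, \epsilon)$. Linearity of equation \eqref{sde:eps} in $(s_\epsilon, \theta_\epsilon)$ guarantees that if $(s_\epsilon(0), \theta_\epsilon(0)) \neq 0$ then $(s_\epsilon(t), \theta_\epsilon(t))$ stays away from the origin almost surely, so the projective coordinate $\psi_\epsilon$ is well defined. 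A direct comparison of \eqref{eq:defigihi} with the formulas for $p_i, q_i$ in Lemma \ref{lemm:proj} shows $g_i(r,\psi,\epsilon) = p_i(r,\psi)$ and $h_i(r,\psi,\epsilon) = q_i(r,\psi)$ for $i=1,2$. The lemma then produces the Stratonovich SDEs
$$d\psi_\epsilon(t) = g_1(r_\epsilon(t), \psi_\epsilon(t), \epsilon)\, dt + g_2(r_\epsilon(t), \psi_\epsilon(t), \epsilon) \circ dW_\phi(t),$$
together with the analogous equation for $\Lambda_\epsilon$.

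The second step is the It\^o-Stratonovich conversion. The diffusion coefficient $g_2(r_\epsilon(t), \psi_\epsilon(t), \epsilon)$ depends on both $r_\epsilon$ and $\psi_\epsilon$, so in principle its quadratic covariation with $W_\phi$ picks up contributions from both $dr_\epsilon$ and $d\psi_\epsilon$. However, $r_\epsilon$ is driven solely by $W_r$, and Levy's criterion (recalled in Section \ref{sec:formal derivation}) ensures $W_r \perp W_\phi$, so $\langle r_\epsilon, W_\phi\rangle_t \equiv 0$; only the $\psi$-derivative survives. Writing the standard correction
$$g_2 \circ dW_\phi = g_2\, dW_\phi + \tfrac{1}{2} g_2(r_\epsilon,\psi_\epsilon,\epsilon)\, \partial_\psi g_2(r_\epsilon,\psi_\epsilon,\epsilon)\, dt$$
and collecting drift terms yields $g_3 = g_1 + \tfrac{1}{2} g_2 \partial_\psi g_2$ as defined in the excerpt. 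Repeating the same calculation with $h_2$ in place of the leading $g_2$ (the inner $g_2$ still comes from $d\psi_\epsilon$) gives $h_3 = h_1 + \tfrac{1}{2} g_2 \partial_\psi h_2$, as required.

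Finally, the equations for $\hat\psi$ and $\hat\Lambda$ are obtained by the same procedure applied to \eqref{sde:newsdlc}, now taking $w(t) \equiv \hat r$ (a deterministic constant), $W(t) = \hat W_3(t)$, and $\epsilon = 0$ throughout. The It\^o-Stratonovich conversion is cleaner here since $\hat r$ carries no noise, so the only contribution is again $\tfrac{1}{2} g_2 \partial_\psi g_2$ (resp. $\tfrac{1}{2} g_2 \partial_\psi h_2$), producing exactly \eqref{sde:psihat} and its $\hat\Lambda$ companion. I do not expect a genuine obstacle in this proof; the only two points deserving care are (i) the non-explosion of $(s_\epsilon, \theta_\epsilon)$ away from the origin, which is automatic from linearity, and (ii) the independence of $W_r$ and $W_\phi$, without which the conversion would acquire an extra $\partial_r g_2$ drift term that would spoil the stated formulas.
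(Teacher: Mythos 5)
Your proof is correct and follows exactly the same route as the paper: apply Lemma~\ref{lemm:proj} with $w(t) = r_\epsilon(t)$ (resp.\ $w \equiv \hat r$) and $W = W_\phi$ (resp.\ $\hat W_3$), then perform the It\^o--Stratonovich conversion, where the only surviving correction term comes from $\partial_\psi$ because $\langle r_\epsilon, W_\phi \rangle = 0$ by the independence of $W_r$ and $W_\phi$. The additional remarks about non-explosion of $(s_\epsilon,\theta_\epsilon)$ away from the origin and the explicit identification $g_i = p_i$, $h_i = q_i$ are left implicit in the paper but are correct.
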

\begin{proof}
The equations follow directly by applying Lemma \ref{lemm:proj} and the It\^o-Statonovish correction formula. Here, we make use of the fact that the quadratic covariation between the semimartingales $(r_\epsilon(t))$ and $(W_\phi(t))$ vanishes since $(r_\epsilon(t))$ is only driven by the Brownian motion $(W_r(t))$, which is independent of $(W_\phi(t))$.
\end{proof}
For technical reasons, we will also have to consider the logarithm of the norm of the variational process in the original coordinates $(s(t),\vartheta(t))$. For that purpose, we rewrite the SDE for $(s(t))$ and $(\vartheta(t))$ given in (\ref{sde:rsvartheta}) as
\begin{equation}\label{sde:tildeeps}
    \dd \begin{pmatrix}s_\epsilon(t)\\ \vartheta_\epsilon(t)\end{pmatrix} = \Tilde A^{(1)}(r_\epsilon(t), \epsilon)  \begin{pmatrix}s_\epsilon(t)\\ \vartheta_\epsilon(t)\end{pmatrix}\dd t + \Tilde A^{(2)}(r_\epsilon(t), \epsilon)  \begin{pmatrix}s_\epsilon(t)\\ \vartheta_\epsilon(t)\end{pmatrix}\circ \dd W_\phi(t),
\end{equation}
where the matrix-valued functions $\Tilde A^{(i)}: \Rp \times \Rp \to \R^{2\times 2}$, $i=1,2$,  are given by
\begin{align}\label{eq:defiAtilde}
    \Tilde A^{(1)}(r, \epsilon) :=&\begin{pmatrix}\alpha-3ar^2& 0\\ -2\epsilon^{-1}b'r^2&\alpha-ar^2 \end{pmatrix} ,& 
    \Tilde A^{(2)}(r,\epsilon) :=& \begin{pmatrix}0& \epsilon\sigma'r^{-1}\\ -\epsilon\sigma'r^{-1}&0\end{pmatrix}.
\end{align}
\begin{remark}\label{rem:epszero}
Note that while the SDE \eqref{sde:eps} is well-defined for $\epsilon\geq 0$, the SDE \eqref{sde:tildeeps} is only well-defined for $\epsilon>0$. This is precisly the motivation for mainly considering the coordinates $(s_\epsilon, \theta_\epsilon)$, rather than $(s_\epsilon, \vartheta_\epsilon)$.
\end{remark}
Similarly to our previous definitions, we set
\begin{align*}
    \Tilde\psi_\epsilon(t) &:= 2 \tan^{-1}\left(\frac{\vartheta_\epsilon(t)}{s_\epsilon(t)}\right), & \Tilde\Lambda_\epsilon(t) &:= \log\left(\sqrt{\vartheta_\epsilon(t)^2 + s_\epsilon(t)^2}\right).
\end{align*}
Note that (\ref{ineq:thetavartheta}) implies
\begin{equation}\label{ineq:tilde}
    |\Tilde \Lambda_\epsilon(t) - \Lambda_\epsilon(t)| \leq -\log(\epsilon),
\end{equation}
for sufficiently small $\epsilon>0$.
In addition, we define the functions $\Tilde g_{i}, \Tilde h_{i} : S^1 \times \Rp \times \Rp \to \R$, $i=1,2$, by
\begin{align}\label{eq:defigihitilde}
    \Tilde g_i(\psi, r,\epsilon) :=& \Tilde A^{(i)}_{2,1}(r,\epsilon)(1+\cos(\psi))-\Tilde A^{(i)}_{1,2}(r,\epsilon)(1-\cos(\psi)) + (\Tilde A^{(i)}_{2,2}(r,\epsilon)- \Tilde A^{(i)}_{1,1}(r,\epsilon))\sin(\psi)\nonumber,\\
    \Tilde h_i(\psi, r,\epsilon) :=& \Tilde A^{(i)}_{1,1}(r,\epsilon)(1+\cos(\psi))+\Tilde A^{(i)}_{2,2}(r,\epsilon)(1-\cos(\psi)) + (\Tilde A^{(i)}_{1,2}(r,\epsilon)+\Tilde A^{(i)}_{2,1}(r,\epsilon))\sin(\psi),
\end{align}
and a function $\Tilde g_3: S^1 \times \Rp \times \Rp \to \R$ by 
\begin{align*}
     \Tilde g_3 (r,\psi, \epsilon):= \Tilde g_1(r,\psi, \epsilon) +\frac{1}{2} \Tilde g_2(r,\psi, \epsilon)\partial_\psi \Tilde g_2(r,\psi,\epsilon).
\end{align*}
\begin{remark}
Note that we have $\Tilde h_2(r,\psi,\epsilon) = 0$, for all $r$,$\psi$ and $\epsilon$. This is precisely the case if and only if $\Tilde A^{(2)}(r, \epsilon) \in \mathfrak{so}(2)$, where $\mathfrak{so}(2)$ denotes the Lie-algebra of the Lie-group $SO(2)$ and is explicitly given by 
$$\mathfrak{so}(2) = \left\{\begin{pmatrix}0&t\\-t&0\end{pmatrix}: t \in \mathbb R\right\}.$$
Since $\Tilde{h}_2 = 0$, it is not necessary to define a function $\Tilde h_3$.
\end{remark}
Now, completely analogous to Proposition \ref{prop:fk}, we can obtain the following SDEs for $(\Tilde \psi_\epsilon)$ and $(\Tilde \Lambda_\epsilon(t))$.
\begin{proposition}\label{prop:fktilde}
 The processes $(\Tilde\psi_\epsilon(t))$ and $(\Tilde\Lambda_\epsilon(t))$ satisfy the SDEs
     \begin{align*}
         \begin{cases}
         \begin{aligned}
         \dd r_\epsilon(t) &= \left(\alpha r_\epsilon(t) - a r_\epsilon(t)^3 + \frac{\epsilon^2\sigma'^2}{2r_\epsilon(t)}\right) \dd t + \epsilon\sigma' \dd W_r(t) \\
         \dd \Tilde\psi_\epsilon(t) &= \Tilde g_3(r_\epsilon(t), \Tilde \psi_\epsilon(t), \epsilon) \dd t + \Tilde g_2(r_\epsilon(t), \Tilde \psi_\epsilon(t), \epsilon) \dd W_\phi(t),
         \end{aligned}
         \end{cases}\\
         \dd \Tilde \Lambda_\epsilon(t) = \Tilde h_1(r_\epsilon(t), \psi_\epsilon(t), \epsilon) \dd t.~~~~~~~~~~~~~~~~~~~~~~~~~~~~~~~~~~~
     \end{align*}
\end{proposition}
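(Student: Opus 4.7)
The plan is to apply Lemma \ref{lemm:proj} essentially mechanically to \eqref{sde:tildeeps}, with the substitutions $w(t) = r_\epsilon(t)$, $v(t) = (s_\epsilon(t), \vartheta_\epsilon(t))^\top$, $W(t) = W_\phi(t)$, and $B^{(i)} = \Tilde A^{(i)}(r,\epsilon)$. By comparing the formulas for $p_i, q_i$ in Lemma \ref{lemm:proj} with the definitions \eqref{eq:defigihitilde} of $\Tilde g_i, \Tilde h_i$, one reads off that the lemma produces the Stratonovich SDEs
\begin{align*}
    \dd \Tilde\psi_\epsilon(t) &= \Tilde g_1(r_\epsilon(t), \Tilde\psi_\epsilon(t), \epsilon)\, \dd t + \Tilde g_2(r_\epsilon(t), \Tilde\psi_\epsilon(t), \epsilon) \circ \dd W_\phi(t), \\
    \dd \Tilde\Lambda_\epsilon(t) &= \Tilde h_1(r_\epsilon(t), \Tilde\psi_\epsilon(t), \epsilon)\, \dd t + \Tilde h_2(r_\epsilon(t), \Tilde\psi_\epsilon(t), \epsilon) \circ \dd W_\phi(t).
\end{align*}

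Next I would convert these to It\^o form, as in the proof of Proposition \ref{prop:fk}. Because $r_\epsilon$ is driven only by $W_r$, which is independent of $W_\phi$, the quadratic covariation of $r_\epsilon$ and $W_\phi$ vanishes, and the Stratonovich correction involves only the $\psi$-derivative of the diffusion coefficient. For $\Tilde\psi_\epsilon$ this produces the drift correction $\tfrac{1}{2}\Tilde g_2 \partial_\psi \Tilde g_2$, which added to $\Tilde g_1$ is exactly $\Tilde g_3$ by definition, matching the claimed equation.

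For $\Tilde\Lambda_\epsilon$, the decisive algebraic input is the observation in the remark preceding the statement: since $\Tilde A^{(2)}(r,\epsilon) \in \mathfrak{so}(2)$, a direct substitution into \eqref{eq:defigihitilde} gives $\Tilde h_2 \equiv 0$. This simultaneously annihilates the martingale term $\Tilde h_2 \circ \dd W_\phi$ and the It\^o correction $\tfrac{1}{2}\Tilde g_2 \partial_\psi \Tilde h_2$, leaving the pure drift $\dd \Tilde\Lambda_\epsilon = \Tilde h_1\, \dd t$. Finally, the first line of the displayed system is reproduced verbatim from \eqref{sde:eps}; since the diffusion coefficient $\epsilon\sigma'$ is constant in $r_\epsilon$, the It\^o and Stratonovich forms coincide and no further conversion is needed.

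I do not expect a genuine obstacle: the proof is a line-by-line parallel of Proposition \ref{prop:fk}, with the only qualitative difference being the collapse of the $\Tilde\Lambda_\epsilon$-equation to a pure-drift SDE, which is handled by the $SO(2)$-structure of the noise matrix $\Tilde A^{(2)}$. The only bookkeeping care needed is in checking that the Stratonovich-It\^o correction genuinely contains no cross term in $r_\epsilon$, which is immediate from the independence of $W_r$ and $W_\phi$.
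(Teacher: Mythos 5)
Your proposal is correct and takes exactly the approach the paper intends: the paper simply declares Proposition~\ref{prop:fktilde} to be ``completely analogous to Proposition~\ref{prop:fk},'' whose proof is a one-line invocation of Lemma~\ref{lemm:proj} plus the It\^o--Stratonovich correction, using that $r_\epsilon$ is driven by $W_r$ alone (independent of $W_\phi$) so the cross-covariation vanishes. Your write-up just spells out those steps, correctly identifying that the key algebraic simplification is $\Tilde h_2\equiv 0$ (from $\Tilde A^{(2)}\in\mathfrak{so}(2)$), which kills both the martingale term and the would-be correction $\tfrac12\Tilde g_2\partial_\psi\Tilde h_2$, and that the $r_\epsilon$-equation needs no Stratonovich-to-It\^o conversion since its diffusion coefficient $\epsilon\sigma'$ is constant.
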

The precise shape of the functions $\Tilde h_1$ and $h_{i}, g_{i},, \Tilde g_{i}$, $i=1,2,3$, will not be important for the remainder of this section. Instead, we will only deploy their continuity and certain bounds they satisfy. 
We will use the notation ''$\bullet \lesssim \circ$``, meaning ''There exists a constant C, possibly depending on $\alpha, \beta, a, b'$ and $\sigma'$, but not on $r, \psi$ or $\epsilon$, such that $\bullet \leq C \circ$ holds``. 
The relevant bounds can be formulated in the following way.
\begin{lemma}\label{lemm:bounds}
The following estimates hold.
\begin{enumerate}
    \item[i)] $|h_2(r,\psi, \epsilon)| \lesssim 1+\epsilon^{-2}r^{-1}.$
    \item[ii)] $|h_3(r,\psi,  \epsilon)|\lesssim 1+r^2+\epsilon^{-4}r^{-2}$.
    \item[iii)] $|\Tilde h_1(r,\psi, \epsilon)|\lesssim 1+\epsilon^{-1}r^2.$
\end{enumerate}
\end{lemma}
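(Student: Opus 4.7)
My plan is to prove each of the three bounds by direct computation: substitute the explicit matrix entries from~\eqref{eq:defiA} and~\eqref{eq:defiAtilde} into the defining formulae~\eqref{eq:defigihi} and~\eqref{eq:defigihitilde}, then bound the trigonometric factors $(1\pm\cos\psi)$ and $\sin\psi$ by~$2$. There is no analytic subtlety here; the bounds are deliberately crude and only track which negative powers of $\epsilon$ and $r$ can appear.

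For (i), I would observe that $A^{(2)}$ has zero diagonal, so only the $\sin\psi$-term in the definition of $h_2$ survives, giving
\[
h_2(r,\psi,\epsilon) \;=\; \bigl(A^{(2)}_{1,2}(r,\epsilon)+A^{(2)}_{2,1}(r,\epsilon)\bigr)\sin\psi \;=\; \sigma'\,r^{-1}(1-\epsilon^2)\sin\psi.
\]
Hence $|h_2(r,\psi,\epsilon)|\leq \sigma'\,r^{-1}(1+\epsilon^2)\lesssim 1+\epsilon^{-2}r^{-1}$ (crude, but enough).

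For (ii), the same expansion applied to $A^{(1)}$ yields
\[
h_1(r,\psi,\epsilon) \;=\; (\alpha-3ar^2)(1+\cos\psi)+(\alpha-ar^2)(1-\cos\psi) - 2b'r^2\sin\psi,
\]
so $|h_1|\lesssim 1+r^2$. A similar computation gives $g_2(r,\psi,\epsilon) = -\sigma'r^{-1}\bigl((1-\cos\psi)+\epsilon^2(1+\cos\psi)\bigr)$ and $\partial_\psi h_2(r,\psi,\epsilon) = \sigma'r^{-1}(1-\epsilon^2)\cos\psi$. Bounding the trigonometric factors by constants and using $(1+\epsilon^2)\lesssim 1$ for $\epsilon$ bounded (and otherwise absorbing $\epsilon^2$ into the $\epsilon^{-4}$), one gets $|g_2\partial_\psi h_2|\lesssim r^{-2}$. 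Adding the contributions, $|h_3|\leq |h_1|+\tfrac12|g_2\partial_\psi h_2|\lesssim 1+r^2+r^{-2}\lesssim 1+r^2+\epsilon^{-4}r^{-2}$.

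For (iii), I substitute the entries of $\tilde A^{(1)}$, whose only change relative to $A^{(1)}$ is the $(2,1)$-entry getting an extra factor $\epsilon^{-1}$. The same identity as for $h_1$ gives
\[
\tilde h_1(r,\psi,\epsilon) \;=\; (\alpha-3ar^2)(1+\cos\psi)+(\alpha-ar^2)(1-\cos\psi) - 2\epsilon^{-1}b'r^2\sin\psi,
\]
so $|\tilde h_1|\lesssim 1+r^2+\epsilon^{-1}r^2\lesssim 1+\epsilon^{-1}r^2$. No step here poses any real obstacle: the entire lemma is a bookkeeping exercise to make the singular behaviour in $r$ and $\epsilon$ explicit, for use in the concentration and tightness arguments of the following subsection.
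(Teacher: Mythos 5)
Your proof is correct and follows essentially the same route as the paper: substitute the entries of $A^{(1)},A^{(2)},\Tilde A^{(1)}$ into the defining expressions for $h_1,h_2,g_2,\Tilde h_1$, bound $1\pm\cos\psi$ and $\sin\psi$ by constants, and add the pieces for $h_3$. One small remark: by computing $g_2$ and $\partial_\psi h_2$ explicitly you obtain $|g_2\,\partial_\psi h_2|\lesssim r^{-2}$ for $\epsilon$ bounded, which is strictly stronger than the paper's intermediate estimate $\bigl(1+\epsilon^{-2}r^{-1}\bigr)^2$; the paper's version arises because they first bound each matrix uniformly by $\|A^{(2)}\|\lesssim 1+\epsilon^{-2}r^{-1}$ and never simplify the product. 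Both suffice for the stated bound in (ii) since $r^{-2}\le\epsilon^{-4}r^{-2}$ for $\epsilon\le 1$, and both implicitly restrict to bounded $\epsilon$ (as does the lemma itself, whose conclusion would otherwise fail as $\epsilon\to\infty$).
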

\begin{proof}
From the definitions (\ref{eq:defiA}) and (\ref{eq:defiAtilde}) we get the estimates
\begin{align*}
    \|A^{(1)}(r,\epsilon)\| &\lesssim 1+r^2,&
    \|A^{(2)}(r,\epsilon)\| &\lesssim 1+\epsilon^{-2}r^{-1},&
    \|\Tilde A^{(1)}(r,\epsilon)\| &\lesssim 1+\epsilon^{-1}r^2.
\end{align*}
Here, it is most convenient to think of $\|\cdot\|$ as the maximums-norm on $\mathbb R^{2\times 2}$; however, all statements hold independently of the norm chosen. Plugging this into (\ref{eq:defigihi}) and (\ref{eq:defigihitilde}) respectively yields
\begin{align*}
    |h_1(r,\psi, \epsilon)| &\lesssim 1+r^2, & |g_2(r,\psi, \epsilon)| &\lesssim 1+\epsilon^{-2}r^{-1},&
    |h_2(r,\psi, \epsilon)| &\lesssim 1+\epsilon^{-2}r^{-1},\\
    |\partial_\psi h_2(r,\psi, \epsilon)| &\lesssim 1+\epsilon^{-2}r^{-1}, &
    |\Tilde h_1(r,\psi, \epsilon)| &\lesssim 1+\epsilon^{-1}r^2.
\end{align*}
Finally, we obtain
\begin{align*}
    |h_3(r,\psi, \epsilon)|&\leq |h_1(r,\psi, \epsilon)|+|g_2(r,\psi, \epsilon)||\partial_\psi h_2(r,\psi, \epsilon)|\\
    &\lesssim 1 + r^2 + \left(1+\epsilon^{-2}r^{-1}\right)^2\\
    &\lesssim 1+r^2+\epsilon^{-4}r^{-2}.
\end{align*}
This finishes the proof.
\end{proof}
\subsection{Stationary measures}
\label{sec:stationarymeas}
We briefly introduce some notation for the following: Let $\mathbb X$ be a topological space. We will denote by $\mathcal B_0(\mathbb X)$ the Banach space of bounded measurable functions on $\mathbb X$ and by $\mathcal C_0(\mathbb X)$ the Banach space of bounded continuous functions on $\mathbb X$, both equipped with the usual supremum norm. Furthermore, we will consider the set of probability measures $\mathcal M_1(\mathbb X)$ on $\mathbb X$, and  the push-forward $m_* : \mathcal M_1(\mathbb X) \to \mathcal M_1(\mathbb Y)$ associated to some map $m: \mathbb X \to \mathbb Y$. We say that a sequence of measures $(\nu_n)_{n \in \N}$ in $\mathcal M_1(\mathbb X)$ converges weakly to some measure $\nu \in \mathcal M_1(\mathbb X)$, if for each $\eta \in \mathcal C_0(\mathbb X)$ we have
$$\lim_{n \to \infty} \int_{\mathbb X} \eta(x) \nu_n(\dd x) = \int_{\mathbb X} \eta(x) \nu(\dd x).$$
The SDE for $(r_\epsilon(t),\psi_\epsilon(t))$ (cf.~equation~\eqref{sde:rpsi}) induces a Markov process on $\Rp\times S^1$, for each $\epsilon\geq 0$. We will denote the time-1 transition operator of this Markov-process by $\mathcal P_\epsilon: \mathcal B_0(\Rp\times S^1) \to\mathcal B_0(\Rp\times S^1)$, i.e.
$$\mathcal P_\epsilon\eta (r,\psi) := \mathbb E_{r_\epsilon(0) = r,~\psi_\epsilon(0) = \psi}\left[\eta(r_\epsilon(1),\psi_\epsilon(1))\right].$$
We denote its dual operator by $\mathcal P^*_\epsilon: \mathcal M_1(\Rp\times S^1 ) \to\mathcal M_1(\Rp\times S^1)$.
Analogously, we introduce the time-1 transition operator $\hat {\mathcal P}: \mathcal B_0(S^1) \to\mathcal B_0(S^1)$ of the Markov process induced by the SDE for $\hat \psi$ (cf.~equation~\eqref{sde:psihat}), and its dual $\hat {\mathcal P}^*: \mathcal M_1(S^1) \to\mathcal M_1(S^1)$. 
An operator $\mathcal P: \mathcal B_0(\mathbb X) \to \mathcal B_0(\mathbb X)$ has the Feller property, if it can be restricted to an operator $\mathcal P: \mathcal C_0(\mathbb X) \to \mathcal C_0(\mathbb X)$.
\begin{lemma}\label{lemm:Pphi}The following hold.
\begin{enumerate}
    \item[i)] Let $\eta \in \mathcal C_0 ( \Rp\times S^1)$. Then the map $\mathcal P_\bullet \eta: (r, \psi, \epsilon) \mapsto \left(\mathcal P_\epsilon \eta\right)(r, \psi)$ is in $C_0(\Rp \times S^1 \times \Rpz)$.
    \item[ii)] For each $\epsilon \geq 0$, the operator $\mathcal P_\epsilon: \mathcal B_0(\Rp \times S^1) \to \mathcal B_0(\Rp \times S^1)$ has the Feller property.
    \item[iii)] The operator $\hat {\mathcal P}: \mathcal B_0(S^1) \to \mathcal B_0(S^1)$ has the Feller property.
\end{enumerate}
\end{lemma}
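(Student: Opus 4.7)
The plan is to establish joint continuous dependence of solutions to \eqref{sde:rpsi} on the initial condition $(r, \psi)$ and the parameter $\epsilon \in \Rpz$, and then deduce (i) (and with it (ii)) by bounded convergence; assertion (iii) is then a simpler analogue on the compact state space $S^1$. Well-posedness on $\Rp \times S^1$ is inherited from Section~\ref{sec:formal derivation}: for $\epsilon > 0$, $r_\epsilon$ is the polar radius of a non-degenerate two-dimensional diffusion and thus stays strictly positive almost surely (points are polar in $\R^2$), while for $\epsilon = 0$ the $r$-equation reduces to $\dot r = \alpha r - a r^3$, whose solutions stay in $\Rp$ and converge to $\hat r = \sqrt{\alpha/a}$. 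Boundedness of $\mathcal P_\bullet \eta$ is immediate from $|\mathcal P_\epsilon \eta| \leq \|\eta\|_\infty$, so in every part the content is continuity.

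The first step is a tightness estimate that pins $r_\epsilon$ away from $0$ and from $\infty$, uniformly over initial conditions $(r_0', \psi_0')$ in a neighbourhood of any fixed $(r_0, \psi_0) \in \Rp \times S^1$ and over $\epsilon$ in a neighbourhood of any fixed $\epsilon_\infty \in \Rpz$. The upper bound comes from the cubic dissipation $-a r^3$ together with standard moment estimates on $r_\epsilon^2$ via Burkholder--Davis--Gundy and Gronwall. For the lower bound, one exploits that as the polar radius of a non-degenerate two-dimensional diffusion $r_\epsilon$ avoids small neighbourhoods of the origin on $[0,1]$ with high probability when $\epsilon$ is bounded away from $0$, while for $\epsilon$ near $0$ the solution shadows the deterministic orbit, which is uniformly bounded away from $0$ on $[0,1]$. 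Combining the two regimes yields a compact $K = [a,b] \subset \Rp$ with
\begin{equation*}
    \mathbb P\bigl(r_\epsilon(t) \in K \text{ for all } t \in [0,1]\bigr) \geq 1 - \delta,
\end{equation*}
uniformly in the relevant neighbourhoods of $(r_0, \psi_0, \epsilon_\infty)$.

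The second step is classical continuous dependence on the compact set $K$. On the event that $r_\epsilon$ stays inside $K$ on $[0,1]$, the coefficients of \eqref{sde:rpsi} are jointly continuous in $(r,\psi,\epsilon)$ and uniformly Lipschitz in $(r, \psi)$ with constants depending only on $K$ and a neighbourhood of $\epsilon_\infty$. For a convergent sequence $(r_n, \psi_n, \epsilon_n) \to (r_\infty, \psi_\infty, \epsilon_\infty)$, applying Burkholder--Davis--Gundy and Gronwall's lemma to the stopped difference process yields $L^2$ convergence of the stopped trajectories, and hence convergence in probability of $(r_{\epsilon_n}(1), \psi_{\epsilon_n}(1))$ to $(r_{\epsilon_\infty}(1), \psi_{\epsilon_\infty}(1))$. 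For $\eta \in \mathcal C_0(\Rp \times S^1)$ the bounded convergence theorem then gives $\mathcal P_{\epsilon_n} \eta(r_n, \psi_n) \to \mathcal P_{\epsilon_\infty} \eta(r_\infty, \psi_\infty)$, which is exactly (i); assertion (ii) is the restriction to constant $\epsilon_n \equiv \epsilon_\infty$. For (iii), the coefficients of \eqref{sde:psihat} are smooth on the compact state space $S^1$ because $\hat r$ is a fixed constant, so continuous dependence on initial data is classical and the same bounded-convergence argument yields the Feller property.

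The principal technical difficulty is the uniform radial lower bound in the singular limit $\epsilon \to 0$, in which both the diffusion coefficient $\epsilon \sigma'$ and the It\^o correction $\epsilon^2 \sigma'^2/(2r)$ degenerate simultaneously. This is nevertheless a regular singular limit: the limiting flow is a smooth ODE with a globally attracting equilibrium on $\Rp$, so a standard small-noise shadowing estimate on a bounded time interval transfers the deterministic lower bound to the diffusion uniformly in $\epsilon \in [0, \epsilon_0]$, which is what makes the tightness step of the plan work across $\epsilon_\infty = 0$.
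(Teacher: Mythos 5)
Your strategy is sound and reaches the same conclusion, but by a genuinely different route than the paper. Both arguments hinge on localizing $r_\epsilon$ to a compact interval $K \subset \Rp$ so that the coefficients of \eqref{sde:rpsi} become bounded and Lipschitz, and both treat (ii) as the restriction of (i) to constant $\epsilon$ and (iii) as the easy compact-state-space case via classical continuous dependence. Where you diverge: the paper augments the SDE by the trivial equation $\dd\epsilon(t) = 0$, stops at exit from $[n^{-1}, n]$, and then invokes the Kolmogorov-type continuous-modification theorem (\cite[Theorem IX.2.4]{RevuzYor}) to produce, for each $n$, a version of the stopped flow that is \emph{almost surely jointly continuous} in $(r_0, \psi_0, \epsilon_0)$; it then patches these together by observing that, restricted to any compact set of initial data, the stopped sequence is a.s.\ eventually constant, and finishes with dominated convergence. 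You instead prove a \emph{uniform-in-initial-condition, uniform-in-$\epsilon$} tightness bound $\pr(r_\epsilon(t) \in K,\ \forall t \in [0,1]) \geq 1 - \delta$ on a neighbourhood of $(r_0, \psi_0, \epsilon_\infty)$, run a Gronwall/BDG estimate on the stopped difference to get convergence in probability of $(r_{\epsilon_n}(1), \psi_{\epsilon_n}(1))$, and conclude by bounded convergence. The trade-off is real: the paper's route sidesteps the need for any quantitative uniform lower bound on $r_\epsilon$ near the origin (it only ever uses that, for each \emph{fixed} initial condition, the exit times $\tau_n$ are a.s.\ eventually $> 1$), at the cost of invoking the continuous-modification machinery; your route is more elementary in its continuity step (just Gronwall) but shifts the entire burden onto the uniform tightness estimate. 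You correctly flag that the hard case of that estimate is the uniform lower bound on $r_\epsilon$ across $\epsilon \to 0$; your proposed small-noise shadowing of the ODE $\dot r = \alpha r - a r^3$ on $[0,1]$ is a workable argument (the diffusion coefficient $\epsilon\sigma'$ and the singular drift $\epsilon^2\sigma'^2/(2r)$ both vanish, so on a fixed compact time horizon the solution tracks the deterministic orbit uniformly in $\epsilon \in [0,\epsilon_0]$), but it is the part of your plan that would need to be carried out carefully to complete the proof — and it is precisely the work the paper's approach avoids having to quantify.
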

\begin{proof}
i): The boundedness of $\mathcal P_\bullet \eta$ follows directly from the boundedness of $\eta$. Thus, we only have to show continuity. Without loss of generality, we fix some $\epsilon^* > 0$ and henceforth only consider $\epsilon \in [0, \epsilon^*]$. Next we amend the SDE \eqref{sde:rpsi} by $\dd \epsilon(t) = 0$, i.e. we consider the SDE
\begin{align}
    \begin{aligned}
    \begin{cases} \label{sde:amended}
    \dd r'(t) &= \left(\alpha r'(t) - a r'(t)^3 + \frac{\epsilon'(t)^2\sigma'^2}{2r'(t)}\right) \dd t + \epsilon'(t)\sigma' \dd W'_1(t) \\
         \dd \psi'(t) &= g_3(r'(t),\psi'(t),  \epsilon'(t)) \dd t + g_2(r'(t),\psi'(t),  \epsilon'(t)) \dd W'_2(t)\\
         \dd \epsilon'(t) &= 0
    \end{cases}
    \end{aligned}
\end{align}
on the state space $(r',\psi',  \epsilon') \in \Rp \times S^1 \times [0, \epsilon^*]$, where $(W'_1(t), W'_2(t))$ is a two-dimensional Brownian motion.
Since $(r'(t))$ has the same law as $(r_{\epsilon_0}(t))$ with $\epsilon_0 = \epsilon'(0)$, there is almost surely no blow-up in finite time and global (in time) solutions of \eqref{sde:amended} exist.
Let
\begin{equation}\label{sp:prime}
    (r'_{r_0,\psi_0,  \epsilon_0}(t), \psi'_{r_0,\psi_0,  \epsilon_0}(t),  \epsilon'_{r_0,\psi_0,  \epsilon_0}(t))_{(r_0,\psi_0,  \epsilon_0) \in \Rp \times S^1 \times [0, \epsilon^*],~t \in [0,1]},
\end{equation}
be a solution to the SDE \eqref{sde:amended} with
$$(r'_{r_0, \psi_0, \epsilon_0}(0),\psi'_{r_0, \psi_0, \epsilon_0}(0),  \epsilon'_{r_0, \psi_0, \epsilon_0}(0)) = (r_0, \psi_0, \epsilon_0).$$
We want to show that the process \eqref{sp:prime} has a modification which is continuous in $r_0$, $\psi_0$ and $\epsilon_0$. For each $n \in \mathbb N$ we define the waiting time $\tau_n(r_0,\psi_0,  \epsilon_0)$ by 
$$\tau_n(r_0,\psi_0,  \epsilon_0) := \inf \left\{t \in [0,1]:r'_{r_0, \psi_0, \epsilon_0}(t) \notin \left[n^{-1}, n\right]\right\}$$
and consider the stopped process 
\begin{equation}\label{sp:stopped}
    (r_{r_0,\psi_0,  \epsilon_0}^{(n)}(t), \psi_{r_0,\psi_0,  \epsilon_0}^{(n)}(t),  \epsilon_{r_0,\psi_0,  \epsilon_0}^{(n)}(t)) := (r'_{r_0,\psi_0,  \epsilon_0}, \psi'_{r_0,\psi_0,  \epsilon_0},  \epsilon'_{r_0,\psi_0,  \epsilon_0})(t \wedge \tau_n(r_0,\psi_0,  \epsilon_0)).
\end{equation}
Note that we have, for any $n, m \in \mathbb N$,
\begin{equation}\label{eq:stopped}
    (r_{r_0,\psi_0,  \epsilon_0}^{(n)}(t), \psi_{r_0,\psi_0,  \epsilon_0}^{(n)}(t),  \epsilon_{r_0,\psi_0,  \epsilon_0}^{(n)}(t)) = (r_{r_0,\psi_0,  \epsilon_0}^{(m)}(t), \psi_{r_0,\psi_0,  \epsilon_0}^{(m)}(t),  \epsilon_{r_0,\psi_0,  \epsilon_0}^{(m)}(t))
\end{equation}
for all $t \leq 1 \wedge \tau_n(r_0,\psi_0,  \epsilon_0) \wedge \tau_m(r_0,\psi_0,  \epsilon_0)$.
The drift and diffusion coefficients of \eqref{sde:amended} are smooth and thus bounded and Lipschitz continuous on the restricted state space $\Gamma_n := [n^{-1},n] \times S^1 \times [0, \epsilon^*]$. Therefore we can use \cite[Theorem IX.2.4]{RevuzYor} to find a modification of \eqref{sp:stopped} which is continuous in $r_0$, $\psi_0$ and $\epsilon_0$. Since any two continuous modifications are necessarily indistinguishable, the equality \eqref{eq:stopped} does still hold almost surely. Now we can define a modification of \eqref{sp:prime} by
$$(r'_{r_0,\psi_0,  \epsilon_0}(t), \psi'_{r_0,\psi_0,  \epsilon_0}(t),  \epsilon'_{r_0,\psi_0,  \epsilon_0}(t)) := \lim_{n \to \infty} (r_{r_0,\psi_0,  \epsilon_0}^{(n)}(t), \psi_{r_0,\psi_0,  \epsilon_0}^{(n)}(t),  \epsilon_{r_0,\psi_0,  \epsilon_0}^{(n)}(t)).$$
Since the sequence on the right-hand side restricted to $(r_0,\psi_0,  \epsilon_0) \in \Gamma_m$ is almost surely eventually constant for each $m \in \mathbb N$, this does indeed define a modification of \eqref{sp:prime} which is continuous in $r_0$, $\psi_0$ and $\epsilon_0$.

Note that $(r'_{r_0, \psi_0, \epsilon_0}(t),\psi'_{r_0, \psi_0, \epsilon_0}(t))$ has the same law as the solution $(r_{\epsilon_0}(t),\psi_{\epsilon_0}(t))$ of \eqref{sde:rpsi} with initial conditions $r_{\epsilon_0} (0) = r_0$ and $\psi_{\epsilon_0} (0) = \psi_0$. Thus, we get
$$\left(\mathcal P_\epsilon \eta \right)(r,\psi) = \mathbb E_{r_\epsilon(0) = r,~\psi_\epsilon(0) = \psi}\left[\eta(r_\epsilon(1),\psi_\epsilon(1))\right] = \mathbb E\left[ \eta\left(r'_{r, \psi, \epsilon}(1), r'_{r, \psi, \epsilon}(1)\right)\right]$$
and continuity of $\mathcal P_\bullet \eta$ follows from the dominated convergence theorem.

ii): Let again $\eta \in \mathcal C_0 ( \Rp\times S^1)$. Since by i) the map $\mathcal P_\bullet \eta: (r, \psi, \epsilon) \mapsto \left(\mathcal P_\epsilon \eta\right)(r, \psi)$ is in $C_0(\Rp \times S^1 \times \Rpz)$, we have, in particular, that $\mathcal P_\epsilon \eta: (r,\psi) \mapsto \left(\mathcal P_\epsilon \eta\right)(r, \psi)$ is in $C_0(\Rp \times S^1)$ for each $\epsilon$, implying the Feller property.

iii): Note that the functions $g_3(\hat r,\cdot, 0), ~g_2(\hat r,\cdot, 0) : S^1 \to \mathbb R$ appearing in equation \eqref{sde:psihat} are smooth functions defined on a compact domain and thus, in particular, bounded and Lipschitz-continuous. Thus the Feller property of $\hat{\mathcal P}$ follows directly from \cite[Theorem IX.2.5]{RevuzYor}.
\end{proof}

In the following let $\mathfrak{p}^{\Rp} : \Rp \times S^1 \to \Rp$ and $\mathfrak{p}^{S^1} : \Rp \times S^1 \to S^1$ denote the projections onto the first and second component respectively. Firstly, we establish the existence and uniqueness of stationary measures for the respective processes.
\begin{proposition}\label{prop:uniquestat}
The following hold.
\begin{enumerate}
    \item[i)] For each $\epsilon>0$, there exists a unique measure $\rho_\epsilon \in \mathcal M_1(\Rp \times S^1)$ with $\mathcal P_\epsilon^* \rho_\epsilon = \rho_\epsilon$. Its marginal on the $r$-coordinate $\projpf^{\Rp} \rho_\epsilon \in \mathcal M_1 (\Rp)$ has a density $\xi_\epsilon : \Rp \to \Rpz$ with respect to Lebesgue measure given by 
    \begin{equation}\label{eq:xi}
    \xi_\epsilon(r) := L_\epsilon r \exp\left(-\frac{a}{2\epsilon^2\sigma'^2}\left(r^2-\Hat{
    r}^2\right)^2\right),
    \end{equation}
    where $L_\epsilon\geq 0$ is a normalization constant given by
    \begin{equation*}
    \label{eq:L_epsilon}
       L_\epsilon := \int_0^\infty r \exp\left(-\frac{a}{2\epsilon^2\sigma'^2}\left(r^2-\Hat{
    r}^2\right)^2\right) \dd r = \frac{2\sqrt{2a}}{\sqrt{\pi}\epsilon\sigma'\erfc\left(-\frac{\alpha}{\epsilon \sigma'\sqrt{2a}}\right)},  
    \end{equation*}
    with $\erfc$ denoting the complementary error function.
    \item[ii)] The exists a unique measure $\hat \rho \in \mathcal M_1 (S^1)$ with $\hat {\mathcal P}^* \hat \rho = \hat \rho.$
    \item[iii)] The measure $\rho_0 = \delta_{\hat r} \times \hat \rho \in \mathcal M_1(\Rp \times S^1)$ is the unique measure satisfying $\mathcal P_0^* \rho_0 = \rho_0$.
\end{enumerate}
\end{proposition}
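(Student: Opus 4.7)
The crucial structural feature is that $r_\epsilon$ evolves autonomously in \eqref{sde:rpsi}, so I would first analyse this scalar diffusion on $\Rp$. Its drift is repulsive at $r=0$ through the $\epsilon^2\sigma'^2/(2r)$ term and strongly confining at infinity through $-ar^3$, which together give non-explosion and positive Harris recurrence. Solving the stationary Fokker--Planck equation
\begin{equation*}
\tfrac12(\epsilon\sigma')^2\,\partial_r^2\xi_\epsilon - \partial_r\!\Big[\Big(\alpha r-ar^3+\tfrac{\epsilon^2\sigma'^2}{2r}\Big)\xi_\epsilon\Big]=0
\end{equation*}
by one integration (the integration constant being forced to vanish by normalisability at the endpoints) yields \eqref{eq:xi} up to rescaling, and the substitution $u=\tfrac{\sqrt a}{\epsilon\sigma'}(r^2-\hat r^2)$ converts the normalisation integral into the stated $\erfc$-expression for $L_\epsilon$. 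For the joint process on $\Rp\times S^1$, the diffusion matrix is diagonal with entries $(\epsilon\sigma')^2$ and $g_2(r,\psi,\epsilon)^2$, and a direct computation from \eqref{eq:defigihi}--\eqref{eq:defiA} gives $g_2(r,\psi,\epsilon)=-\sigma'r^{-1}[(1-\cos\psi)+\epsilon^2(1+\cos\psi)]$, which is strictly negative on $\Rp\times S^1$ whenever $\epsilon>0$. Hence the generator is uniformly elliptic on each set $[n^{-1},n]\times S^1$, parabolic regularity produces smooth positive transition densities, and so $\mathcal P_\epsilon$ is strong Feller and irreducible; Doob's theorem then forces at most one stationary measure. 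Existence follows from Krylov--Bogoliubov started from the stationary $r$-marginal $\xi_\epsilon\,\dd r$, with tightness automatic from compactness of $S^1$, and uniqueness identifies the $r$-marginal of $\rho_\epsilon$ with $\xi_\epsilon\,\dd r$.

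\textbf{Plan for (ii) and (iii).} On the compact manifold $S^1$ the coefficients of \eqref{sde:psihat} are smooth, so Krylov--Bogoliubov gives existence of $\hat\rho$. Uniqueness requires a hypoellipticity argument because $g_2(\hat r,\psi,0)=-\sigma'\hat r^{-1}(1-\cos\psi)$ vanishes at $\psi=0$. There the Itô drift equals $g_3(\hat r,0,0)=g_1(\hat r,0,0)=-4b'\alpha/a\neq 0$ in the relevant case $b'\neq 0$, while $\partial_\psi^2 g_2(\hat r,0,0)=-\sigma'\hat r^{-1}\neq 0$, so setting $X_0:=g_3(\hat r,\cdot,0)\partial_\psi$ and $X_1:=g_2(\hat r,\cdot,0)\partial_\psi$ a direct Lie-bracket computation yields $[X_0,[X_0,X_1]](0)\neq 0$; Hörmander's condition then holds globally on $S^1$, the transition densities are smooth and positive, and strong Feller plus irreducibility give uniqueness of $\hat\rho$. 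For (iii), the $\epsilon=0$ equation for $r_0$ reduces to the deterministic ODE $\dot r=r(\alpha-ar^2)$, whose only fixed point in $\Rp$ is $\hat r$ and toward which every orbit converges; hence $\delta_{\hat r}$ is the only invariant probability measure on $\Rp$, so the $r$-marginal of any $\mathcal P_0^*$-invariant $\rho_0$ must equal $\delta_{\hat r}$. Conditioning on $r=\hat r$ reduces the $\psi$-equation in \eqref{sde:rpsi} at $\epsilon=0$ exactly to \eqref{sde:psihat}, so the conditional law of $\psi$ is $\hat{\mathcal P}^*$-invariant and, by (ii), must equal $\hat\rho$, producing the factorisation $\rho_0 = \delta_{\hat r}\otimes\hat\rho$; the converse is immediate from the product structure.

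The principal obstacle is the degeneration of the diffusion coefficient at $\psi=0$ in (ii), which rules out a clean elliptic argument and forces me to rely on an iterated bracket computation to verify Hörmander's condition and thereby extract the Doeblin-type minorisation needed for Doob's theorem; as a safety net, the explicit stationary density computed by Imkeller and Lederer in \cite{ImkellerLederer}, transported through the change of projective parametrisation, furnishes existence and uniqueness directly.
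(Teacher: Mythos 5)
Your proof follows the same skeleton as the paper's: solve the stationary Fokker--Planck equation for the autonomous $r_\epsilon$-diffusion to get $\xi_\epsilon$, run a Krylov--Bogoliubov argument constrained to measures with $r$-marginal $\xi_\epsilon\,\dd r$ (tightness from $S^1$ compactness) for existence, deduce uniqueness from non-degeneracy of the projective diffusion, and in (iii) reduce to the deterministic ODE for $r_0$ and item (ii). The one genuine addition is your treatment of (ii): where the paper simply cites \cite{ImkellerLederer}, you make the verification self-contained by computing $g_2(\hat r,\psi,0)=-\sigma'\hat r^{-1}(1-\cos\psi)$, noting its sole zero at $\psi=0$, and checking the iterated bracket $[X_0,[X_0,X_1]]$ there is nonzero via $g_3(\hat r,0,0)=-4b'\alpha/a$ and $\partial_\psi^2 g_2(\hat r,0,0)=-\sigma'\hat r^{-1}$; this correctly requires $b'\neq 0$, which is the same implicit nondegeneracy the cited result relies on (and which is also needed for $\Psi(\zeta)$ with $\zeta>0$ in Theorem~\ref{theo:main}), so there is no gap, only a useful explicit check. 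Your formula $g_2(r,\psi,\epsilon)=-\sigma'r^{-1}[(1-\cos\psi)+\epsilon^2(1+\cos\psi)]$ likewise makes transparent the uniform ellipticity for $\epsilon>0$ that the paper invokes via reference for uniqueness in (i).
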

\begin{proof}
    For i), we will start by establishing existence of a stationary measure. One can easily check that, for each $\epsilon>0$, the function $\xi_\epsilon$ satisfies the stationary Fokker-Planck equation
    $$0 = -\partial_r\left[\left(\alpha r- ar^3 +\frac{\epsilon^2\sigma'^2}{2r}\right)\xi_\epsilon(r)\right]+\frac{\epsilon^2\sigma'^2}{2}\partial_r^2\xi_\epsilon(r)$$
    and $\xi_\epsilon(r)\dd r \in \mathcal M_1(\Rp)$ is thus a stationary distribution for the process $(r_\epsilon(t))$. Next we can consider the family of probability measures $\mathcal A_\epsilon \subset \mathcal M_1(\Rp\times S^1)$, defined by
    $$\mathcal A_\epsilon = \left\{\rho \in \mathcal M_1(\Rp\times S^1): \projpf^{\Rp}\rho(\dd r) = \xi_\epsilon(r) \dd r\right\}.$$
    Since $\xi_\epsilon(r)\dd r$ is stationary, $\mathcal A_\epsilon$ is $\mathcal P_\epsilon^*$ invariant, i.e.~we have $\mathcal P_\epsilon^*\mathcal A_\epsilon \subseteq \mathcal A_\epsilon$. Furthermore, with respect to the topology of weak convergence, $\mathcal A_\epsilon$ is convex and closed by definition and tight since $\xi_\epsilon(r)\dd r$ is tight and $S^1$ is compact. Starting from an arbitrary $\rho_\epsilon^{(0)} \in \mathcal A_\epsilon$, we can construct a sequence $\left(\rho_\epsilon^{(n)}\right)_{n \in \mathbb N_0}$ by
    $$\rho_\epsilon^{(n)} = \frac{1}{n+1} \sum_{k = 0}^n \left(\mathcal P_\epsilon^*\right)^k \rho_\epsilon^{(0)}.$$
    By invariance and convexity of $\mathcal A_\epsilon$, we have $\rho_\epsilon^{(n)} \in \mathcal A_\epsilon$ for all $n \in \mathbb N_0$. By Prokhorov's theorem, the sequence
    has an accumulation point $\rho_\epsilon$ with respect to the topology of weak convergence, which must lie in $\mathcal A_\epsilon$ due to closedness. Analagously to the proof of the Krylov-Bogolyubov theorem (see e.g. \cite[Theorem 1.10]{hairerNotes}), we can conclude that $\mathcal P_\epsilon^*\rho_\epsilon = \rho_\epsilon$. Uniqueness follows directly from the uniform ellipticity of the associated generator (see e.g. \cite[section 3]{ChekrounTantetII} for details). 
    
For ii), both existence and uniqueness were already established in \cite{ImkellerLederer}, by using the Krylov-Bogolyubov theorem for existence and Hörmander's theorem for uniqueness.

For iii), it is easy to check that $\rho_0 = \delta_{\hat r} \times \Hat \rho$ satisfies $\mathcal P_0^* \rho_0 = \rho_0$. Thus it only remains to show that $\rho_0$ is the unique stationary measure. Suppose we are given a measure $\rho_0' \in \mathcal M_1(\Rp\times S^1)$ with $\mathcal P_0^* \rho_0' = \rho_0'$. Since the process $(r_0(t))$ is governed by the ODE
$$\dd r_0(t) = \left(\alpha r_0(t) - ar_0(t)^3\right)\dd t,$$
we must have $\projpf^{\Rp} \rho_0' = \delta_{\hat r}$. Furthermore, if we set $r_0(t) = \hat r$ for all $t\geq 0$, the process $(\psi_0(t))$ is governed by the same equation as $(\hat \psi(t))$. 
Therefore, it follows from ii) that $\projpf^{S^1} \rho_0' = \hat \rho$ and, thus, $\rho_0'=  \delta_{\hat r} \times \hat \rho= \rho_0$.
\end{proof}

%%%%%%%%%%%%%
We will now prove two lemmas, using properties of the stationary measures established in Proposition~\ref{prop:uniquestat},
that will be essential for our proof of Theorem~\ref{theo:main}.
The first lemma concerns a concentration bound for the measure $\xi_\epsilon(r)\dd r$ for sufficiently small $\epsilon$.
\begin{lemma}\label{lemm:concentration}
Let $f: \Rp \to \mathbb R$ be a function that satisfies $|f(r)| \lesssim 1+r^{-1}+r^k$ for some $k \in \N$. Then $f\in L^1_{\xi_\epsilon}$, for all $\epsilon >0$. Furthermore, if there exists an open neighborhood $U \subset \Rp$ of $\hat r$ with $f(r) = 0$ for all $r\in U$, then the bound
$$\left|\int_0^\infty f(r)\xi_\epsilon(r)\dd r\right|\leq \exp\left(-\frac{1}{\epsilon}\right)$$
holds for sufficiently small $\epsilon>0$. 
\end{lemma}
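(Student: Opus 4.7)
The plan is to exploit the super-Gaussian concentration of $\xi_\epsilon$ around $\hat r$, which follows directly from the explicit formula for $\xi_\epsilon$ in \eqref{eq:xi}.

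For integrability, I first observe that for $\epsilon>0$ the normalization constant $L_\epsilon$ is finite (as given in Proposition~\ref{prop:uniquestat}). From the hypothesis $|f(r)|\lesssim 1+r^{-1}+r^k$, one gets
\begin{equation*}
    |f(r)|\,\xi_\epsilon(r) \;\lesssim\; L_\epsilon \left(r+1+r^{k+1}\right)\exp\!\left(-\frac{a}{2\epsilon^2\sigma'^2}(r^2-\hat r^2)^2\right).
\end{equation*}
The factor $\exp(-C r^4)$ at infinity beats any polynomial, and the integrand is bounded near $r=0$ (the pole of $f$ is cancelled by the prefactor $r$ in $\xi_\epsilon$). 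Hence $f\in L^1_{\xi_\epsilon}$.

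For the concentration bound, I fix $\delta>0$ so small that $(\hat r-\delta,\hat r+\delta)\subset U$ and $\delta<\hat r$. Then for $r\in \Rp\setminus(\hat r-\delta,\hat r+\delta)$ one has $(r^2-\hat r^2)^2\ge c^2$ with $c:=\hat r^2-(\hat r-\delta)^2>0$. I then split the exponent in half to extract a uniform small factor: on the set where $f\ne 0$,
\begin{equation*}
    \exp\!\left(-\tfrac{a(r^2-\hat r^2)^2}{2\epsilon^2\sigma'^2}\right) \le \exp\!\left(-\tfrac{ac^2}{4\epsilon^2\sigma'^2}\right)\exp\!\left(-\tfrac{a(r^2-\hat r^2)^2}{4\epsilon^2\sigma'^2}\right).
\end{equation*}
Consequently,
\begin{equation*}
    \left|\int_0^\infty f(r)\,\xi_\epsilon(r)\,\dd r\right| \le L_\epsilon\exp\!\left(-\tfrac{ac^2}{4\epsilon^2\sigma'^2}\right)\int_0^\infty |f(r)|\,r\exp\!\left(-\tfrac{a(r^2-\hat r^2)^2}{4\epsilon^2\sigma'^2}\right)\dd r.
\end{equation*}

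The remaining integral, once multiplied by $L_\epsilon$, is at most of polynomial order in $1/\epsilon$: substituting $u=(r^2-\hat r^2)/\epsilon$ turns the Gaussian-type integral with respect to $r$ into an $O(\epsilon)$ expression times polynomials in $u$ and $\epsilon$, and $L_\epsilon\sim \epsilon^{-1}$ as $\epsilon\to 0$ (by the asymptotic $\erfc(-\alpha/(\epsilon\sigma'\sqrt{2a}))\to 2$). Thus the whole prefactor grows at most polynomially in $1/\epsilon$, which is dominated by $\exp(-ac^2/(4\epsilon^2\sigma'^2))$. Hence the full product is bounded by $\exp(-1/\epsilon)$ for all sufficiently small $\epsilon>0$, giving the claimed estimate.

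The only mildly delicate point is tracking the $\epsilon$-dependence of the normalization constant $L_\epsilon$ carefully enough so that its blow-up does not spoil the exponential smallness; since the decay $e^{-C/\epsilon^2}$ obtained here is much stronger than the stated bound $e^{-1/\epsilon}$, there is plenty of slack and no genuine obstacle.
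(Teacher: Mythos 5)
Your proof is correct, and it reaches the same super-Gaussian decay rate $e^{-ac^2/(4\epsilon^2\sigma'^2)}$ that underlies the paper's estimate, so there is ample slack over the stated $e^{-1/\epsilon}$ bound. The route, however, is genuinely different in its bookkeeping. The paper first substitutes $p=r^2$, which converts $\int |f|\xi_\epsilon\,\dd r$ into an exact Gaussian expectation $\frac{2}{\erfc(-\alpha/(\epsilon\sigma'\sqrt{2a}))}\,\mathbb E_{p\sim\mathcal N(\hat r^2,\epsilon^2\sigma'^2 a^{-1})}[|\tilde f(p)|]$; it then drops the $\erfc$ prefactor entirely using $\erfc\ge 1$ on negative arguments, and obtains the concentration bound by comparing the $\epsilon$-variance Gaussian density pointwise to the $\epsilon=1$ Gaussian density away from the peak. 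You instead work directly in the $r$ variable, keep $L_\epsilon$ explicit, split the exponential multiplicatively into a uniform small factor $\exp(-ac^2/(4\epsilon^2\sigma'^2))$ times a residual Gaussian weight, and then control $L_\epsilon\cdot(\text{residual integral})$ by its $\epsilon\to 0$ asymptotics. Both arguments are sound; the paper's version has the advantage that the $\erfc\geq 1$ trick kills the normalization constant at the outset, so one never needs to track it, while your version is more transparent about where the polynomial-in-$1/\epsilon$ prefactors come from. (Minor note: in your step estimating the residual integral via the substitution $u=(r^2-\hat r^2)/\epsilon$, you could dispense with the substitution altogether — for $\epsilon\le 1$ the residual integral is monotone in $\epsilon$ and hence bounded by its value at $\epsilon=1$, which is finite by the first part of the lemma; this avoids having to track the $r^{-1}$ pole under the change of variables. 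Also, as a sanity check on your claim $L_\epsilon\sim\epsilon^{-1}$: the paper's display defining $L_\epsilon$ has an apparent typo equating $L_\epsilon$ both to the normalizing integral and to its reciprocal; the density $\xi_\epsilon$ to integrate to one requires $L_\epsilon$ to be the reciprocal, which is the expression $\frac{2\sqrt{2a}}{\sqrt\pi\epsilon\sigma'\erfc(\cdot)}$ you correctly used.)
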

\begin{proof}
To show these statements we will make use of the substitution $p:= r^2$ and will write $\Tilde{f}(p) := f(r) = f(\sqrt{p})$.  
Firstly, we observe that
\begin{align*}
    \int_0^\infty |f(r)|\xi_\epsilon(r) \dd r =& \int_0^\infty |f(r)|L_\epsilon r \exp\left(-\frac{a}{2\epsilon^2\sigma'^2}\left(r^2-\Hat{
    r}^2\right)^2\right) \dd r\\
    =& \frac{L_\epsilon}{2}\int_0^\infty |\Tilde{f}(p)|  \exp\left(-\frac{a}{2\epsilon^2\sigma'^2}\left(p-\Hat{
    r}^2\right)^2\right) \dd p\\
    =& \frac{\sqrt{\pi}\epsilon \sigma'}{\sqrt{2a}}L_\epsilon  \mathbb E_{p \sim \mathcal N(\hat r^2, \epsilon^2\sigma'^2a^{-1})}\left[|\Tilde{f}(p)|\right]\\
    =& \frac{2}{\erfc\left(-\frac{\alpha}{\epsilon \sigma'\sqrt{2a}}\right)}  \mathbb E_{p \sim \mathcal N(\hat r^2, \epsilon^2\sigma'^2a^{-1})}\left[|\Tilde{f}(p)|\right],
\end{align*}
where $\mathcal N(\hat r^2, \epsilon^2\sigma'^2a^{-1})$ denotes a normal distribution with mean $\hat r^2$ and variance $\epsilon^2\sigma'^2a^{-1}$ and we interpret $\Tilde f$ as a function defined on the entire real line by setting $\Tilde f(p) := 0$ for $p \leq 0$. 
Since the complementary error function always takes values larger than 1 for negative arguments, we get the bound
$$ \int_0^\infty |f(r)|\xi_\epsilon(r) \dd r\leq 2 \mathbb E_{p \sim \mathcal N(\hat r^2, \epsilon^2\sigma'^2a^{-1})}\left[|\Tilde{f}(p)|\right].$$
Therefore, in order to show $f \in L^1_{\xi_\epsilon}$, it is sufficient to show $\Tilde f \in L^1_{\mathcal N(\hat r^2, \epsilon^2\sigma'^2a^{-1})}$. Suppose $f$ satisfies $f(r) \lesssim 1+r^{-1}+r^k$ and thus equivalently $\Tilde f(p) \lesssim 1+p^{-\frac{1}{2}}+p^{\frac{k}{2}}$. 
Then, we have $$\mathds 1_{(0,1]}(p) \Tilde f(p) \lesssim 1 + p^{-\frac{1}{2}},$$
which implies $\mathds 1_{(0,1]}\Tilde f \in L^1_{\operatorname{Leb}}$ and in particular $\mathds 1_{(0,1]}\Tilde f \in L^1_{\mathcal N(\hat r^2, \epsilon^2\sigma'^2a^{-1})}$. 
Similarly, we have
$$\mathds 1_{(1,\infty)}(p) \Tilde f(p) \lesssim 1 + p^{\frac{k}{2}},$$
which immediately implies $\mathds 1_{(1,\infty)}\Tilde f \in L^1_{\mathcal N(\hat r^2, \epsilon^2\sigma'^2a^{-1})}$. Since we have $\Tilde f = \mathds 1_{(0,1]}\Tilde f + \mathds 1_{(1,\infty)}\Tilde f$, we also have $\Tilde f \in L^1_{\mathcal N(\hat r^2, \epsilon^2\sigma'^2a^{-1})}$, which shows the first part of the lemma.

For the second part, suppose $f(r) = 0$ for all $r \in U$, where $U$ is an open neighborhood of $\hat r$. Then $\Tilde{f}$ will vanish in a neighborhood around $\hat r ^2$. Let $0 < \delta < \hat r^2$ be such that $\Tilde f(p) = 0$ for all $p \in (\hat r^2 -\delta, \hat r^2 +\delta)$. 
For $p \in \Rp \setminus (\hat r^2 -\delta, \hat r^2 +\delta)$, we have
\begin{align*}
    \frac{\frac{\sqrt{a}}{\epsilon\sigma'\sqrt{2\pi}}\exp\left(-\frac{a(p-\hat r ^2)^2}{2\epsilon^2\sigma'^2}\right)}{\frac{\sqrt{a}}{\sigma'\sqrt{2\pi}}\exp\left(-\frac{a(p-\hat r ^2)^2}{2\sigma'^2}\right)} &= \frac{1}{\epsilon}\exp\left(-\frac{a(p-\hat r ^2)^2}{2\sigma'^2}\left(\frac{1}{\epsilon^2}-1\right)\right)\\
    &\leq \frac{1}{\epsilon}\exp\left(-\frac{a\delta^2}{2\sigma'^2}\left(\frac{1}{\epsilon^2}-1\right)\right).
\end{align*}
Now, we can estimate
\begin{align*}
    \left|\int_0^\infty f(r)\xi_\epsilon(r)\dd r\right| &\leq \int_0^\infty |f(r)|\xi_\epsilon(r) \dd r\\
    &\leq 2 \frac{\sqrt{a}}{\epsilon\sigma'\sqrt{2\pi}}\int_0^\infty |\Tilde f(r)|\exp\left(-\frac{a(p-\hat r ^2)^2}{2\epsilon^2\sigma'^2}\right) \dd p\\
    &\leq 2\frac{1}{\epsilon}\exp\left(-\frac{a\delta^2}{2\sigma'^2}\left(\frac{1}{\epsilon^2}-1\right)\right)\frac{\sqrt{a}}{\sigma'\sqrt{2\pi}}\int_0^\infty |\Tilde f(r)|\exp\left(-\frac{a(p-\hat r ^2)^2}{2\sigma'^2}\right) \dd p\\
    &= 2\frac{1}{\epsilon}\exp\left(-\frac{a\delta^2}{2\sigma'^2}\left(\frac{1}{\epsilon^2}-1\right)\right) \mathbb E_{p \sim \mathcal N(\hat r^2, \sigma'^2a^{-1})}\left[|\Tilde{f}(p)|\right].
\end{align*}
By the first part of the lemma, we have $\Tilde f \in L^1_{\mathcal N(\hat r^2, \sigma'^2a^{-1})}$ such that the expectation in the last line is finite. 
Thus, for sufficiently small $\epsilon>0$, we get the desired bound
 \begin{align*}
     \left|\int_0^\infty f(r)\xi_\epsilon(r)\dd r\right|& \leq2\frac{1}{\epsilon}\exp\left(-\frac{a\delta^2}{2\sigma'^2}\left(\frac{1}{\epsilon^2}-1\right)\right) \mathbb E_{p \sim \mathcal N(\hat r^2, \sigma'^2a^{-1})}\left[|\Tilde{f}(p)|\right]\\
     &\leq \exp\left(-\frac{1}{\epsilon}\right).
 \end{align*}
 This finishes the proof.
\end{proof}
%%%%%%%%%%%%%%%%%%%%%%%%%%%%%%%%%%

The second lemma concerns the weak convergence of $\rho_{\epsilon}$ to $\rho_0$ which will be crucial for obtaining continuity of the top Lyapunov exponents for $\epsilon \to 0$.
\begin{lemma}\label{lemm:statMeasConv}
The measures $\rho_\epsilon$ converge weakly in $\mathcal{M}_1(\Rp \times S^1)$ to $\rho_0 = \Hat{\rho} \times \delta_{\Hat{r}}$ as $\epsilon$ tends to zero.
\end{lemma}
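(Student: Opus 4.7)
The plan is a standard Krylov--Bogolyubov-type argument: establish tightness of the family $\{\rho_\epsilon\}_{\epsilon > 0}$, show that every weak subsequential limit is invariant under $\mathcal{P}_0^*$, and invoke the uniqueness statement of Proposition~\ref{prop:uniquestat}~iii).

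First I would prove tightness. Since $S^1$ is compact, it suffices to control the $r$-marginals $\projpf^{\Rp} \rho_\epsilon(\dd r) = \xi_\epsilon(r)\dd r$. Applying Lemma~\ref{lemm:concentration} to a continuous cutoff function $f$ supported outside a neighborhood of $\hat r$ (and extended to be bounded), or inspecting the explicit Gaussian-type density~\eqref{eq:xi} directly, shows that for every $\delta > 0$ there is a compact interval $K_\delta \subset \Rp$ such that $\int_{K_\delta^c} \xi_\epsilon(r)\,\dd r < \delta$ for all sufficiently small $\epsilon > 0$. In particular $K_\delta \times S^1$ is compact and $\rho_\epsilon(K_\delta \times S^1) > 1 - \delta$, so the family is tight.

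By Prokhorov's theorem, any sequence $\epsilon_n \to 0$ has a subsequence (still denoted $\epsilon_n$) along which $\rho_{\epsilon_n}$ converges weakly to some $\rho_* \in \mathcal{M}_1(\Rp \times S^1)$. To show $\mathcal{P}_0^* \rho_* = \rho_*$, fix $\eta \in \mathcal{C}_0(\Rp \times S^1)$ and write
\begin{equation*}
\int \eta \,\dd \rho_{\epsilon_n} \;=\; \int \mathcal{P}_{\epsilon_n} \eta \,\dd \rho_{\epsilon_n} \;=\; \int \mathcal{P}_0 \eta \,\dd \rho_{\epsilon_n} + \int \bigl(\mathcal{P}_{\epsilon_n} \eta - \mathcal{P}_0 \eta\bigr)\,\dd \rho_{\epsilon_n},
\end{equation*}
using stationarity of $\rho_{\epsilon_n}$. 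The left-hand side converges to $\int \eta\,\dd \rho_*$, and since $\mathcal{P}_0 \eta \in \mathcal{C}_0(\Rp \times S^1)$ by Lemma~\ref{lemm:Pphi}~ii), the first term on the right converges to $\int \mathcal{P}_0 \eta\,\dd\rho_*$. The error term is handled by combining Lemma~\ref{lemm:Pphi}~i)---which gives continuity, hence uniform continuity of $(r, \psi, \epsilon) \mapsto (\mathcal{P}_\epsilon \eta)(r, \psi)$ on any compact subset of $\Rp \times S^1 \times [0, \epsilon^*]$---with the tightness established above: given $\delta > 0$, choose $K_\delta$ as before, bound $|\mathcal{P}_{\epsilon_n}\eta - \mathcal{P}_0\eta|$ uniformly on $K_\delta \times S^1$ by uniform continuity, and bound the mass outside by $2\|\eta\|_\infty \delta$. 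Sending $n \to \infty$ and then $\delta \to 0$ yields $\int \eta\,\dd\rho_* = \int \mathcal{P}_0 \eta \,\dd \rho_*$. Since $\eta$ was arbitrary, $\rho_*$ is $\mathcal{P}_0^*$-invariant.

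By Proposition~\ref{prop:uniquestat}~iii), $\rho_* = \rho_0 = \delta_{\hat r} \times \hat \rho$, so every weak subsequential limit equals $\rho_0$ and therefore $\rho_\epsilon \to \rho_0$ weakly. I anticipate that the main technical delicacy is the error-term estimate: one must carefully combine the tightness of $\{\rho_\epsilon\}$ with the joint continuity in $\epsilon$ supplied by Lemma~\ref{lemm:Pphi}~i), because neither pointwise convergence of $\mathcal{P}_\epsilon \eta$ nor the Feller property alone would suffice.
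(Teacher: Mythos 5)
Your proof is correct and takes essentially the same route as the paper: extract subsequential weak limits via compactness, show any limit is $\mathcal{P}_0^*$-invariant using the joint continuity from Lemma~\ref{lemm:Pphi}~i), and invoke the uniqueness in Proposition~\ref{prop:uniquestat}~iii). The technical execution differs in two minor places. First, you establish tightness on $\Rp\times S^1$ directly and apply Prokhorov there, whereas the paper first shows $\xi_\epsilon(r)\dd r \to \delta_{\hat r}$, reduces to the $\psi$-marginal, and exploits compactness of $\mathcal M_1(S^1)$ (Banach--Alaoglu); your version sidesteps the implicit use of the fact that marginal convergence with one marginal tending to a Dirac mass forces joint convergence to the product. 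Second, for the passage to the limit you decompose $\int \mathcal P_{\epsilon_n}\eta\,\dd\rho_{\epsilon_n}$ into a main term and an error term controlled by tightness plus uniform continuity on compacts, whereas the paper achieves the same effect more compactly by viewing $\rho_{\epsilon_n}\times\delta_{\epsilon_n}$ as measures on the augmented space $\Rp\times S^1\times\Rpz$ and applying weak convergence against the jointly continuous bounded function $\mathcal P_\bullet\eta$; both are valid and both hinge on exactly the statement of Lemma~\ref{lemm:Pphi}~i), as you correctly anticipate.
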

The proof of this statement is inspired by \cite[Proposition 5.9]{VianaLyap}.
\begin{proof}[Proof of Lemma \ref{lemm:statMeasConv}]
By using formula \eqref{eq:xi}, we obtain directly that $(\projpf^{\Rpz} \rho_\epsilon) = (\xi_\epsilon(r)\dd r)$ converges weakly in $\mathcal M_1(\Rp)$ to $\delta_{\Hat{r}}$ as $\epsilon$ tends to zero, i.e.~weak convergence in the $r$-direction. Thus, it is sufficient to show that $(\projpf^{S^1} \rho_\epsilon)$ converges weakly in $\mathcal M_1 (S^1)$ to $\Hat{\rho}$. The space $\mathcal M_1 (S^1)$ is compact, by the Banach-Alaoglu theorem, and metrizable. Therefore, $(\projpf^{S^1} \rho_\epsilon)$ converges weakly to $\Hat{\rho}$ if and only if every accumulation point of $(\projpf^{S^1} \rho_\epsilon)$, as $\epsilon$ tends to zero, is equal to $\Hat{\rho}$.

Suppose now that $\nu \in \mathcal M_1(S^1)$ is such an accumulation point and $(\epsilon_n)_{n \in \N}$ is chosen such that $\epsilon_n \to 0$ and $\projpf^{S^1}\rho_{\epsilon_n} \to \nu$ weakly. The latter property is equivalent to $(\rho_{\epsilon_n})$ converging weakly to $\delta_{\Hat{r}} \times \nu$, which in turn also implies that $(\rho_{\epsilon_n} \times \delta_{\epsilon_n})$ converges weakly in $\mathcal M_1(\Rp \times S^1 \times \Rpz)$ to $\delta_{\Hat{r}} \times \nu \times \delta_{0}$. Let $\eta \in \mathcal C^0(\Rp\times S^1)$ be an arbitrary bounded continuous function. We have
\begin{align*}
    \int_{\Rp\times S^1} \eta(r,\psi) \nu(\dd \psi) \delta_{\Hat{r}}(\dd r) =& \lim_{n \to \infty} \int_{\Rp\times S^1} \eta(r,\psi) \rho_{\epsilon_n}(\dd r, \dd \psi)\\
    =& \lim_{n \to \infty} \int_{\Rp\times S^1} \mathcal P_{\epsilon_n}\eta(r,\psi) \rho_{\epsilon_n}(\dd r, \dd \psi)\\
    =&  \lim_{n \to \infty} \int_{\Rp \times S^1\times \Rpz} \mathcal P_{\epsilon}\eta(r,\psi) \rho_{\epsilon_n}(\dd r, \dd \psi)\delta_{\epsilon_n}(\dd \epsilon).
\end{align*}
Now we can use Lemma \ref{lemm:Pphi} i) and the weak convergence $\rho_{\epsilon_n} \times \delta_{\epsilon_n} \to \delta_{\Hat{r}} \times \nu \times \delta_{0}$ to get
\begin{align*}
    \int_{\Rp\times S^1\times \Rpz} \eta(r,\psi) \delta_{\Hat{r}}(\dd r)\nu(\dd \psi)  =&  \lim_{n \to \infty} \int_{\Rp\times S^1} \mathcal P_{\epsilon}\eta(r,\psi) \rho_{\epsilon_n}(\dd r,\dd \psi)\delta_{\epsilon_n}(\dd \epsilon)\\
    =&\int_{\Rp\times S^1\times \Rpz} \mathcal P_{\epsilon}\eta(r,\psi) \delta_{\Hat{r}}(\dd r)\nu(\dd \psi)\delta_{0}( \dd \epsilon)\\
    =& \int_{\Rp\times S^1} \mathcal P_0\eta(r,\psi) \delta_{\Hat{r}}(\dd r)\nu(\dd \psi)\\
    =& \int_{\Rp\times S^1} \mathcal \eta(r,\psi) \mathcal P^*_0 (\delta_{\Hat{r}} \times \nu)(\dd r, \dd \psi).
\end{align*}
Since $\eta \in C^0(\Rp\times S^1)$ was arbitrary, this implies $\delta_{\Hat{r} \times \nu} = \mathcal P^*_0 (\delta_{\Hat{r}} \times \nu)$. However, by Proposition \ref{prop:uniquestat}, the measure $\delta_{\Hat{r}}\times\Hat{\rho} $ is the unique fixed point of $\mathcal P_0^*$, so we must have $\nu = \Hat{\rho}$, completing the proof.
\end{proof}

\subsection{Furstenberg-Khasminskii formula for limiting process}
\label{sec:fk_formula}
Recall from Section~\ref{sec:sdlc} the Lyapunov exponent $\lambdahat$ (see \eqref{eq:topLyap_conv}) of \eqref{sde:ogsdlc}, the function $\Psi$ given by \eqref{def:mu} and the change of parameters
$$\alphahat = 2 \alpha,~\bhat = 2\frac{\alpha b'}{a} \text{ and } \sigmahat = \sigma' \sqrt{\frac{a}{\alpha}}.$$
For the limiting process on $S^1$, as given in Proposition~\ref{prop:fk}, we can prove the following Furstenberg-Khasminskii formula. A similar formula for a more general situation is already given in \cite{ImkellerLederer}. For completeness of our arguments, we provide a self-sustained derivation here.
\begin{proposition}\label{prop:fkfsdlc}
 We have
 \begin{equation}
 \label{eq:FKformula_limit}
     2\alpha~ \Psi\left(\frac{b'^2\sigma'^2}{2\alpha^2a}\right) = \lambdahat\left(2 \alpha, 2\frac{\alpha b'}{a}, \sigma'\sqrt{\frac{a}{\alpha}}\right) = \int_{S_1} h_3 (\hat r, \psi, 0) \hat \rho(\dd \psi).
 \end{equation}
\end{proposition}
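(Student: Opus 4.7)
The first equality is immediate from Theorem \ref{theo:LE-sdlc} once one substitutes the indicated parameters, since
$$\frac{\bhat^2\sigmahat^2}{\alphahat^3} \;=\; \frac{(2\alpha b'/a)^2 \,\sigma'^2 (a/\alpha)}{(2\alpha)^3} \;=\; \frac{b'^2\sigma'^2}{2\alpha^2 a}.$$
So the substance of the proposition is the Furstenberg-Khasminskii identity expressing $\lambdahat$ as the integral of $h_3(\hat r, \cdot, 0)$ against the stationary measure $\hat\rho$.

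The plan is to integrate the SDE for $\hat\Lambda$ obtained in Proposition \ref{prop:fk}, divide by $t$, and let $t \to \infty$. Explicitly, starting from an arbitrary $(\hat s_0, \hat\theta_0) \neq (0,0)$ we obtain
\begin{equation*}
\frac{\hat\Lambda(t)}{t} \;=\; \frac{\hat\Lambda(0)}{t} + \frac{1}{t}\int_0^t h_3(\hat r, \hat\psi(u), 0)\, du + \frac{1}{t}\int_0^t h_2(\hat r, \hat\psi(u), 0)\, d\hat W_3(u).
\end{equation*}
The left-hand side converges almost surely, as $t \to \infty$, to $\lambdahat$ by Theorem \ref{theo:LE-sdlc} (equivalently, by \eqref{eq:sdlcLyapLim} after the parameter identification). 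The first term on the right vanishes trivially.

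For the drift average, I would invoke ergodicity of the Markov process $(\hat\psi(t))$ on the compact circle $S^1$ with respect to $\hat\rho$. By Proposition \ref{prop:uniquestat} ii), $\hat\rho$ is the unique $\Hat{\mathcal P}^*$-invariant measure, which by the standard ergodic decomposition argument forces $\hat\rho$ to be ergodic for $\Hat{\mathcal P}$. Since $h_3(\hat r, \cdot, 0) \in \mathcal C_0(S^1)$ is continuous on the compact $S^1$, Birkhoff's ergodic theorem (applied in the continuous-time setting, e.g.~via the skeleton chain) gives
$$\frac{1}{t}\int_0^t h_3(\hat r, \hat\psi(u), 0)\, du \;\xrightarrow[t\to\infty]{} \;\int_{S^1} h_3(\hat r, \psi, 0)\, \hat\rho(d\psi)\qquad\text{almost surely},$$
at least for $\hat\rho$-almost every initial $\hat\psi(0)$; and this suffices because the left-hand limit $\lambdahat$ of the overall identity is independent of initial conditions. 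For the martingale term, the integrand $h_2(\hat r, \cdot, 0)$ is bounded on $S^1$, so its quadratic variation grows at most linearly in $t$, and the strong law of large numbers for continuous $L^2$-martingales (or a direct Borel–Cantelli argument using Doob's inequality on dyadic time intervals) yields $\tfrac{1}{t}\int_0^t h_2(\hat r, \hat\psi(u), 0)\, d\hat W_3(u) \to 0$ almost surely.

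Collecting the three limits proves \eqref{eq:FKformula_limit}. The only non-routine ingredient is the passage from uniqueness of $\hat\rho$ to ergodicity and the applicability of Birkhoff's theorem in continuous time to a continuous test function; both are standard for Markov semigroups on compact metric spaces with a unique invariant measure (as already established via the Hörmander-type argument cited from \cite{ImkellerLederer}), so I expect this to be the mildest technical point rather than a genuine obstacle.
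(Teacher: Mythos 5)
Your proof is correct and follows essentially the same route as the paper: verify the first equality by direct parameter substitution, write $\hat\Lambda(t)$ via the SDE from Proposition~\ref{prop:fk} as a drift integral plus a stochastic integral, then use ergodicity (uniqueness of $\hat\rho$ from Proposition~\ref{prop:uniquestat}~ii)) for the drift term and a strong-law-of-large-numbers argument for martingales with bounded integrand for the stochastic term. The paper packages the martingale step as Lemma~\ref{lemm:noiselim} via Dambis--Dubins--Schwarz rather than Doob/Borel--Cantelli, and is slightly terser about the initial-condition issue in the ergodic theorem, which you handle a touch more explicitly; these are cosmetic differences, not a different approach.
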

In order to proof this proposition, we require the following lemma.
\begin{lemma}\label{lemm:noiselim}
Suppose $(w(t))$ is a real-valued semi-martingale satisfying $$\limsup_{t \to \infty} \frac{1}{t}\int_0^t w(t')^2\dd t' < \infty.$$
Then, for $(W(t))$ denoting some Brownian motion, we have 
$$\lim_{t \to \infty} \frac{1}{t}\int_0^t w(t') \dd W(t') = 0.$$
\end{lemma}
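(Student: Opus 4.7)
Set $M(t) := \int_0^t w(s)\,\dd W(s)$. This is a continuous local martingale with quadratic variation $\langle M\rangle(t) = \int_0^t w(s)^2\,\dd s$, and by the hypothesis, almost surely there exist a (random) constant $C<\infty$ and $t_0$ such that $\langle M\rangle(t) \le C t$ for all $t \ge t_0$. The goal is then to deduce the pathwise asymptotic $M(t)/t \to 0$, which is a variant of the strong law of large numbers for continuous local martingales.

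The natural tool is the Dambis--Dubins--Schwarz theorem (see e.g.\ \cite[Theorem V.1.6]{RevuzYor}): on a possibly enlarged probability space, there is a standard Brownian motion $(B(u))_{u \ge 0}$ such that $M(t) = B(\langle M\rangle(t))$ for all $t \ge 0$ almost surely. I would then split into two cases. If $\langle M\rangle(\infty) < \infty$, then $M(t)$ converges almost surely to the finite random variable $B(\langle M\rangle(\infty))$, so $M(t)/t \to 0$ trivially. Otherwise $\langle M\rangle(t) \to \infty$ almost surely, and I use the well-known pathwise fact that $B(u)/u \to 0$ as $u \to \infty$ almost surely (an immediate consequence of the law of the iterated logarithm, or of the classical strong law for Brownian motion). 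Writing
$$\frac{|M(t)|}{t} \;=\; \frac{|B(\langle M\rangle(t))|}{\langle M\rangle(t)} \cdot \frac{\langle M\rangle(t)}{t}$$
for $t \ge t_0$, the first factor tends to $0$ almost surely by the above, while the second factor is eventually bounded by $C$ (by hypothesis). This yields $M(t)/t \to 0$ almost surely, as desired.

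\textbf{Main obstacle.} Essentially none: the argument is a standard application of the time-change representation combined with the asymptotic behaviour of Brownian motion at infinity. The only mild care needed is the case distinction between $\langle M\rangle(\infty)<\infty$ and $\langle M\rangle(\infty)=\infty$, and in stating the result one should be careful that convergence is \emph{almost sure}, which is what is needed for its subsequent use in the Furstenberg--Khasminskii formula (Proposition \ref{prop:fkfsdlc}), where one wants to discard the martingale part of $\hat\Lambda(t)/t$ along almost every path.
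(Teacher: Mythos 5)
Your proposal is correct and uses the same central tool as the paper's proof — the Dambis--Dubins--Schwarz time-change representation $M(t)=B(\langle M\rangle(t))$ combined with the pathwise law $B(u)/u\to 0$. In fact your version is slightly \emph{more} careful than the paper's: the paper writes
$$\lim_{t\to\infty}\frac{1}{\langle X\rangle(t)}W_X(\langle X\rangle(t)) = \lim_{t\to\infty}\frac{1}{t}W_X(t) = 0,$$
a substitution that is only valid along paths with $\langle X\rangle(t)\to\infty$, and does not separately handle paths on which the quadratic variation stays bounded. Your explicit case distinction (if $\langle M\rangle(\infty)<\infty$ then $M(t)$ converges to a finite limit, so $M(t)/t\to 0$ trivially; if $\langle M\rangle(\infty)=\infty$ apply the time change) fills in exactly that small gap. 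The decomposition $|M(t)|/t = (|B(\langle M\rangle(t))|/\langle M\rangle(t))\cdot(\langle M\rangle(t)/t)$ is also the cleaner way to package the estimate, since it avoids the informal "product of a $\limsup$ and a $\lim$" step the paper uses. Conclusion: same route, with a minor but genuine improvement in rigor.
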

\begin{proof}
Let $(w(t))$ satisfy the assumption. Define a process $(X(t))$ by 
$$X(t) := \int_0^t w(t') \dd W(t').$$
Clearly $(X(t))$ is a local martingale with quadratic variation $\langle X(t) \rangle$ given by 
$$\langle X(t) \rangle = \int_0^t w(t')^2 \dd t'.$$
By the theorem of Dambis, Dubins-Schwarz (see e.g. \cite[Theorem V.1.7]{RevuzYor}) there exists a Brownian motion $(W_X(t))$ (which is not adapted to the original filtration and possibly even defined on an extension of the probability space), such that
$$W_X\left(\langle X(t)\rangle\right) = X(t).$$
By well-known growth bounds on the Brownian motion, this implies
\begin{align*}
    \lim_{t \to \infty} \frac{1}{t}X(t) &= \lim_{t \to \infty} \frac{1}{t} W_X(\langle X(t)\rangle)\\
    &\leq \limsup_{t \to \infty} \frac{\langle X(t)\rangle}{t} \lim_{t \to \infty} \frac{1}{\langle X(t)\rangle}W_X(\langle X(t)\rangle)\\
    &= \limsup_{t \to \infty} \frac{1}{t} \int_0^t w(t')^2\dd t' \lim_{t \to \infty} \frac{1}{t}W_X(t)\\
    &=0.
\end{align*}
\end{proof}
\begin{proof}[Proof of Proposition \ref{prop:fkfsdlc}]
From Theorem \ref{theo:LE-sdlc} we get
$$2\alpha~ \Psi\left(\frac{b'^2\sigma'^2}{2\alpha^2a}\right) = \lambdahat(\alphahat, \bhat, \sigmahat) = \lim_{t \to \infty} \frac{1}{t} \log\left(\sqrt{\hat s(t)^2 + \hat \theta(t)^2}\right) = \lim_{t \to \infty} \frac{1}{t} \hat \Lambda(t).$$
Using the SDE representation from Proposition \ref{prop:fk} yields
$$\lim_{t \to \infty} \frac{1}{t} \hat \Lambda(t) = \lim_{t \to \infty} \frac{1}{t}\left[\int_0^t  h_3(\hat r,\hat \psi(t'),  0) \dd t' + \int_0^t  h_2(\hat r,\hat \psi(t'),  0) \dd \hat W_3(t')\right].$$
Since the function $h_2(\hat r,\cdot,  0)$ is bounded, the process $(h_2(\hat r,\hat \psi(t'),  0))$ satisfies the assumption of Lemma \ref{lemm:noiselim} and we obtain
$$\lim_{t \to \infty} \frac{1}{t} \int_0^t h_2(\hat r,\hat \psi(t') , 0) \dd \hat W_3(t') = 0.$$
Ergodicity now gives us
$$\lambdahat(\alphahat, \bhat, \sigmahat) =  \lim_{t \to \infty} \frac{1}{t} \hat \Lambda(t) = \lim_{t \to \infty} \frac{1}{t}\int_0^t h_3(\hat r,\hat \psi(t'),  0) \dd t' = \int_{S_1} h_3 (\hat r,\psi,  0) \hat \rho(\dd \psi),$$
since $\hat\rho$ is the unique stationary distribution of the Markov process $(\hat \psi(t))$ by Proposition \ref{prop:uniquestat} (ii).
\end{proof}

%%%%%%%%%%%%%%%%%%%%%%%%%%%%%%%
\subsection{Continuity of Lyapunov exponents}
\label{sec:continuity_LEs}
Our goal is to show the limit 
\begin{equation}\label{eq:goallim}
    \lim_{\epsilon \to 0} \lambda(\alpha, \beta, a, \epsilon^{-1}b', \epsilon\sigma') = 2\alpha ~\Psi\left( \frac{b'^2\sigma'^2}{2\alpha^2a} \right).
\end{equation}
For that purpose, we want to find a Furstenberg-Khasminskii formula for the Lyapunov exponent on the left hand side, similar to the expression for the right-hand side given in equation~\eqref{eq:FKformula_limit}.
While such a formula has been already given in \cite[Equation (19)]{deville}, we encounter a subtle issue here. 
In order to show \eqref{eq:goallim} along the formal outline in Section \ref{sec:formal derivation}, we express the variational process in $(s_{\epsilon}, \theta_{\epsilon})$-coordinates (cf.~\eqref{sde:eps}), which are transformed into the projective coordinate $\psi_{\epsilon}$ (see Proposition~\ref{prop:fk}).
%However, the na\"ive approach runs into a problem. 
Analogously to the proof of Proposition \ref{prop:fkfsdlc}, we can write
\begin{align*}
    \lambda(\alpha, \beta, a, \epsilon^{-1}b', \epsilon\sigma') &= \lim_{t \to \infty} \frac{1}{t} \Lambda_\epsilon(t)\\
    &= \lim_{t \to \infty} \frac{1}{t}\left[\int_0^t h_3(r_\epsilon(t'), \psi_\epsilon(t'), \epsilon) \dd t' +\int_0^t h_2(r_\epsilon(t'), \psi_\epsilon(t'), \epsilon) \dd W_\phi(t')\right].
\end{align*}
Now we would like to use Lemma \ref{lemm:noiselim} in order to show that the second integral is negligible in the limit. 
By ergodicity, the assumption of Lemma \ref{lemm:noiselim} is equivalent to $h_2(\cdot, \cdot, \epsilon) \in L^2_{\rho_\epsilon}$, where $h_2$ is given by~\eqref{eq:defigihi}.
%It is not clear to the authors whether this holds true or not. 
Since it seems out of reach to make explicit statements about the distribution $\rho_\epsilon$ beyond the fact that its marginal in the $r$-direction has density $\xi_\epsilon$, one is forced to rely on the estimate $h_2(r, \psi, \epsilon) \leq \sup_{\psi'}h_2(r, \psi', \epsilon)$, which leads to the bound  $|h_2(r,\psi, \epsilon)| \lesssim 1+\epsilon^{-2}r^{-1}$ given in Lemma \ref{lemm:bounds}. 
This, however, is not sufficient for obtaining the assumption of Lemma \ref{lemm:noiselim} since we clearly have $(r \mapsto 1+\epsilon^{-2}r^{-1}) \notin L^1_{\xi_\epsilon}$. Analogously, for the first integral above, we would hope to have $h_3(\cdot, \cdot, \epsilon) \in L^1_{\rho_\epsilon}$ in order to use ergodicity; however, we encounter a very similar problem.

We want to point out two aspects of this issue. Firstly, in both cases the problem arises due to the singularity at $r=0$. 
Secondly, neither of these problems has been present in previous works like \cite{deville}, as they only arise in the rescaled coordinates $(s_\epsilon, \theta_\epsilon) = (s_\epsilon, \epsilon\vartheta_\epsilon)$ (and their polar representation $(\psi_\epsilon, \Lambda_\epsilon)$) 
and are not present in the original coordinates $(s_\epsilon, \vartheta_\epsilon)$ (and their polar representation $(\Tilde\psi_\epsilon, \Tilde\Lambda_\epsilon)$, see Proposition~\ref{prop:fktilde}). 
However, the coordinate rescaling is crucial to obtain the aspired limit, as outlined in section \ref{sec:formal derivation}. 
In order to fix this issue we will introduce coordinates for the variational process (\ref{sde:vari}) which behave like $(s_\epsilon, \vartheta_\epsilon)$ whenever $r_\epsilon$ is close to 0 and behave like $(s_\epsilon, \theta_\epsilon)$ whenever $r_\epsilon$ is close to $\hat r$. Thereby we avoid the integrabitlity problem around the singularity at the origin, while having the ``correct'' coordinate system for the $\epsilon \to 0$-limit for a significant portion of time. We are now ready to finally prove Theorem~\ref{theo:main}.

\begin{proof}[Proof of Theorem \ref{theo:main}]
Note that Theorem \ref{theo:bif} already covers the second part of the statement. Thus it only remains to show \eqref{eq:mainLimit}.

We start by partitioning the positive real axis into four intervals $I_1 \cup I_2 \cup I_3 \cup I_4 = \Rp$ defined by 
\begin{align*}
    I_1&=\left(0,\frac{1}{3}\hat r\right], & I_2&=\left(\frac{1}{3}\hat r,\frac{2}{3}\hat r\right], & I_3&=\left(\frac{2}{3}\hat r, 2 \hat r\right],  & I_4&=\left(2\hat r, \infty\right).
\end{align*}
Next, we define a function $\chi: \Rp \to [0,1]$ (see Figure~\ref{fig:chiplot}) by
$$\chi(r) := \begin{cases}
0& \text{ if } r \in I_1,\\
\frac{3}{\hat r}r-1& \text{ if }r \in I_2,\\
1& \text{ if } r \in I_3 \cup I_4,
\end{cases}$$
and a process $(\Lambda_\epsilon^*(t))$ by
$$\Lambda^*_\epsilon(t) := [1-\chi(r_\epsilon(t))]\Tilde \Lambda_\epsilon(t) + \chi(r_\epsilon(t))\Lambda_\epsilon(t).$$
%%%%%%%%%%%
\begin{figure}
    \centering
    \begin{tikzpicture}
 
\begin{axis}[
    xmin = -0.5, xmax = 7.05,
    ymin = -0.6, ymax = 2,
    axis x line= center,
    axis y line= center,
    xtick distance = 10,
    ytick distance = 20,
    width = 0.9\textwidth,
    height = 0.4\textwidth,
    xlabel = {$r$},
    ylabel = {},
    axis line style={draw=none}]
    \draw[->,thin, black] (0,0)--(7.05,0);
\draw[->,thin, black] (0,0)--(0,1.5);
    \addplot[ thin,red] file[skip first] {xidata.dat};

\draw[-, thin, black, dashed] (1,0)--(1,1.5);
\draw[-, thin, black, dashed] (2,0)--(2,1.5);
\draw[-, thin, black] (3,0)--(3,1.5);
\draw[-, thin, black, dashed] (6,0)--(6,1.5);
\node[anchor=north](r1) at (axis cs: 1,0) {$\frac{1}{3}\hat r$};
 \node[anchor=north](r2) at (axis cs: 2,0) {$\frac{2}{3}\hat r$};
 \node[anchor=north](r3) at (axis cs: 3,0) {$\hat r$};
 \node[anchor=north](r4) at (axis cs: 6,0) {$2\hat r$};
 \draw[-, blue, line width = 0.5mm] (0,0)--(1,0);
 \draw[-, blue, line width = 0.5mm] (1,0)--(2,1);
 \draw[-, blue, line width = 0.5mm] (2,1)--(7,1);
    \draw [decorate,
    decoration = {brace}] (0.05,1.5) --  (0.95,1.5) node [midway, above] {$I_1$};
    \draw [decorate,
    decoration = {brace}] (1.05,1.5) --  (1.95,1.5) node [midway, above] {$I_2$};
    \draw [decorate,
    decoration = {brace}] (2.05,1.5) --  (5.95,1.5) node [midway, above] {$I_3$};
    \draw [decorate,
    decoration = {brace}] (6.05,1.5) --  (7.1,1.5) node [midway, above] {$I_4$};
 \node[anchor=north](text1) at (axis cs: 5,1) {\color{blue}$\chi(r)$}; 
 \node[anchor=west](text2) at (axis cs: 3.3,0.5) {\color{red}$\xi_\epsilon{}(r)$};
\end{axis}
 
\end{tikzpicture}
    \caption{The function $\chi$ plotted against the density $\xi_\epsilon$.}
    \label{fig:chiplot}
\end{figure}
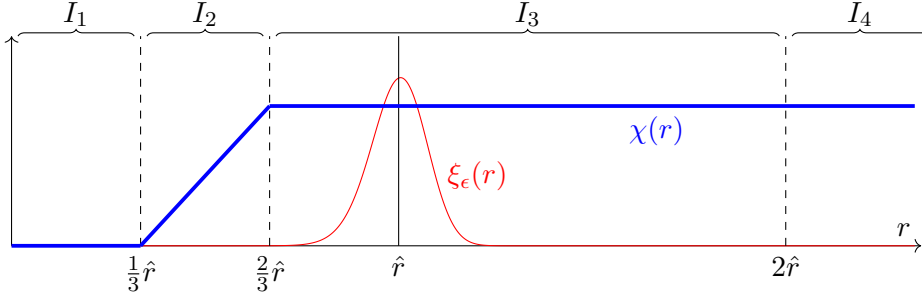
%%%%%%%%%%%%%%%%%
By (\ref{eq:FTLElim}) we have
$$\lambda(\alpha, \beta, a, \epsilon^{-1} b', \epsilon \sigma') = \lim_{t \to \infty}\frac{1}{t} \Lambda_\epsilon(t)=\lim_{t \to \infty}\frac{1}{t} \Tilde \Lambda_\epsilon(t),$$
and thus also
$$\lambda(\alpha, \beta, a, \epsilon^{-1} b', \epsilon \sigma') = \lim_{t \to \infty}\frac{1}{t} \Lambda_\epsilon^*(t).$$
Our goal now is to show
\begin{equation}\label{eq:proofgoal}
    \lim_{\epsilon \to 0} \frac{1}{t} \Lambda_\epsilon^*(t) = 2\alpha~ \Psi\left(\frac{b'^2\sigma'^2}{2\alpha^2a}\right),
\end{equation}
which will complete the proof.
By the definition of $(\Lambda^*_\epsilon(t))$, Propositions \ref{prop:fk}, \ref{prop:fktilde} and equation (\ref{sde:polar2}), we have
\begin{align*}
    \Lambda^*_\epsilon(t) =&\int_0^t [1-\chi(r_\epsilon(t'))] \dd \Tilde \Lambda_\epsilon(t') +\int_0^t \chi(r_\epsilon(t')) \dd \Lambda_\epsilon(t')\\
   &+\int_0^t \chi'(r_\epsilon(t'))\left(\Lambda_\epsilon(t')-\Tilde \Lambda_\epsilon(t')\right) \dd  r_\epsilon(t') \\
   =&\underbrace{\int_0^t [1-\chi(r_\epsilon(t'))]\Tilde h_1(r_\epsilon(t'),\Tilde \psi_\epsilon(t'),  \epsilon) \dd t'}_{=:\RN 2(t)}+
   \underbrace{\int_0^t \chi(r_\epsilon(t')) h_3( r_\epsilon(t'),\psi_\epsilon(t'),  \epsilon) \dd t'}_{=:\RN 1(t)}\\
   &+\underbrace{\int_0^t \chi(r_\epsilon(t')) h_2( r_\epsilon(t'),\psi_\epsilon(t'),  \epsilon)\dd W_\phi(t')}_{=:\RN 4(t)}\\
   &+ \underbrace{\int_0^t  \frac{3}{\hat{r}}\mathds 1_{I_2 }(r_\epsilon(t'))\left(\Lambda_\epsilon(t')-\Tilde \Lambda_\epsilon(t')\right) \left(\alpha r_\epsilon(t) - a r_\epsilon(t)^3 + \frac{\epsilon^2\sigma'^2}{2r_\epsilon(t)}\right) \dd  t'}_{=:\RN 3(t)}\\
   &+\underbrace{\int_0^t  \frac{3}{\hat{r}}\mathds 1_{I_2 }(r_\epsilon(t'))\left(\Lambda_\epsilon(t')-\Tilde \Lambda_\epsilon(t')\right) \epsilon\sigma' \dd W_r(t')}_{=:\RN 5(t)}.
\end{align*}
Here, we have made use of the fact that the quadratic covariations
$\langle r_\epsilon, \Lambda_\epsilon \rangle = \langle r_\epsilon, \Tilde \Lambda_\epsilon \rangle = 0$ vanish. 
In the following, we will determine the corresponding limits for the time averages of $\RN 1(t), \dots, \RN 5(t)$ separately. In particular we will show
$$\lim_{\epsilon \to 0}\lim_{t \to \infty} \frac{1}{t} \RN 1(t) = 2\alpha~ \Psi\left(\frac{b'^2\sigma'^2}{2\alpha^2a}\right),$$
as well as
$$\lim_{\epsilon \to 0}\limsup_{t \to \infty} \frac{1}{t} |\RN 2(t)| =\lim_{\epsilon \to 0}\limsup_{t \to \infty} \frac{1}{t} |\RN 3(t)| =\lim_{\epsilon \to 0}\limsup_{t \to \infty} \frac{1}{t} |\RN 4(t)| =\lim_{\epsilon \to 0}\limsup_{t \to \infty} \frac{1}{t} |\RN 5(t)| = 0.$$
Note that together these equations imply \eqref{eq:proofgoal}, thereby finishing the proof.

i) $\mathbf{\RN 1(t)}$: we  split the integral into
\begin{align*}
    \RN 1(t) =& \int_0^t \chi(r_\epsilon(t'))\mathds 1_{I_2\cup I_3}(r_\epsilon(t')) h_3( r_\epsilon(t'),\psi_\epsilon(t'),  \epsilon) \dd t' \\
    &+ \int_0^t \chi(r_\epsilon(t'))\mathds 1_{I_4}(r_\epsilon(t')) h_3( r_\epsilon(t'),\psi_\epsilon(t'),  \epsilon) \dd t'.
\end{align*}
First we will bound the second summand. Using Lemma \ref{lemm:bounds} ii) we get
\begin{align*}
   \left|\int_0^t \chi(r_\epsilon(t'))\mathds 1_{I_4}(r_\epsilon(t')) h_3(r_\epsilon(t'), \psi_\epsilon(t'),  \epsilon) \dd t'\right|
    &\leq \int_0^t \mathds 1_{I_4}(r_\epsilon(t')) \left|h_3(r_\epsilon(t'), \psi_\epsilon(t'),  \epsilon)\right| \dd t'\\
    &\lesssim \int_0^t \mathds 1_{I_4}(r_\epsilon(t')) \left(1+r_\epsilon(t')^2+\epsilon^{-4}r_\epsilon(t')^{-2}\right) \dd t'\\
    &\lesssim \int_0^t \mathds 1_{I_4}(r_\epsilon(t'))\epsilon^{-4} r_\epsilon(t')^2 \dd t'
\end{align*}
The function $f(r) := \mathds 1_{I_4}(r) r^2$ satisfies all assumptions of Lemma \ref{lemm:concentration}. Thus we can use ergodicity get the following estimate.
\begin{align*}
    &\phantom{=~}\lim_{\epsilon \to 0}\limsup_{t \to \infty}\frac{1}{t}\left|\int_0^t \chi(r_\epsilon(t'))\mathds 1_{I_4}(r_\epsilon(t')) h_3( r_\epsilon(t'),\psi_\epsilon(t'),  \epsilon) \dd t'\right|\\
    &\lesssim \lim_{\epsilon \to 0}\epsilon^{-4} \lim_{t \to \infty} \frac{1}{t} \int_0^t \mathds 1_{I_4}(r_\epsilon(t'))r_\epsilon(t')^2\dd t'\\
    &=\lim_{\epsilon \to 0} \epsilon^{-4}\int_0^\infty \mathds 1_{I_4}(r) r^2 \xi_\epsilon(r) \dd r\\
    &\leq \lim_{\epsilon \to 0} \epsilon^{-4}\exp\left(-\frac{1}{\epsilon}\right)\\
    &=0.
\end{align*}

Concerning the first summand, we obtain by ergodicity
\begin{align*}
    \lim_{\epsilon \to 0} \lim_{t \to \infty} \frac{1}{t} \RN 1(t) &= \lim_{\epsilon \to 0} \lim_{t \to \infty} \frac{1}{t}\int_0^t \chi(r_\epsilon(t'))\mathds 1_{I_2\cup I_3}(r_\epsilon(t')) h_3(r_\epsilon(t'), \psi_\epsilon(t'),  \epsilon) \dd t'\\
    &= \lim_{\epsilon \to 0} \int_{(I_2\cup I_3)\times S^1} \chi(r) h_3( r,\psi,  \epsilon) \rho_\epsilon(\dd r,\dd \psi ).
\end{align*}
Since $(I_2\cup I_3)\times S^1$ is a compact domain, the function $h_3(\cdot, \cdot, \epsilon)$ converges uniformly to $h_3(\cdot, \cdot, 0)$ as $\epsilon \to 0$. Using this fact together with the weak convergence established in Lemma \ref{lemm:statMeasConv} and the expression \eqref{eq:FKformula_limit}, we get
\begin{align*}
    \lim_{\epsilon \to 0} \lim_{t \to \infty} \frac{1}{t} \RN 1(t) &= \lim_{\epsilon \to 0} \int_{(I_2\cup I_3)\times S^1} \chi(r) h_3( r,\psi,  \epsilon) \rho_\epsilon(\dd r,\dd \psi)\\
    &= \int_{(I_2\cup I_3)\times S^1} \chi(r) h_3( r,\psi,  0) \delta_{\hat r}(\dd r)\hat \rho(\dd \psi) \\
    &=\int_{S^1} h_3( \hat r,\psi,  0) \hat \rho(\dd \psi)\\
    &=2\alpha ~\Psi\left( \frac{b'^2\sigma'^2}{2\alpha^2a} \right).
\end{align*}
 ii) $\mathbf{\RN 2(t)}$: similarly to the argument for the second summand in i), we  first use Lemma \ref{lemm:bounds} iii) and then ergodicity to get
\begin{align*}
    \lim_{\epsilon \to 0}\limsup_{t \to \infty}\frac{1}{t}|\RN 2 (t)| &= \lim_{\epsilon \to 0}\limsup_{t \to \infty}\frac{1}{t}\left|\int_0^t [1-\chi(r_\epsilon(t'))]\Tilde h_1(r_\epsilon(t'),\Tilde \psi_\epsilon(t'),  \epsilon) \dd t'\right|\\
    &\leq \lim_{\epsilon \to 0}\limsup_{t \to \infty}\frac{1}{t}\int_0^t [1-\chi(r_\epsilon(t'))]\left|\Tilde h_1(r_\epsilon(t'),\Tilde \psi_\epsilon(t'),  \epsilon) \right|\dd t'\\
    &\lesssim \lim_{\epsilon \to 0}\lim_{t \to \infty}\frac{1}{t}\int_0^t [1-\chi(r_\epsilon(t'))] \frac{1}{\epsilon}r_\epsilon(t)^2\dd t'\\
    &=\lim_{\epsilon \to 0}\frac{1}{\epsilon} \int_0^\infty [1-\chi(r)]r^2\xi_\epsilon(r) \dd r.
\end{align*}
Again, the function $f(r) := [1-\chi(r)]r^2$ satisfies all the assumptions in Lemma \ref{lemm:concentration} and we obtain
$$\lim_{\epsilon \to 0}\limsup_{t \to \infty}\frac{1}{t}|\RN 2 (t)|\leq \lim_{\epsilon \to 0}\frac{1}{\epsilon} \int_0^\infty [1-\chi(r)]r^2\xi_\epsilon(r) \dd r \leq\lim_{\epsilon \to 0}\frac{1}{\epsilon}\exp\left(-\frac{1}{\epsilon}\right)= 0.$$

iii) $\mathbf{\RN 3 (t)}$: note that the term 
$$\mathds 1_{I_2 }(r_\epsilon(t))\left(\alpha r_\epsilon(t) - a r_\epsilon(t)^3 + \frac{\epsilon^2\sigma'^2}{2r_\epsilon(t)}\right)$$
is bounded uniformly in $\epsilon$ for $\epsilon \in (0,1]$. Together with the bound \eqref{ineq:tilde}, this allows for the estimate
\begin{align*}
    &\phantom{=~}\lim_{\epsilon\to 0} \limsup_{t \to \infty} \frac{1}{t}|\RN 3 (t)| \\
    &=\lim_{\epsilon\to 0} \limsup_{t \to \infty} \frac{1}{t} \left|\int_0^t  \frac{3}{\hat{r}}\mathds 1_{I_2 }(r_\epsilon(t'))\left(\Lambda_\epsilon(t')-\Tilde \Lambda_\epsilon(t')\right) \left(\alpha r_\epsilon(t) - a r_\epsilon(t)^3 + \frac{\epsilon^2\sigma'^2}{2r_\epsilon(t)}\right) \dd  t'\right|\\
    &\lesssim \lim_{\epsilon\to 0} \limsup_{t \to \infty} \frac{1}{t} \int_0^t \mathds 1_{I_2 }(r_\epsilon(t'))\left|\Lambda_\epsilon(t')-\Tilde \Lambda_\epsilon(t')\right|\dd  t'\\
    &\leq \lim_{\epsilon\to 0} -\log(\epsilon) \int_0^\infty \mathds 1_{I_2 }(r) \xi_\epsilon(r)\dd  r.
\end{align*}
Now we can again use Lemma \ref{lemm:concentration} to obtain
\begin{align*}
    \lim_{\epsilon\to 0} \limsup_{t \to \infty} \frac{1}{t}|\RN 3 (t)|&\leq \lim_{\epsilon\to 0} -\log(\epsilon) \int_0^\infty \mathds 1_{I_2 }(r) \xi_\epsilon(r)\dd  r\\
    &\leq \lim_{\epsilon\to 0} -\log(\epsilon) \exp\left(-\frac{1}{\epsilon}\right)\\
    &= 0.
\end{align*}

iv) $\mathbf{\RN 4(t)}$: as a consequence of Lemma \ref{lemm:bounds} (i), we have
$$\chi(r)h_2(r,\psi,  \epsilon) \leq \mathds 1_{I_2\cup I_3 \cup I_4}(r)h_2(r,\psi,  \epsilon) \lesssim \mathds 1_{I_2\cup I_3 \cup I_4}(r)(1 + \epsilon^{-2}r^{-1}) \lesssim 1 + \epsilon^{-2}.$$
Thus, in particular, the process $(\chi(r_\epsilon(t)) h_2(  r_\epsilon(t), \psi_\epsilon(t),\epsilon))$ is bounded for every $\epsilon>0$. Therefore Lemma \ref{lemm:noiselim} gives
$$\lim_{t \to \infty} \frac{1}{t} \RN 4 (t) = \lim_{t \to \infty} \frac{1}{t}\int_0^t \chi(r_\epsilon(t')) h_2( r_\epsilon(t'), \psi_\epsilon(t'), \epsilon)\dd W_\phi(t') = 0,$$
for each $\epsilon>0.$

v) $\mathbf{\RN 5(t)}$:  using the estimate~\eqref{ineq:tilde}, the process $(\mathds 1_{I_2 }(r_\epsilon(t'))(\Lambda_\epsilon(t')-\Tilde \Lambda_\epsilon(t')))$ is bounded. Hence, similarly to iv), Lemma \ref{lemm:noiselim} yields
$$\lim_{t \to \infty} \frac{1}{t} \RN 5 (t) = \frac{3\epsilon\sigma'}{\hat{r}}\lim_{t \to \infty} \frac{1}{t}\int_0^t  \mathds 1_{I_2 }(r_\epsilon(t'))\left(\Lambda_\epsilon(t')-\Tilde \Lambda_\epsilon(t')\right)\dd W_r(t') = 0,$$
for each $\epsilon>0.$
\end{proof}

\appendix
\section{Polar coordinates}\label{app:polar}
This appendix contains the proofs for Propositions \ref{prop:polarSDE} and \ref{prop:variSDE} and for Lemma \ref{lemm:proj}.
\begin{proof}[Proof of Proposition \ref{prop:polarSDE}]
First we will derive Statonovish SDEs for $(r(t))$ and $(\phi(t))$. Note that, by definition,
$$r(t) = \|Z(t)\| = \sqrt{Z_1(t)^2+Z_2(t)^2}.$$
By the chain-rule for Stratonovich SDEs, we have
\begin{align*}
    \dd r(t) =& \frac{Z_1(t)}{r(t)}\circ \dd Z_1(t) + \frac{Z_2(t)}{r(t)}\circ \dd Z_2(t) \\
    =& \frac{Z_1(t)}{r(t)}\left[\alpha Z_1(t) - \beta Z_2(t) - ar(t)^2Z_1(t) +br(t)^2Z_2(t)\right]\dd t +\sigma\frac{Z_1(t)}{r(t)}\circ \dd W_1(t)\\
    &+\frac{Z_2(t)}{r(t)}\left[\beta Z_1(t) + \alpha Z_2(t) - br(t)^2Z_1(t) -ar(t)^2Z_2(t)\right]\dd t +\sigma\frac{Z_2(t)}{r(t)}\circ \dd W_2(t)\\
    =& \frac{1}{r(t)}\left[\alpha \left(Z_1(t)^2 + Z_2(t)^2\right) - ar(t)^2\left(Z_1^2 + Z_2^2\right)\right]\dd t \\
    &+ \sigma [\cos(\phi(t))\circ \dd W_1(t) + \sin(\phi(t))\circ \dd W_2(t)]\\
    =& \left(\alpha r(t) -ar(t)^3\right)\dd t + \sigma [\cos(\phi(t))\circ \dd W_1(t) + \sin(\phi(t))\circ \dd W_2(t)].
\end{align*}
For $(\phi(t))$ we first show
\begin{align}\label{eq:radderi}
    \frac{-Z_2(t)}{r(t)^2}\circ \dd Z_1(t) + \frac{Z_1(t)}{r(t)^2} \circ \dd Z_2(t) =& \frac{-\sin(\phi(t))}{r(t)}\left[-r(t)\sin(\phi(t))\circ \dd \phi(t) +\cos(\phi(t))\dd r(t)\right]\nonumber\\
    &+\frac{\cos(\phi(t))}{r(t)}\left[r(t)\cos(\phi(t))\circ \dd \phi(t) +\sin(\phi(t))\dd r(t)\right]\nonumber\\
    =& \sin^2(\phi(t))\circ \dd \phi(t) + \cos^2(\phi(t))\circ \dd \phi(t) \nonumber\\
    =& \dd \phi(t).
\end{align}
Now, using the chain-rule again, we can compute
\begin{align*}
    \dd \phi(t) =& \frac{-Z_2(t)}{r(t)^2}\circ \dd Z_1(t) + \frac{Z_1(t)}{r(t)^2} \circ \dd Z_2(t) \\
    =& \frac{-Z_2(t)}{r(t)^2}\left[\alpha Z_1(t) - \beta Z_2(t) - ar(t)^2Z_1(t) +br(t)^2Z_2(t)\right]\dd t - \sigma \frac{Z_2(t)}{r(t)}\circ \dd W_1(t)\\
    &+ \frac{Z_1(t)}{r(t)^2}\left[\beta Z_1(t) + \alpha Z_2(t) - br(t)^2Z_1(t) -ar(t)^2Z_2(t)\right]\dd t + \sigma \frac{Z_1(t)}{r(t)}\circ \dd W_2(t)\\
    =& \frac{1}{r(t)^2}\left[\beta\left(Z_1(t)^2+Z_2(t)^2\right) - br(t)^2\left(Z_1(t)^2+Z_2(t)^2\right)\right] \dd t \\
    &+ \sigma\left[-\sin(\phi(t)) \circ \dd W_1(t) + \cos(\phi(t))\circ \dd W_2(t)\right]\\
    =&\left(\beta-br(t)^2\right)\dd t + \sigma\left[-\sin(\phi(t)) \circ \dd W_1(t) + \cos(\phi(t))\circ \dd W_2(t)\right].
\end{align*}
It remains to compute the relevant quadratic co-variations for the It\^o-Statonovish correction terms. Using the notation $\langle \cdot, \cdot \rangle$ for the quadratic co-variation of two semi-martingales, we get
\begin{align*}
    \dd \left\langle\cos(\phi(t)), W_1(t)\right\rangle =& -\sin(\phi(t))\dd\langle\phi(t), W_1(t)\rangle \\
    =& \frac{\sigma}{r(t)}\sin^2(\phi(t))\dd t,\\
    \dd \left\langle\sin(\phi(t)), W_2(t)\right\rangle =& \cos(\phi(t))\dd\langle\phi(t), W_2(t)\rangle \\
    =& \frac{\sigma}{r(t)}\cos^2(\phi(t))\dd t,\\
    \dd \left\langle\frac{\sin(\phi(t))}{r(t)}, W_1(t)\right\rangle =& \frac{\cos(\phi(t))}{r(t)}\dd\left\langle\phi(t), W_1(t)\right\rangle - \frac{\sin(\phi(t))}{r(t)^2}\dd\left\langle r(t), W_1(t)\right\rangle\\
    =& -\frac{\sigma}{r(t)^2}\cos(\phi(t))\sin(\phi(t))\dd t - \frac{\sigma}{r(t)^2}\cos(\phi(t))\sin(\phi(t))\dd t\\
    =&-\frac{2\sigma}{r(t)^2}\cos(\phi(t))\sin(\phi(t))\dd t,\\
    \dd \left\langle\frac{\cos(\phi(t))}{r(t)}, W_2(t)\right\rangle =& -\frac{\sin(\phi(t))}{r(t)}\dd\left\langle\phi(t), W_2(t)\right\rangle -\frac{\cos(\phi(t))}{r(t)^2}\dd \langle r(t),W_2(t)\rangle \\
    =& -\frac{\sigma}{r(t)^2}\cos(\phi(t))\sin(\phi(t))\dd t - \frac{\sigma}{r(t)^2}\cos(\phi(t))\sin(\phi(t))\dd t,\\
    =&-\frac{2\sigma}{r(t)^2}\cos(\phi(t))\sin(\phi(t))\dd t.
\end{align*}
Finally, we can use these to obtain It\^o-SDEs for $(r(t))$ and $(\phi(t))$. We have
\begin{align*}
    \dd r(t) =& \left(\alpha r(t) -ar(t)^3\right)\dd t + \sigma [\cos(\phi(t))\circ \dd W_1(t) + \sin(\phi(t))\circ \dd W_2(t)] \\
    =&\left(\alpha r(t) -ar(t)^3\right)\dd t +\frac{\sigma}{2r(t)} \left[\dd \langle \cos(\phi(t)), W_1(t)\rangle + \dd \langle \sin(\phi(t)), W_2(t)\rangle \right] \\
    &+ \sigma [\cos(\phi(t)) \dd W_1(t) + \sin(\phi(t)) \dd W_2(t)] \\
    =&\left(\alpha r(t) -ar(t)^3\right)\dd t +\frac{\sigma}{2} \left[\frac{\sigma}{r(t)} \sin^2(\phi(t))\dd t+\frac{\sigma}{r(t)} \cos^2(\phi(t))\dd t\right] \\
    &+ \sigma [\cos(\phi(t)) \dd W_1(t) + \sin(\phi(t)) \dd W_2(t)] \\
    =&\left(\alpha r(t) -ar(t)^3 + \frac{\sigma^2}{2r(t)}\right)\dd t + \sigma [\cos(\phi(t)) \dd W_1(t) + \sin(\phi(t)) \dd W_2(t)],
\end{align*}
as well as,
\begin{align*}
    \dd \phi(t) =& \left(\beta-br(t)^2\right)\dd t + \sigma\left[-\sin(\phi(t)) \circ \dd W_1(t) + \cos(\phi(t))\circ \dd W_2(t)\right] \\
     =&  \left(\beta-br(t)^2\right)\dd t + \frac{\sigma}{2}\left[-\dd\left\langle \frac{\sin(\phi(t))}{r(t)}, W_1(t)\right\rangle + \dd \left\langle\frac{\cos(\phi(t))}{r(t)},W_2(t)\right\rangle\right]\\
    &+ \sigma\left[-\sin(\phi(t)) \dd W_1(t) + \cos(\phi(t))\dd W_2(t)\right] \\
    =&  \left(\beta-br(t)^2\right)\dd t + \frac{\sigma}{2}\left[-\frac{2\sigma}{r(t)^2}\cos(\phi(t))\sin(\phi(t))\dd t+\frac{2\sigma}{r(t)^2}\cos(\phi(t))\sin(\phi(t))\dd t\right]\\
    &+ \sigma\left[-\sin(\phi(t)) \dd W_1(t) + \cos(\phi(t))\dd W_2(t)\right] \\
    =&  \left(\beta-br(t)^2\right)\dd t
    + \sigma\left[-\sin(\phi(t)) \dd W_1(t) + \cos(\phi(t))\dd W_2(t)\right].
\end{align*}
This finishes the proof.
\end{proof}

\begin{proof}[Proof of Proposition \ref{prop:variSDE}]
Recall that $(s(t))$ and $(\vartheta(t))$ were defined by
$$s(t) := \begin{pmatrix}\cos(\phi(t)) \\ \sin(\phi(t))\end{pmatrix}^T Y(t)$$
and
$$\vartheta(t) := \begin{pmatrix}-\sin(\phi(t)) \\ \cos(\phi(t))\end{pmatrix}^T Y(t).$$
By the integration by parts formula for Stratonovich-integrals, we get
$$\dd s(t) = \begin{pmatrix}\cos(\phi(t)) \\ \sin(\phi(t))\end{pmatrix}^T \circ\dd Y(t) + Y(t)^T \circ \dd\begin{pmatrix}\cos(\phi(t)) \\ \sin(\phi(t))\end{pmatrix}.$$
For the sake of readability, we will compute the summands separately. Firstly, we have
\begin{align*}
    \begin{pmatrix}\cos(\phi(t)) \\ \sin(\phi(t))\end{pmatrix}^T \circ\dd Y(t) 
    =&\begin{pmatrix}\cos(\phi(t)) \\ \sin(\phi(t))\end{pmatrix}^T\begin{pmatrix}\alpha & -\beta \\ \beta & \alpha \end{pmatrix}Y(t) \dd t\\
    &- \|Z(t)\|^2\begin{pmatrix}\cos(\phi(t)) \\ \sin(\phi(t))\end{pmatrix}^T\begin{pmatrix}a & -b \\ b & a \end{pmatrix}Y(t) \dd t\\ 
    &-2\begin{pmatrix}\cos(\phi(t)) \\ \sin(\phi(t))\end{pmatrix}^T \begin{pmatrix}a & -b \\ b & a \end{pmatrix}Z(t)Z(t)^TY(t) \dd t \\
    =&\alpha \begin{pmatrix}\cos(\phi(t)) \\ \sin(\phi(t))\end{pmatrix}^TY(t)\dd t - \beta \begin{pmatrix}-\sin(\phi(t)) \\ \cos(\phi(t))\end{pmatrix}^T Y(t)\dd t\\
    &-r(t)^2a \begin{pmatrix}\cos(\phi(t)) \\ \sin(\phi(t))\end{pmatrix}^TY(t)\dd t + r(t)^2b \begin{pmatrix}-\sin(\phi(t)) \\ \cos(\phi(t))\end{pmatrix}^T Y(t)\dd t\\
    &-2r(t)^2\begin{pmatrix}\cos(\phi(t)) \\ \sin(\phi(t))\end{pmatrix}^T \begin{pmatrix}a & -b \\ b & a \end{pmatrix}\begin{pmatrix}\cos(\phi(t)) \\ \sin(\phi(t))\end{pmatrix}\begin{pmatrix}\cos(\phi(t)) \\ \sin(\phi(t))\end{pmatrix}^TY(t) \dd t\\
    =& \left(\alpha s(t) - \beta \vartheta(t)-ar(t)^2s(t)+br(t)^2\vartheta(t)-2ar(t)^2s(t)\right)\dd t\\
    =& \left(\alpha s(t) - \beta \vartheta(t)-3ar(t)^2s(t)+br(t)^2\vartheta(t)\right)\dd t.
\end{align*}
Secondly, we obtain
\begin{align*}
     Y(t)^T \circ \dd\begin{pmatrix}\cos(\phi(t)) \\ \sin(\phi(t))\end{pmatrix} &= \begin{pmatrix}-\sin(\phi(t)) \\ \cos(\phi(t))\end{pmatrix}^T Y(t)\circ \dd \phi(t)\\
     &= \left(\beta -br(t)^2\right)\vartheta(t)\dd t +\frac{\sigma}{r(t)}\vartheta(t)\circ\dd W_\phi(t).
\end{align*}
Together this gives
$$\dd s(t) = \left(\alpha-3ar(t)^2\right)s(t)\dd t+\frac{\sigma}{r(t)}\vartheta(t)\circ\dd W_\phi(t).$$
We proceed analogously for $(\vartheta(t))$. Integration by parts gives
$$\dd \vartheta(t) = \begin{pmatrix}-\sin(\phi(t)) \\ \cos(\phi(t))\end{pmatrix}^T \circ\dd Y(t) + Y(t)^T \circ \dd\begin{pmatrix}-\sin(\phi(t)) \\ \cos(\phi(t))\end{pmatrix}.$$
Computing the summands separately again we get
\begin{align*}
    \begin{pmatrix}-\sin(\phi(t)) \\ \cos(\phi(t))\end{pmatrix}^T \circ\dd Y(t) &= \begin{pmatrix}-\sin(\phi(t)) \\ \cos(\phi(t))\end{pmatrix}^T\begin{pmatrix}\alpha & -\beta \\ \beta & \alpha \end{pmatrix}Y(t) \dd t\\
    &- \|Z(t)\|^2\begin{pmatrix}-\sin(\phi(t)) \\ \cos(\phi(t))\end{pmatrix}^T\begin{pmatrix}a & -b \\ b & a \end{pmatrix}Y(t) \dd t\\ 
    &-2\begin{pmatrix}-\sin(\phi(t)) \\ \cos(\phi(t))\end{pmatrix}^T \begin{pmatrix}a & -b \\ b & a \end{pmatrix}Z(t)Z(t)^TY(t) \dd t \\
    =&\alpha \begin{pmatrix}-\sin(\phi(t)) \\ \cos(\phi(t))\end{pmatrix}^TY(t)\dd t + \beta \begin{pmatrix}\cos(\phi(t)) \\ \sin(\phi(t))\end{pmatrix}^T Y(t)\dd t\\
    &-r(t)^2a \begin{pmatrix}-\sin(\phi(t)) \\ \cos(\phi(t))\end{pmatrix}^TY(t)\dd t - r(t)^2b \begin{pmatrix}\cos(\phi(t)) \\ \sin(\phi(t))\end{pmatrix}^T Y(t)\dd t\\
    &-2r(t)^2\begin{pmatrix}-\sin(\phi(t)) \\ \cos(\phi(t))\end{pmatrix}^T \begin{pmatrix}a & -b \\ b & a \end{pmatrix}\begin{pmatrix}\cos(\phi(t)) \\ \sin(\phi(t))\end{pmatrix}\begin{pmatrix}\cos(\phi(t)) \\ \sin(\phi(t))\end{pmatrix}^TY(t) \dd t\\
    =& \left(\alpha \vartheta(t) + \beta s(t)-ar(t)^2\vartheta(t)-br(t)^2s(t)-2br(t)^2s(t)\right)\dd t\\
    =& \left(\alpha \vartheta(t) + \beta s(t)-ar(t)^2\vartheta(t)-3br(t)^2s(t)\right)\dd t,
\end{align*}
as well as,
\begin{align*}
     Y(t)^T \circ \dd\begin{pmatrix}-\sin(\phi(t)) \\ \cos(\phi(t))\end{pmatrix} &= -\begin{pmatrix}\cos(\phi(t)) \\ \sin(\phi(t))\end{pmatrix}^T Y(t)\circ \dd \phi(t)\\
     &= -\left(\beta -br(t)^2\right)s(t)\dd t -\frac{\sigma}{r(t)}s(t)\circ\dd W_\phi(t).
\end{align*}
Together this gives
$$\dd\vartheta(t) = \left(\alpha- ar(t)^2\right) \vartheta(t) \dd t - 2br(t)^2 s(t)\dd t- \frac{\sigma}{r(t)} s(t) \circ \dd W_\phi(t),$$
which finishes the proof.
\end{proof}
\begin{proof}[Proof of Lemma \ref{lemm:proj}]
Recall that $(\psi(t))$ and $(\Lambda(t))$ are defined by 
\begin{align}\label{eq:psilambdadefi}
    \psi(t) &:= 2 \tan^{-1}\left(\frac{v_2(t)}{v_1(t)}\right), & \Lambda(t) &:= \log\left(\sqrt{v_1(t)^2 + v_2(t)^2}\right).
\end{align}
For ease of notation, we will simply write $\psi,\Lambda, v_1, \dots$ instead of $\psi(t), \Lambda(t), v_1(t), \dots$ in the following.
Note that we have
\begin{align*}
    \cos\left(\frac{1}{2}\psi\right) =& \cos\left(\tan^{-1}\left(\frac{v_2}{v_1}\right)\right)\\
    =&\left(\frac{v_2^2}{v_1^2}+1\right)^{-\frac{1}{2}}\\
    =&\frac{|v_1|}{\sqrt{v_1^2+v_2^2}}
\end{align*}
and
\begin{align*}
    \sin\left(\frac{1}{2}\psi\right) =& \sin\left(\tan^{-1}\left(\frac{v_2}{v_1}\right)\right)\\
    =&\frac{v_2}{v_1}\left(\frac{v_2^2}{v_1^2}+1\right)^{-\frac{1}{2}}\\
    =&\frac{|v_1|v_2}{v_1\sqrt{v_1^2+v_2^2}}.
\end{align*}
This allows us to derive the identities
\begin{align*}
    1+\cos(\psi) =& 2 \cos^2\left(\frac{1}{2}\psi\right)\\
    =& \frac{v_1^2}{\sqrt{v_1^2+v_2^2}},
\end{align*}
\begin{align*}
    1-\cos(\psi) =& 2 \sin^2\left(\frac{1}{2}\psi\right)\\
    =& \frac{v_2^2}{\sqrt{v_1^2+v_2^2}}
\end{align*}
and
\begin{align*}
    \sin(\psi) =& 2 \cos\left(\frac{1}{2}\psi\right)\sin\left(\frac{1}{2}\psi\right)\\
    =& \frac{v_1v_2}{\sqrt{v_1^2+v_2^2}}.
\end{align*}
Applying the chain rule to (\ref{eq:psilambdadefi}) yields
\begin{align*}
    \dd \psi =& - \frac{2v_2}{v_1^2+v_2^2} \circ \dd v_1 +  \frac{2v_1}{v_1^2+v_2^2} \circ \dd v_2\\
    =& - \frac{2v_2}{v_1^2+v_2^2} \left(B^{(1)}_{1,1}(w)v_1 + B^{(1)}_{1,2}(w)v_2\right)\dd t - \frac{2v_2}{v_1^2+v_2^2} \left(B^{(2)}_{1,1}(w)v_1 + B^{(2)}_{1,2}(w)v_2\right)\circ\dd W\\
    &+ \frac{2v_1}{v_1^2+v_2^2} \left(B^{(1)}_{2,1}(w)v_1 + B^{(1)}_{2,2}(w)v_2\right)\dd t + \frac{2v_1}{v_1^2+v_2^2} \left(B^{(2)}_{2,1}(w)v_1 + B^{(2)}_{2,2}(w)v_2\right)\circ\dd W\\
    =&\frac{2}{{v_1^2+v_2^2}}\left(-B^{(1)}_{1,1}(w)v_1v_2  -B^{(1)}_{1,2}(w)v_2^2+B^{(1)}_{2,1}(w)v_1^2 + B^{(1)}_{2,2}(w)v_1 v_2\right)\dd t\\
    &+\frac{2}{{v_1^2+v_2^2}}\left(-B^{(2)}_{1,1}(w)v_1v_2 -B^{(2)}_{1,2}(w)v_2^2+B^{(2)}_{2,1}(w)v_1^2 + B^{(2)}_{2,2}(w)v_1 v_2\right)\circ\dd W\\
    =&\left[B^{(1)}_{2,1}(w)(1+\cos(\psi) - B^{(1)}_{1,2}(w)(1-\cos(\psi) + \left(B^{(1)}_{2,2}(w)-B^{(1)}_{1,1}(w)\right)\sin(\psi(t))\right]\dd t\\
    &+\left[B^{(2)}_{2,1}(w)(1+\cos(\psi) - B^{(2)}_{1,2}(w)(1-\cos(\psi) + \left(B^{(2)}_{2,2}(w)-B^{(2)}_{1,1}(w)\right)\sin(\psi(t))\right]\circ\dd W
\end{align*}
and
\begin{align*}
    \dd \Lambda =& \frac{v_1}{v_1^2+v_2^2} \circ \dd v_1 +  \frac{v_2}{v_1^2+v_2^2} \circ \dd v_2\\
    =& \frac{v_1}{v_1^2+v_2^2} \left(B^{(1)}_{1,1}(w)v_1 + B^{(1)}_{1,2}(w)v_2\right)\dd t + \frac{v_1}{v_1^2+v_2^2} \left(B^{(2)}_{1,1}(w)v_1 + B^{(2)}_{1,2}(w)v_2\right)\circ\dd W\\
    &+ \frac{v_2}{v_1^2+v_2^2} \left(B^{(1)}_{2,1}(w)v_1 + B^{(1)}_{2,2}(w)v_2\right)\dd t + \frac{v_2}{v_1^2+v_2^2} \left(B^{(2)}_{2,1}(w)v_1 + B^{(2)}_{2,2}(w)v_2\right)\circ\dd W\\
    =&\frac{1}{{v_1^2+v_2^2}}\left(B^{(1)}_{1,1}(w)v_1^2 + B^{(1)}_{1,2}(w)v_1v_2+B^{(1)}_{2,1}(w)v_1v_2 + B^{(1)}_{2,2}(w)v_2^2\right)\dd t\\
    &+\frac{1}{{v_1^2+v_2^2}}\left(B^{(2)}_{1,1}(w)v_1^2 + B^{(2)}_{1,2}(w)v_1v_2+B^{(2)}_{2,1}(w)v_1v_2 + B^{(2)}_{2,2}(w)v_2^2\right)\circ\dd W\\
    =&\left[B^{(1)}_{1,1}(w)(1+\cos(\psi) + B^{(1)}_{2,2}(w)(1-\cos(\psi) + \left(B^{(1)}_{1,2}(w)+B^{(1)}_{2,1}(w)\right)\sin(\psi(t))\right]\dd t\\
    &+\left[B^{(2)}_{1,1}(w)(1+\cos(\psi) + B^{(2)}_{2,2}(w)(1-\cos(\psi) + \left(B^{(2)}_{1,2}(w)+B^{(2)}_{2,1}(w)\right)\sin(\psi(t))\right]\circ\dd W.
\end{align*}
This finishes the proof.
\end{proof}
\section*{Acknowledgements}
The authors thank the DFG SPP 2298 for supporting their research. Both authors have been additionally supported by Germany’s Excellence Strategy – The Berlin Mathematics Research Center MATH+ (EXC-2046/1, project ID:
390685689), in the case of D.C. via the Berlin Mathematical School. Furthermore, M.E. thanks the DFG CRC 1114 for support.
\bibliographystyle{abbrv}
\bibliography{biblio}

\end{document}